\documentclass[]{amsart}

\usepackage{bbding}
\usepackage[dvips]{graphicx}
\usepackage{amsfonts,amssymb,amsmath,amsthm,amscd}
\allowdisplaybreaks
\usepackage{caption}
\usepackage{subcaption}
\usepackage{mathrsfs}
\usepackage{xcolor}
\usepackage{empheq}
\usepackage{adforn}
\usepackage{cancel}
\usepackage{xr}
\usepackage{mdframed}
\usepackage{tikz-cd}
\usepackage{tikz}
\usepackage{amsfonts}
\usepackage{mathdots}
\usepackage{pinlabel}

\usepackage{enumerate}
\usepackage{lmodern}
\usepackage{mdframed}
\usepackage{stmaryrd}
\usepackage{blindtext}
\usepackage{leftidx}
\usepackage{easytable}
\usepackage{enumitem}
\usepackage{marginnote} % for better margin notes
\usepackage{easyReview}

\usepackage{tgpagella}
\usepackage[T1]{fontenc}

\usepackage[marginparwidth=2.5cm]{geometry}
\definecolor{navie}{RGB}{0, 70, 140}

\usepackage[backref=page]{hyperref}
\usepackage{xcolor}
\hypersetup{
	colorlinks,
	linkcolor={red!60!black},
	citecolor={navie},
	urlcolor={navie}
}
\PassOptionsToPackage{linktocpage}{hyperref}

\makeatletter
\newtheorem*{rep@theorem}{\rep@title}
\newcommand{\newreptheorem}[2]{%
	\newenvironment{rep#1}[1]{%
		\def\rep@title{#2 \ref{##1}}%
		\begin{rep@theorem}}%
		{\end{rep@theorem}}}
\makeatother

\makeatletter
\newtheorem*{rep@cor}{\rep@title}
\newcommand{\newrepcor}[2]{%
	\newenvironment{rep#1}[1]{%
		\def\rep@title{#2 \ref{##1}}%
		\begin{rep@cor}}%
		{\end{rep@cor}}}
\makeatother

\makeatletter
\newtheorem*{rep@prop}{\rep@title}
\newcommand{\newrepprop}[2]{%
	\newenvironment{rep#1}[1]{%
		\def\rep@title{#2 \ref{##1}}%
		\begin{rep@prop}}%
		{\end{rep@prop}}}
\makeatother
\newtheorem{corollary}{Corollary}[section]
\newtheorem{corx}{Corollary}

\newtheorem{thmx}[corx]{Theorem}

\newtheorem{proposition}[corollary]{Proposition}

\newrepcor{cor}{Corollary}
\newreptheorem{theorem}{Theorem}
\newrepprop{prop}{Proposition}

\newtheorem*{theorem*}{Theorem}
\newtheorem*{proposition*}{Proposition}
\newtheorem{lemma}[corollary]{Lemma}

\theoremstyle{definition}

\newtheorem*{remark*}{Remark}
\newtheorem{definition}[corollary]{Definition}
\newtheorem*{definition*}{Definition}
 
\newtheorem{remark}[corollary]{Remark}

\theoremstyle{remark} 
\numberwithin{equation}{section}
\numberwithin{figure}{section}

\newcommand{\D}{\mathsf{D}}

\newcommand{\diag}{\mathrm{diag}}

\newcommand{\R}{\mathbb{R}}

\newcommand{\dif}{\mathsf{d}}

\newcommand{\pa}{\partial}

\newcommand{\dx}{{\dif x}}

\newcommand{\dist}{{\mathrm{dist}}}
\newcommand{\diam}{{\mathrm{diam}}}
\newcommand{\Aff}{{\mathrm{Aff}}}

\newcommand{\transp}[1]{#1^{\raisebox{\depth}{$\mskip-1mu\scriptstyle\intercal$}}}

\begin{document}

\title{(In)finiteness of slopes for Monge-Amp\`ere equations at flat boundary pieces} 

\author{Xin Nie}
\address{
	Shing-Tung Yau Center and School of Mathematics, Southeast University, Nanjing 210096, China}
\email{nie.hsin@gmail.com}
\subjclass[2020]{Primary 35J96; Secondary 35B33, 35B65, 35J70, 35J75}
\thanks{The author is supported by the NSFC grant 12471071.}

\begin{abstract}	
	On any bounded convex domain $\Omega\subset\R^n$ with a $d$-dimensional flat boundary piece $F\subset\pa\Omega$, we consider convex solutions with zero boundary values to the Monge-Amp\`ere equations
	$$
	\det\D^2 u= \dist(\,\cdot\,,\Aff(F))^\alpha,\quad\det\D^2 u=\dist(\,\cdot\,,\pa\Omega)^\alpha,\quad \det\D^2 u=(-u)^\alpha,
	$$
where $\dist$ stands for Euclidean distance and $\Aff(F)$ is the affine hull of $F$. We show that the threshold of the exponent $\alpha\in\R$ for $u$ to have infinite slope at $F$ is $\alpha\leq 2d-n$. 
For the first two equations, we also show that no solution exists when $\alpha\leq 2d-2n$. Moreover, when $\Omega$ satisfies the ``exterior cone condition with a $d$-dimensional ridge'' at $F$, both thresholds are sharp for the first equation, and the infinite-slope threshold is sharp for the last equation.
\end{abstract}
%\setcounter{tocdepth}{1}
%\tableofcontents
\maketitle

\section{Introduction}\label{sec_1}
\subsection{A motivating question}
Throughout this paper, we fix integers $n\geq 2$ and $0\leq d\leq n-1$. By a \emph{$d$-dimensional flat boundary piece} of a bounded convex domain $\Omega\subset\R^n$, we mean the intersection 
$$
F=\pa\Omega\cap L
$$ 
of the boundary $\pa\Omega$ with a $d$-dimensional affine subspace $L\subset\R^n$ such that the relative interior
$$
F^\circ:=\text{ the interior of $F$ as a subset of $L$}
$$
is nonempty. Note that $L$ equals the affine hull $\Aff(F)$ (the smallest affine subspace of $\R^n$ containing $F$) and that a $0$-dimensional flat boundary piece is just any point of $\pa\Omega$. In this paper, we study Alexandrov convex solutions of Monge-Amp\`ere equations on $\Omega$ near flat boundary pieces.

The $d$-dimensional faces of a convex polytope are examples of $d$-dimensional flat boundary pieces. This paper is motivated by the following question raised in \cite[\S 6]{Nie-Seppi_2}:
\begin{quote}
\textbf{Question.} Let $P$ be the interior of a bounded convex polytope $\overline{P}\subset\R^n$ and $u\in C(\overline{P})$ be a nonzero convex function satisfying 
$$
	\det \D^2 u=(-u)^\alpha\  \text{ in $P$},\quad	u|_{\pa P}=0
$$
in the Alexandrov sense, where $\alpha\in\R$. At which points of $\pa P$ does $u$ have infinite slope? 
\end{quote}
Here, a convex function $f\in C(\overline{\Omega})$ on a convex domain $\Omega$ is said to \emph{have finite slope} at a boundary point $x_0\in\pa\Omega$ if it admits a subgradient at $x_0$, namely $f$ is bounded from below by an affine function whose value at $x_0$ coincides with $f(x_0)$. If $f|_F=0$ for a flat boundary piece $F\subset\pa\Omega$, then we have the equivalence
$$
\text{ $f$ has finite slope at some $x_0\in F^\circ$ } \ \Longleftrightarrow\ \text{ $f$ has finite slope at every $x_0\in F$}
$$
(see \S \ref{subsec_MA}). We will refer to this situation simply as ``\emph{$f$ has finite slope at $F$}''. The results of this paper imply:
\begin{quote}
	\textbf{Answer.} $u$ has finite slope exactly at the faces of $P$ of dimension less than $\frac{n+\alpha}{2}$.
\end{quote}

\begin{remark}[Case $\alpha=0$]\label{rk_alpha0}
The case $\alpha=0$ is closely related to a result of Caffarelli \cite{Caffarelli_1993}, stating that a convex subsolution to the equation $\det\D^2u=1$ on a domain in $\R^n$ cannot vanish on an affine subspace of dimension $n/2$ or higher (see also \cite{CaffarelliYuan2022,Mooney_2015,MooneyRakshit2023}). Indeed, the above answer implies this result by the comparison principle, whereas the shorter proof of the result by Mooney \cite{Mooney_2015} can be adapted to give one direction of the answer for $\alpha=0$ (the other, easier direction is provided by a Pogorelov-type barrier; see below). Nevertheless, even in this case, our proof of the former direction is different from Mooney's, thus giving yet another proof of Caffarelli's result. 
\end{remark}

\begin{remark}[Motivation]\label{rk_motivation}
For a solution $u$ to a Monge-Amp\`ere equation on a convex domain, ``having finite slope'' (at a boundary point) is essentially a local Lipschitz property and follows on many occasions from the vast existing work on boundary/global estimates for Monge-Amp\`ere equations 
(see e.g.\ \cite{Le_2, Le_3, Le_4, Le-Savin, Mohammed, Savin-Zhang}), whereas ``having infinite slope'' is a less studied aspect. Our motivation for studying the latter comes from affine differential geometry, where ``having infinite slope everywhere on $\pa\Omega$'' (equivalent to $\lim_{x\to\pa\Omega}\|\D u(x)\|\to+\infty$) corresponds to the \emph{Euclidean completeness} of the Legendre dual hypersurface in $\R^{n+1}$. For example, \emph{hyperbolic affine spheres} and their generalization, convex hypersurfaces of \emph{constant affine Gauss-Kronecker curvature}, are given respectively by convex functions $w,u\in C(\overline{\Omega})\cap C^\infty(\Omega)$ satisfying 
$$
\det \D^2 w=(-w)^{-n-2}\  \text{ in $\Omega$},\quad	w|_{\pa\Omega}=0,
$$
$$
\det \D^2 u=(-w)^{-n-2}\  \text{ in $\Omega$, with $w$ as above};
$$
the infiniteness of slopes has been established for $w$ in \cite{Cheng-Yau}, for $u$ in \cite{Li-Simon-Chen} under smoothness assumptions, and generalized in \cite{Li-Simon-Zhao,  Nie-Seppi_1, Nie-Seppi_3,Nie-Seppi_2, Wu-Zhao, Xiong-Yang}.
\end{remark}

\subsection{Statement of results}
More generally, for any bounded convex domain $\Omega\subset\R^n$ with a flat boundary piece $F\subset\pa\Omega$, consider the three Monge-Amp\`ere equations
\begin{equation}\label{eqn_introequations}
\det\D^2u=\dist(\,\cdot\,,\Aff(F))^\alpha,\quad \det\D^2u=\dist(\,\cdot\,,\pa\Omega)^\alpha,\quad \det\D^2u=(-u)^\alpha,
\end{equation}
where $\dist$ stands for the Euclidean distance and the exponent $\alpha\in\R$ can take both signs. We investigate whether a convex sub- or supersolution has finite slope at $F$. 
%Clearly, the more negative $\alpha$ is, the more likely the slope is to be infinite. Moreover, when $\alpha$ is too negative 
The main results can be summarized as:
\begin{itemize}
	\item as $\alpha$ reaches or falls below $2d-n$, the slope becomes infinite; 
	\item for the first two equations, as $\alpha$ reaches or falls below $2d-2n$, subsolutions no longer exist.
\end{itemize}
The following theorem is a precise statement in one direction. We emphasize that in this paper, every equation or inequality with left-hand side $\det\D^2u$ is understood in the Alexandrov sense.
\begin{thmx}\label{thm_A}
Let $\Omega\subset\R^n$ be a bounded convex domain, $F\subset\pa\Omega$ be a $d$-dimensional flat boundary piece, and $L:=\Aff(F)$ be the affine hull of $F$. 
\begin{enumerate}[label=(\arabic*)]
\item\label{item_thmA1}
Let $\alpha\leq 2d-n$, and let $u\in C(\overline{\Omega})$ be a convex function vanishing on $F$ and satisfying any of the conditions
\begin{itemize}
	\vspace{3pt}
	\item $\det\D^2u\geq M\,\dist(\,\cdot\,,L)^\alpha\ $ in $\Omega$,
	\vspace{2pt}
	\item $\det\D^2u\geq M\,\dist(\,\cdot\,,\pa\Omega)^\alpha\ $ in $\Omega$,
	\vspace{2pt}	
	\item $u<0$ and $\det\D^2u\geq M\,(-u)^\alpha\ $ in $\Omega$,
	\vspace{3pt}	
\end{itemize}
%$$
%\det\D^2u\geq M\,\dist(\,\cdot\,,L)^\alpha,\quad  	\det\D^2u\geq M\,\dist(\,\cdot\,,\pa\Omega)^\alpha,\ \text{ or } \det\D^2u\geq M\,(-u)^\alpha
%$$
where $M>0$. Then $u$ has infinite slope at $F$.
\item\label{item_thmA2}
Let $\alpha\leq 2d-2n$. Then there does not exist any convex function $u\in C(\overline{\Omega})$ satisfying 
$$
\det \D^2u\geq M\,\dist(\,\cdot\,,L)^\alpha\ \text{ in }\Omega
$$
with $M>0$.
\end{enumerate}
\end{thmx}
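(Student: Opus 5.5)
The plan is to argue by contradiction, reduce all three conditions in part \ref{item_thmA1} to the first one near a point of $F^\circ$, and then compare Monge--Amp\`ere mass with the volume of the gradient image on thin slabs along $F$.

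\textbf{Step 1: reduction to the first condition.} Since $d\le n-1$, we have $\alpha\le 2d-n<0$ in part \ref{item_thmA1}. Fix $x_0\in F^\circ$ and a small neighbourhood $U$ of $x_0$ whose points project orthogonally onto $L$ inside $F$. For $x\in U\cap\Omega$ this gives $\dist(x,\pa\Omega)\le\dist(x,F)=\dist(x,L)$, hence $\dist(x,\pa\Omega)^\alpha\ge\dist(x,L)^\alpha$.

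For the third condition, suppose $u$ has finite slope at $F$. Then $u\ge\ell$ for an affine $\ell$ with $\ell(x_0)=0$. Since $u=0$ on $F$, we get $\ell\le0$ on $F$, and $\ell$ vanishes at the relative interior point $x_0$, so $\ell\equiv0$ on $L$. Thus $-u\le-\ell\le C\dist(\cdot,L)$, and $(-u)^\alpha\ge C^\alpha\dist(\cdot,L)^\alpha$.

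So in every case, assuming finite slope, we obtain on $U\cap\Omega$:
\begin{itemize}
\item an affine $\ell$ vanishing on $L$;
\item $v:=u-\ell\ge0$ with $v=0$ on $F$;
\item $\det\D^2v\ge M'\dist(\cdot,L)^\alpha$.
\end{itemize}

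\textbf{Step 2: mass versus gradient image.} Write $x=(y,z)\in\R^d\times\R^{n-d}$ with $L=\{z=0\}$ and $x_0=0$. Near $x_0$, $\Omega$ contains a set $\{|y|<r\}\times C$, where $C$ is an open convex cone in $\R^{n-d}$. The reason is that the $z$-slice of $\Omega$ through a point of $F^\circ$ is an $(n-d)$-dimensional convex set with $0$ on its boundary. Consider the shells
\[
A_t=\{|y|<r,\ z\in C,\ t/2<|z|<t\}.
\]
Their Monge--Amp\`ere mass is at least $c\,r^d t^{\alpha+n-d}$.

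For the gradient image, take $p=(p_y,p_z)\in\pa v(x)$ with $x\in A_t$. Testing the supporting inequality against the points $(y',0)\in F$, where $v=0$, gives $p_y\cdot(y'-y)-p_z\cdot z\le -v(x)\le0$, hence $|p_y|\lesssim |p_z|\,t/r$. Testing along an inward $z$-direction gives an upper bound on $p_z$:
\begin{itemize}
\item bounded by a constant $K$ under finite slope, using $v\ge0$ and the one-sided derivative toward an interior point at fixed distance;
\item bounded by $C/t$ in general, for part \ref{item_thmA2}, by the oscillation of $u$ over a segment of length $\sim t$ toward the interior.
\end{itemize}
Hence $|\pa v(A_t)|\lesssim (t/r)^dK^n$ in the first case, and $|\pa u(A_t)|\lesssim t^{-(n-d)}r^{-d}$ in the second.

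Comparing with the mass bound:
\begin{itemize}
\item $r^dt^{\alpha+n-d}\lesssim t^d$ is impossible as $t\to0$ when $\alpha<2d-n$;
\item $t^{\alpha+n-d}\lesssim t^{-(n-d)}$ is impossible when $\alpha<2d-2n$.
\end{itemize}
This is already a contradiction in the strict cases. For part \ref{item_thmA2}, the same estimate on a fixed slab near $F$ applies directly to any $u$, with no reduction needed.

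\textbf{Step 3: the borderline exponents (main obstacle).} When $\alpha=2d-n$ (respectively $2d-2n$), the powers of $t$ cancel, so the single-shell estimate is not contradictory. The plan is to sum over dyadic shells $t=2^{-k}$, $k\le N$, and show that the gradient images of different shells are essentially disjoint, in a way that forces a logarithmic divergence.

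For part \ref{item_thmA2}, the images sit in distinct dyadic ranges of $|p_z|\sim 2^k$. Total mass $\gtrsim N$ against a total gradient volume that should be $O(1)$ after a more careful shell-by-shell count. I expect this to work but to require the finer count in place of the crude bound above.

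For part \ref{item_thmA1}, the analogous mechanism is to use $K$ small rather than merely bounded. After replacing $\ell$ by the maximal affine minorant vanishing on $L$, the inward $z$-derivative of $v$ at $F^\circ$ is $0$ in a direction-dependent sense, so $K=K(t)\to0$ as $t\to0$. Then $t^{\alpha+n-d}r^d\lesssim t^dK(t)^n$ gives the contradiction at equality as well. Making the decay $K(t)\to0$ uniform over the shell $A_t$ is the delicate point; I would first try to obtain it from convexity of $v$ along rays from $F^\circ$ together with $v\ge0$. Failing that, I would use a blow-up along anisotropic rescalings $(y,z)\mapsto(y,tz)$, under which the equation is invariant exactly at $\alpha=2d-n$.
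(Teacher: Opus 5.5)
\textbf{There is a genuine gap: the borderline exponents.} Your shell estimates in Step 2 settle the strict cases $\alpha<2d-n$ and $\alpha<2d-2n$. The borderline exponents are the real content of the theorem, and Step 3 does not prove them.

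For part (2), the dyadic summation gives nothing with your bounds. At $\alpha=2d-2n$ the shell at scale $t=2^{-k}$ carries mass $\gtrsim 2^{k(n-d)}$, and these essentially disjoint gradient images all fit in the box $\{|p_y|\lesssim 1,\ |p_z|\lesssim 2^N\}$ of volume $\sim 2^{N(n-d)}$. So no logarithmic excess appears. The missing observation is small. Since $u\in C(\overline{\Omega})$ is uniformly continuous, the oscillation of $u$ over your segments of length $\sim t$ is $o(1)$, not $O(1)$. This gives $|p_z|=o(1/t)$, hence $\mathcal{M}u(\sigma_t(K))=o(t^{d-n})$, which contradicts the lower bound $\gtrsim t^{\alpha+n-d}$ directly, with no summation. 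This is exactly Lemma \ref{lemma_super}.

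For part (1), your mechanism $K(t)\to0$ is false when $n-d\geq 2$. The function $v(y,z)=|z|$ is convex, nonnegative, vanishes on $L$ and has finite slope. Yet its $z$-subgradients $z/|z|$ on every shell $A_t$ fill the same open subset of the unit sphere, and subtracting any $q\cdot z$ only translates this set. Nor is there a ``maximal'' affine minorant, since the admissible $q$ form a convex set. So convexity along rays plus $v\geq 0$ cannot give the decay.

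What tends to zero is the \emph{volume} of the gradient image, not its size. The paper shows that $u_\lambda(y,z)=\lambda u(y,\lambda^{-1}z)$ converges locally uniformly to a function $h(z)$ that is independent of $y$ and $1$-homogeneous. Such a function has vanishing Monge--Amp\`ere measure (Remark \ref{rk_homogeneity}), whereas $\det\D^2u_\lambda\geq M\lambda^{2d-n-\alpha}|z|^\alpha\geq M|z|^\alpha$. You list this rescaling only as a fallback, without these two facts, which are exactly what make it work.

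Within your own framework there is a partial fix for $d\geq1$. Show that $p_z\cdot z-v(x)=o(t)$ uniformly on $A_t$, using that $v(y,se)/s$ decreases as $s\downarrow 0$ together with Dini's theorem. This gives $|p_y|=o(t)$ and hence $|\pa v(A_t)|=o(t^d)$. For $d=0$ you still need the homogeneity argument.

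\textbf{A secondary, repairable error in Step 1.} The claim $2d-n<0$ holds only when $d<n/2$; for $n=3$, $d=2$ one has $2d-n=1$. So for $0<\alpha\leq 2d-n$ your inequalities $\dist(\cdot,\pa\Omega)^\alpha\geq\dist(\cdot,L)^\alpha$ and $(-u)^\alpha\geq C^\alpha\dist(\cdot,L)^\alpha$ point the wrong way. This is fixable because your shells lie in a region $\{|y|<r\}\times(C'\cap B^{n-d}_\epsilon)$ with $C'\setminus\{0\}$ inside the interior of the slice cone. There one has $\dist(\cdot,\pa\Omega)\geq c|z|$, and $u<0$ together with convexity along rays from $L$ gives $-u\geq c|z|$. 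With these two-sided bounds, all three conditions reduce to $\det\D^2u\gtrsim|z|^\alpha$ on the shells for either sign of $\alpha$, as in the paper.
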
 
The $\alpha=0$ case of part \ref{item_thmA1} implies the result of Caffarelli discussed in Remark \ref{rk_alpha0}.

\begin{remark}[Contrasts between equations]
Although part \ref{item_thmA2} is stated only for the first equation in \eqref{eqn_introequations}, the same conclusion for the second equation follows  because $\dist(\,\cdot\,,\pa\Omega)\leq \dist(\,\cdot\,,L)$ and $\alpha\leq 2d-2n<0$. In contrast, in part \ref{item_thmA1}, $\alpha$ does not have a definite sign, hence there is no direct implication between the statements for different equations. Also note that the third equation does not suffer from existence issues: it is known to admit a convex solution whenever $\alpha\neq n$, while the case $\alpha=n$ amounts to the Monge-Amp\`ere eigenvalue problem  (see e.g.\ \cite{Gu, Le_2} for an overview). Moreover, the solution is known to be unique for $\alpha<n$, while a recent work \cite{Zhou} shows the same for $\alpha>n$.
%Also note that, while part (2) does not require any boundary value assumption beyond continuity, for (1) we need to assume $u|_{F}=0$ so that the rapid growing Monge-Amp\`ere measure near $F$ only contributes to the convexity of $u$ in directions transverse to $F$, rather than along $F$.
\end{remark}

\begin{remark}[Contrast between $\alpha<2d-n$ and $\alpha=2d-n$]\label{rk_critical}
In the easier case $\alpha<2d-n$,  Theorem \ref{thm_A} \ref{item_thmA1} can be proved alternatively via barriers, which give the following more refined estimate for the first two (resp.\ the last) equation in \eqref{eqn_introequations}:
\begin{equation}\label{eqn_introholder}
-u \gtrsim \dist(\,\cdot\,,L)^\frac{\alpha+2n-2d}{n}\quad\Big(\text{resp. }-u \gtrsim \dist(\,\cdot\,,L)^\frac{2n-2d}{n-\alpha}\Big)\ \ \text{ in }\Gamma \text{ near }F,
\end{equation}
where $\Gamma\subset\R^n$ is any cone with apex on $F$ and contained in $\Omega$ near the apex. When $\alpha=2d-n$, so that the exponents in \eqref{eqn_introholder} are $1$, these estimates still hold but fail to be sharp and do not imply the infiniteness of slopes. We expect a sharp estimate with a logarithmic factor to do so, as already established for $d=n-1$ by Le \cite{Le_4} and Gu \cite{Gu} (see \S \ref{subsec_related} below).
%\ref{sec_open}
\end{remark}
\begin{figure}[ht]
	\centering
	\includegraphics[width=0.89\textwidth]{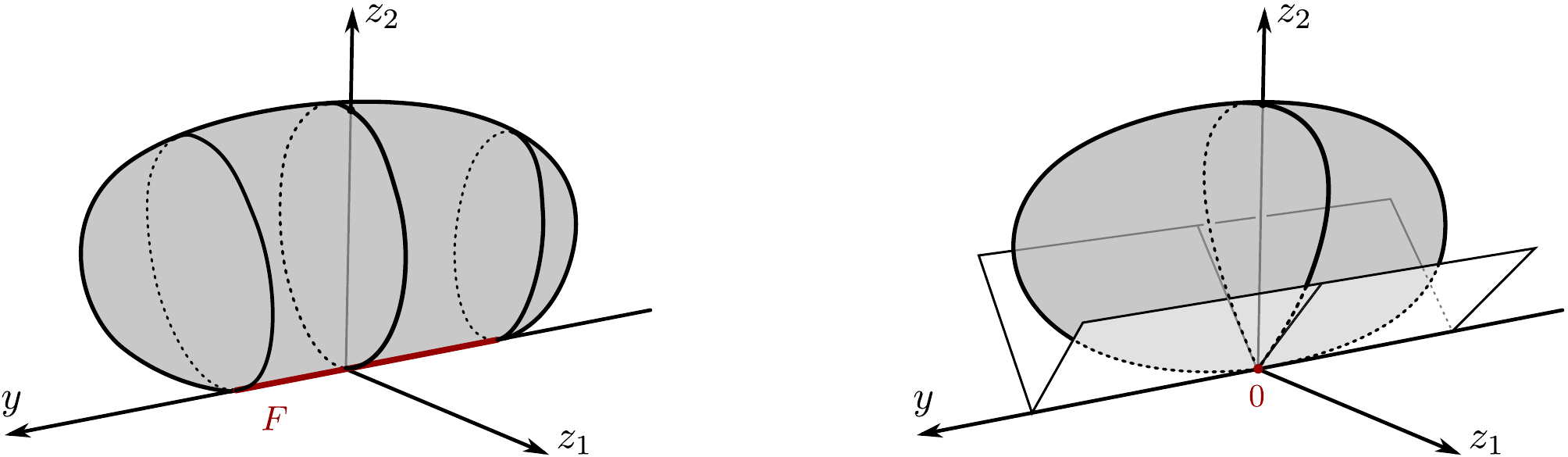}
	\caption{Convex domains considered in Theorem \ref{thm_A} (left) and Theorem \ref{thm_B} (right), for $n=3$ and $d=1$. The figure also shows the coordinate system $x=(y,z)$ that we use in the proofs, where $y\in\R^d$, $z\in\R^{n-d}$, $L=\{z=0\}$, and $x_0=(0,0)$.}
	\label{fig_domains}
\end{figure}

In the reverse direction, we want to show that the thresholds $2d-n$ and $2d-2n$ are sharp, but it is clearly not the case under the same assumptions. For example, any flat boundary piece $F$ of dimension $d>0$ contains infinitely many flat boundary pieces $F'$ of dimension $d'<d$, but applying Theorem \ref{thm_A} to $F'$ leads to non-sharp conclusions. 
For the reverse controls to hold, we shall consider, rather than a $d$-dimensional flat boundary piece, a boundary point $x_0\in\pa\Omega$ with the property that $\Omega$ is contained in a convex cone whose \emph{ridge} has dimension $d$ and contains $x_0$ (see \S \ref{subsec_coordinates}). Another subtlety is that our proof of the reverse control (explicit barriers) does not work for the second equation in \eqref{eqn_introequations}. Thus, we have the following partial converse to Theorem \ref{thm_A}:
\begin{thmx}\label{thm_B}
Let $\Omega\subset\R^n$ be a bounded convex domain contained in a convex cone $\Gamma\subset\R^n$ such that the ridge $L$ of $\Gamma$ has dimension $d$ and contains some $x_0\in\pa\Omega$.
\begin{enumerate}[label=(\arabic*)]
	\item\label{item_thmB1}
Let  $\alpha> 2d-n$, and let $u\in C(\overline{\Omega})$ be a convex function vanishing on $\pa\Omega$ and satisfying either of the conditions
\begin{itemize}
	\vspace{3pt}
	\item $\det\D^2u\leq M\,\dist(\,\cdot\,,L)^\alpha\ $ in $\Omega$, 
	\vspace{2pt}	
	\item $u<0$ and $\det\D^2u\leq M\,(-u)^\alpha\ $ in $\Omega$,
	\vspace{3pt}	
\end{itemize}
where $M>0$. Then $u$ has finite slope at $x_0$. 
%Moreover, if we further assume $\alpha>-2$, the same conclusion holds under the condition 
%$$
%\det\D^2u\leq M\,\dist(\,\cdot\,,\pa\Omega)^\alpha\ \text{ in } \Omega.
%$$
	\item\label{item_thmB2}
	 Let $\alpha>2d-2n$, and let $g:\Omega\to\R_{\geq0}$ be a nonnegative measurable function satisfying
$$
	 g(x)\leq M\,\dist(x,L)^\alpha\ \ \text{ for all }x\in\Omega
$$
	 with $M>0$. Then there exists a unique convex solution $u\in C(\overline{\Omega})$ to the Dirichlet problem 
	\begin{equation}\label{eqn_thmB2}
	\det\D^2u=g\  \text{ in }\Omega,\quad u|_{\pa\Omega}=0.
	\end{equation}
\end{enumerate}
\end{thmx}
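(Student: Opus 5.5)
The plan is to build explicit barriers adapted to the cone $\Gamma$ and compare them with $u$ via the comparison principle. Coordinates are $x=(y,z)$ with $y\in\R^d$, $z\in\R^{n-d}$, $L=\{z=0\}$ and $x_0=0$. Since $L$ is the ridge of $\Gamma$, we can write $\Gamma=\R^d\times C$, where $C\subset\R^{n-d}$ is a pointed convex cone with apex $0$. After enlarging $C$ we may assume it is a round cone $\{z:\ \langle z,e\rangle> \cos\theta\,|z|\}$ with $\theta<\pi/2$. The comparison functions are of the form
$$
w(y,z)=A\,|y|^2 - B\,h(z)+\text{affine},
$$
where $h$ is a positive concave function on $C$, homogeneous of degree $\beta$, vanishing on $\pa C$. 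Then $-h$ is convex, and $\det\D^2 w$ factors as $(2A)^d\det\D^2(-Bh)$. For $h$ I would take something like $h(z)=\langle z,e\rangle^{\beta}-\lambda|z|^\beta$, or more robustly $(\langle z,e\rangle^2-\cos^2\theta|z|^2)^{\beta/2}$. The latter is a power of a quadratic form of Lorentzian signature whose positive cone is $C$, hence concave for suitable $\beta\le 1$. Its Monge--Amp\`ere measure is homogeneous of degree $(\beta-2)(n-d)$ in $z$. Estimating it by a multiple of $|z|^{(\beta-2)(n-d)}\approx\dist(\cdot,L)^{(\beta-2)(n-d)}$ is the key computation, and it needs care near $\pa C$. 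The condition $\beta<1$ makes the barrier infinitely steep, while $\beta=1$ gives finite slope.

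\textbf{Part (2).} Set $\alpha=(\beta-2)(n-d)$. The condition $\alpha>2d-2n$ is exactly $\beta>0$, so a nontrivial $\beta\in(0,1]$ exists. I would construct a subsolution $\underline u=A(|y|^2-R^2)-B\,h(z)$, with $B$ large, so that $\det\D^2\underline u\ge M\dist^\alpha\ge g$ and $\underline u\le 0$ on $\pa\Omega$; for this, rather than just comparing radially, I would use that $\det\D^2$ of $h$ is bounded below by $c|z|^{\alpha}$ uniformly on $C$. Solvability then follows from the standard Perron method / existence theorem for Alexandrov solutions with bounded subsolution: approximate $g$ by truncations $g_k=\min(g,k)\mathbf 1_{\Omega_k}$, solve on $\Omega$ with zero data, get $\underline u\le u_k\le 0$ by comparison, and pass to the limit by compactness of convex functions and weak continuity of Monge--Amp\`ere measures. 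Continuity up to $\pa\Omega$ comes from the barrier at points of $\pa\Omega$; at points away from $L$, $g$ is bounded and the usual convex-domain barrier works. Uniqueness is the comparison principle.

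\textbf{Part (1).} The condition $\alpha>2d-n$ lets us choose $\beta=1$ with room to spare: a degree-one homogeneous $h$ gives $\det\D^2(-h)=0$. So instead I would take $\underline u=-B\langle z,e\rangle+A(|y|^2-R^2)\cdot(\text{small})$ combined with a perturbation term $-\varepsilon|z|^{\gamma}\langle z,e\rangle^{1-\gamma}$-type, producing $\det$ of order $|z|^{-(n-d-1)}\cdot|z|^{-1}\cdot(\ldots)$. More cleanly, I would use $w=-B\,\langle z,e\rangle\,(1-\delta\phi)$ with $\phi$ chosen so that $\det\D^2 w\gtrsim|z|^{-(n-d)}$, then check that $|z|^{-(n-d)}\cdot|y|$-terms can absorb $\dist^\alpha$ precisely when $\alpha>2d-n$. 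Here the $d$ flat directions contribute a factor that must be balanced; this balancing of exponents is where $2d-n$ appears, and I expect it to be the main obstacle. The resulting $w$ is Lipschitz at $0$, satisfies $w\le u$ on $\pa\Omega$, and has $\det\D^2w\ge\det\D^2u$. For the $(-u)^\alpha$ equation with $\alpha\ge0$, first bound $-u$ by a constant; for $\alpha<0$, use $-u\ge -w$ iteratively, or use Theorem~A's Hölder-type lower bound. Comparison then gives $u\ge w$, hence a subgradient at $x_0$.
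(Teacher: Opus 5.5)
\textbf{The main gap is the separable shape of your barriers.} For $w(y,z)=a(y)+b(z)$ (your $A|y|^2-Bh(z)+\text{affine}$ has this form) one has $\pa w(y,z)=\pa a(y)\times\pa b(z)$, so $\mathcal{M}w=\mathcal{M}a\otimes\mathcal{M}b$.

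Take the central case where $\pa\Omega\cap L$ contains a $d$-dimensional flat piece $F$ with $x_0=(y_0,0)\in F^\circ$, as in Corollary \ref{cor_C}. Comparison forces $w\le 0$ on $\pa\Omega\supset F$. To say anything about $u$ at $x_0$ (finite slope in part (1), $\liminf_{x\to x_0}u(x)=0$ in part (2)) you also need $w(x_0)=0$. Then $y\mapsto a(y)+b(0)$ is convex near $y_0$ with an interior maximum at $y_0$, hence constant there. So $\mathcal{M}w$ vanishes on a nonempty open subset of $\Omega$, contradicting $\det\D^2w\ge M\dist(\cdot,L)^\alpha$.

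In part (2) this means your subsolution, which equals $A(|y|^2-R^2)<0$ on $L$, gives a bounded monotone limit but no control of $u$ at $\pa\Omega\cap L$. That control is the whole difficulty: for $\alpha\le 2d-2n$ no solution exists, by Theorem \ref{thm_A} \ref{item_thmA2}. Your lower bound $\det\D^2(-h)\gtrsim|z|^{(\beta-2)(n-d)}$ for the Lorentzian $h$ is correct, and it even reproduces the threshold $2d-2n$ via $\beta>0$; the problem is only where the barrier vanishes.

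In part (1) you leave the barrier unspecified, and your target $\det\D^2w\gtrsim|z|^{-(n-d)}$ is self-defeating. A convex $w$ vanishing on $F$ with that bound has infinite slope at $F$ by Theorem \ref{thm_A} \ref{item_thmA1}, since $-(n-d)\le 2d-n$.

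The missing idea is the product ansatz $f=(|y|^2\pm\Lambda)\Phi(z)$ with $\Phi(0)=0$. It vanishes on all of $L$, while its $y$-block $2\Phi(z)I_d$ stays positive and contributes an extra factor $\Phi^d$ to the determinant.
\begin{itemize}
\item For part (1), take $\Phi=|z|^\beta$ with $\beta=\frac{\alpha+2n-2d}{n}$. A Schur-complement computation gives $\det\D^2f\asymp|z|^{n\beta-2(n-d)}=|z|^\alpha$ on a slab $|y|<R$ for large $\Lambda$. The condition $\beta>1$, i.e.\ $\alpha>2d-n$, is exactly what makes $(|y|^2+\Lambda)|z|^\beta$ convex there and $\lesssim|z|$ on $\Omega$. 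Hence $f-C\,\xi\cdot z$ is a lower barrier with finite slope at $x_0$.
\item For part (2), take $\Phi=(z_1\cdots z_{n-d})^{\beta/(n-d)}$ with $0<\beta<1$ and the factor $|y|^2-\Lambda$. This gives a subsolution $v$ vanishing on $L$. Then $\{u_k\le-\epsilon\}\subset\{v\le-\epsilon\}$ stays away from $L$, so $\mathcal{M}u_k(\{u_k\le-\epsilon\})$ is bounded uniformly in $k$. The Alexandrov maximum principle then yields $u|_{\pa\Omega}=0$.
\end{itemize}
Weighting the $y$-directions by $\Phi(z)$ is precisely the exponent balancing you flagged as the obstacle.

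\textbf{Your treatment of $\det\D^2u\le M(-u)^\alpha$ also fails.}
\begin{itemize}
\item For $\alpha\ge0$, bounding $(-u)^\alpha$ by a constant reduces to exponent $0$, which is above the threshold $2d-n$ only when $d<n/2$. Otherwise you would need an a priori bound $-u\lesssim|z|^\theta$ with $\alpha\theta>2d-n$, which is essentially the conclusion.
\item For $\alpha<0$ you would need $-u\gtrsim|z|^\theta$. This is false near $\pa\Omega\setminus L$, where $u=0$ but $|z|>0$, so $(-u)^\alpha$ cannot be dominated by a power of $\dist(\cdot,L)$. The H\"older lower bounds associated with Theorem \ref{thm_A} concern subsolutions, not $u$.
\end{itemize}
The paper avoids any such reduction and compares $u$ directly with a rescaled barrier via Le's subcritical comparison principle (Lemma \ref{lemma_Lecomparison}). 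Since the hypothesis strengthens as $\alpha$ increases, one may assume $\alpha<n$. Choosing $C$ large in $v=f-C\,\xi\cdot z$ gives $-v\asymp|z|$, hence $\det\D^2v\ge|z|^\alpha\ge c\,(-v)^\alpha$. Then $\widetilde v=\lambda v$ with $\lambda^{n-\alpha}c\ge M$ satisfies $\det\D^2\widetilde v\ge M(-\widetilde v)^\alpha$, and the comparison gives $u\ge\widetilde v$.
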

\begin{remark}[Other existence results]
The classical existence theorem states that a sufficient condition for \eqref{eqn_thmB2} to admit a convex solution is $\int_\Omega g(x)\dif x<+\infty$. A new existence result of Le \cite[Theorem 1.1]{Le_5} implies that the condition can be weakened to 
\begin{equation}\label{eqn_Leexistence}
\int_\Omega g(x)\,\dist(x,\pa\Omega)\dif x<+\infty.
\end{equation}
Theorem \ref{thm_B} \ref{item_thmB2} is not covered by this result. Indeed, if $g(x)=\dist(x,L)^\alpha$ and $\Omega$ is the interior of $\Gamma\cap B_r(x_0)$, where $B_r(x_0)$ is a ball centered at $x_0$, then \eqref{eqn_Leexistence} holds if and only if $\alpha>d-n-1$, which is stronger than our threshold $\alpha>2d-2n$ unless $d=n-1$.
\end{remark}
Figure \ref{fig_domains} illustrates convex domains satisfying the assumptions of
Theorems \ref{thm_A} and \ref{thm_B}, respectively. When they are satisfied at the same time, the theorems imply that both thresholds are sharp for the first and last equations in \eqref{eqn_introequations}: 
\begin{corx}\label{cor_C}
Let $\Omega\subset\R^n$ be a bounded convex domain contained in a convex cone $\Gamma\subset\R^n$ such that the ridge $L$ of $\Gamma$ has dimension $d$ and contains a flat boundary piece $F\subset\pa\Omega$ as a subset with nonempty interior. 
\begin{enumerate}[label=(\arabic*)]
	\item There exists a convex function $u\in C(\overline{\Omega})$ satisfying
	$$
	\det \D^2u\asymp \dist(\,\cdot\,,L)^\alpha\ \text{ in }\Omega, \quad u|_{\pa\Omega}=0
	$$
	if and only if $\alpha>2d-2n$. Such a function $u$ has finite slope at $F$ if and only if $\alpha>2d-n$.
	\item\label{item_corC2}
	Let $u\in C(\overline{\Omega})$ be a nonzero convex function satisfying
$$
\det \D^2u\asymp (-u)^\alpha\ \text{ in }\Omega, \quad u|_{\pa\Omega}=0
$$
for some $\alpha\in\R$. Then $u$ has finite slope at $F$ if and only if $\alpha>2d-n$.
\end{enumerate}
\end{corx}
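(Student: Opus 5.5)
Corollary \ref{cor_C} follows by combining Theorems \ref{thm_A} and \ref{thm_B}. The only work is to check that the hypotheses of both theorems hold simultaneously and to read $\asymp$ as a pair of one-sided inequalities. First note that $F$ is a $d$-dimensional flat boundary piece with $\Aff(F)=L$, because $F\subset L$ has nonempty interior in $L$. Since $\Omega\subset\Gamma$ and $F\subset L\cap\pa\Omega$, every point $x_0\in F^\circ$ is a point of $\pa\Omega$ lying on the ridge $L$ of $\Gamma$. So Theorem \ref{thm_B} applies with such an $x_0$, and Theorem \ref{thm_A} applies to $F$. We write $c\,h\le\det\D^2u\le C\,h$ with constants $0<c\le C$.

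For part (1), existence: if $\alpha>2d-2n$, apply Theorem \ref{thm_B} \ref{item_thmB2} with $g=\dist(\,\cdot\,,L)^\alpha$. This gives a solution with $\det\D^2u=\dist(\cdot,L)^\alpha$ exactly. Conversely, if $\alpha\le 2d-2n$, the lower bound $\det\D^2u\ge c\,\dist(\cdot,L)^\alpha$ contradicts Theorem \ref{thm_A} \ref{item_thmA2}.

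For part (1), slopes: given such a $u$ (so $\alpha>2d-2n$), suppose first $\alpha\le 2d-n$. Since $u$ vanishes on $\pa\Omega\supset F$, the lower bound and the first bullet of Theorem \ref{thm_A} \ref{item_thmA1} give infinite slope at $F$. If instead $\alpha>2d-n$, the upper bound and the first bullet of Theorem \ref{thm_B} \ref{item_thmB1} give finite slope at $x_0\in F^\circ$. By the equivalence recalled in the introduction (\S\ref{subsec_MA}), this means finite slope at $F$.

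For part (2), the one point to check is $u<0$ in $\Omega$. Since $u$ is convex, vanishes on $\pa\Omega$ and is nonzero, we have $u\le 0$, and $u$ is not identically zero. If $u(p)=0$ at an interior point $p$, convexity together with $u\le0$ forces $u\equiv 0$: any $q\in\Omega$ lies on a segment through $p$ with $p$ in its interior, and the values at the two endpoints are $\le 0$ while the value at the interior point $p$ is $0$. Hence $u<0$, so $(-u)^\alpha$ is a positive finite function and the third bullet of Theorem \ref{thm_A} \ref{item_thmA1} and the second bullet of Theorem \ref{thm_B} \ref{item_thmB1} apply. This yields infinite slope for $\alpha\le 2d-n$ and finite slope at $x_0$, hence at $F$, for $\alpha>2d-n$.

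There is no real obstacle, since all the analytic content is in the two theorems. The only points needing care are the strict negativity of $u$ in part (2) and the passage from finite slope at one point $x_0\in F^\circ$ to finite slope at $F$, which is exactly the equivalence stated in the introduction.
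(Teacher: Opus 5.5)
Your proposal is correct and follows the paper's approach: the paper states without further argument that Corollary \ref{cor_C} follows by combining Theorems \ref{thm_A} and \ref{thm_B} when their hypotheses hold together. Your explicit checks fill in details the paper leaves implicit, namely that $\Aff(F)=L$, that $u<0$ in $\Omega$ in part (2) (needed for both theorems), and the passage from finite slope at one $x_0\in F^\circ$ to finite slope at $F$.
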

Part \ref{item_corC2} implies the answer to the question at the beginning of the paper. It would be interesting to investigate whether the same conclusion holds for the equation $\det\D^2u=\dist(\,\cdot\,,\pa\Omega)^\alpha$.
%(see \S \ref{sec_open}).

\subsection{Other related work}\label{subsec_related}
Below, we review some related work in addition to that already mentioned.
%Remarks \ref{rk_alpha0} and \ref{rk_motivation}.
\begin{itemize}[leftmargin=0.6cm]
\item The $n=2$ case of Theorem \ref{thm_A} \ref{item_thmA1} and Theorem \ref{thm_B} \ref{item_thmB1} are contained in \cite[\S 8.2]{Nie-Seppi_1}.
\item 
The works of Le \cite{Le_4} and Gu \cite{Gu} provide quantitative versions of Theorem \ref{thm_A} \ref{item_thmA1} and Theorem \ref{thm_B} \ref{item_thmB1} in the case $d=n-1$. More specifically:
\begin{itemize}[leftmargin=0.4cm]
	\item The cone condition in Theorem \ref{thm_B} is vacuous in this case. 
Meanwhile, since $\dist(\,\cdot\,,\pa\Omega)^\alpha\leq\dist(\,\cdot\,,L)^\alpha$ for $\alpha>n-2\geq 0$, the statement of Theorem \ref{thm_B} \ref{item_thmB1} for the right-hand side $\dist(\,\cdot\,,L)^\alpha$ implies the one for $\dist(\,\cdot\,,\pa\Omega)^\alpha$ in this case.
Therefore, the statement implies that a supersolution with zero boundary values to either of the last two equations in \eqref{eqn_introequations} with $\alpha>n-2$
has finite slope everywhere on $\pa\Omega$. This is a qualitative consequence of the Lipschitz estimates in \cite[Theorem 1.1]{Le_4}.
	\item Theorem \ref{thm_A} \ref{item_thmA1} implies that a subsolution with zero boundary values to either of the last two equations in \eqref{eqn_introequations} with $\alpha\leq n-2$ has infinite slope at any $(n-1)$-dimensional flat boundary piece. As mentioned in Remark \ref{rk_critical}, the more difficult $\alpha=n-2$ part of this statement is a qualitative consequence of the log-Lipschitz estimates in \cite{Gu, Le_4}. For the equation with right-hand side $(-u)^{n-2}$, their estimate is
$$
-u\gtrsim \dist(\,\cdot\,,L)\,|\log \dist(\,\cdot\,,L)|^\frac{n}{2}\ \ \text{ in }\Omega \text{ near } F.
$$
The exponent $\frac{n}{2}$ is replaced by $1$ for the equation with right-hand side $\dist(\,\cdot\,,\pa\Omega)^{n-2}$, and ``$\gtrsim$'' is replaced by ``$\lesssim$'' if we consider supersolutions instead of subsolutions.
\end{itemize}
%When $d=n-1$, the work of Le \cite{Le_4} and Gu \cite{Gu} can be viewed as a quantitative version of Theorem \ref{thm_A} \ref{item_thmA1} and Theorem \ref{thm_B} \ref{item_thmB1}. Note that the cone condition in Theorem \ref{thm_B} is vacuous in this case.
% established a similar estimate with right-hand side $\dist(\,\cdot\,,L)\,|\log \dist(\,\cdot\,,L)|^\frac{n}{2}$.

\item Jian and Wang \cite{Jian-Wang} considered a finer condition on $\Omega$ than the cone condition in Theorem \ref{thm_B}, which consists in replacing the proper factor $\Gamma_0$ of our bounding cone  $\Gamma\cong\R^d\times\Gamma_0$ ($\Gamma_0$ is a proper convex cone in $\R^{n-d}$; see \S \ref{subsec_coordinates}) by a convex set of the form
$$
\Bigg\{z=(z_1,\cdots,z_{n-d})\in\R^{n-d}\,\Bigg|\,z_{n-d}\geq\sum_{i=1}^{n-d-1}C_i|z_i|^{a_i}\Bigg\}
$$
with $a_i\geq1$. The case $a_1=\cdots=a_{n-d-1}=1$ boils down to our condition. Their main result, when specialized to this case and to the equations $\det\D^2u=\dist(\,\cdot\,,\pa\Omega)^\alpha$ and $\det\D^2u=(-u)^\alpha$ with $\alpha<2d-n$, is exactly the reverse estimate to \eqref{eqn_introholder}.  

%can be stated in the following form, which may be compared with ours:
%\begin{itemize}
% \item if $\alpha<2d-n$, then $u\in C^\mu(\overline{\Omega})$ for $\mu:=\frac{2n-2d}{n-\alpha}\in(0,1)$;
% \item if $\alpha\geq 2d-n$, then $u\in C^\lambda(\overline{\Omega})$ for every $\lambda\in(0,1)$.
%\end{itemize}
%Note the difference from our dichotomy: $\alpha\leq 2d-n$ versus $\alpha>2d-n$ (cf.\ Remark \ref{rk_critical}). 

\item
The result of Caffarelli mentioned in Remark \ref{rk_alpha0} is generalized by Jin, Tu and Xiong \cite{JinTuXiong2025SingularSet} to the Monge-Amp\`ere obstacle problem $\det\D^2u= u^\alpha\chi_{\{u>0\}}$ with $\alpha\in[0,n)$. Their optimal dimension bound $d<\frac{n+\alpha}{2}$ for singular flat pieces is the same as our finite-slope threshold $\alpha>2d-n$. However, since their $u$ is nonnegative, this problem has an essential difference with our equation $\det\D^2u=(-u)^\alpha$ with nonpositive $u$, except in the case $\alpha=0$ discussed in Remark \ref{rk_alpha0}. The common threshold arises from the same scaling law and dimensionality.
\end{itemize}
\subsection{Sketch of proofs and organization of the paper}
The proofs of the four parts of the two theorems are independent of each other. Theorem \ref{thm_A} \ref{item_thmA1} is proved by contradiction, based on the convergence of dilated $u$'s toward directional derivatives. The familiar version of this convergence says that if $u\in C(\overline{\Omega})$ is convex, vanishes at $x_0\in\pa\Omega$, and has finite slope at $x_0$, then as $\lambda\to+\infty$,
$$
u_\lambda(x):=\lambda\, u\!\left(x_0+\frac{x-x_0}{\lambda}\right)
$$
converges locally uniformly to a $1$-homogeneous convex function $u_\infty$, namely the directional derivatives, defined on the tangent cone of $\Omega$ at $x_0$. We study a variant of this construction, where $x_0$ is replaced by a flat boundary piece $F$ of any dimension and we only scale in the directions transverse to $F$. 
The contradiction argument proceeds by showing that, on the one hand, $u_\infty$ has vanishing Monge-Amp\`ere measure; on the other hand, when $\alpha\leq 2d-n$, the hypothetical inequalities on $\det \D^2u$ prevent $\det \D^2u_\lambda$ from tending to $0$.

For Theorem \ref{thm_A} \ref{item_thmA2}, we use the following estimate for the Monge-Amp\`ere measure $\mathcal{M}u$ of a convex function $u$ defined on a neighborhood of the origin in a convex cone $\Gamma\subset\R^n$: when a compact set $K\subset\Gamma^\circ$ is shrunk by a factor of $t\to0^+$, its mass under $\mathcal{M}u$ satisfies
$$
\mathcal{M}u(tK)=o(t^{-n}).
$$
If $\Gamma$ has a $d$-dimensional ridge $L$ and $K$ is shrunk only in the directions transverse to $L$, the right-hand side improves to $o(t^{d-n})$ and implies the required conclusion.

The two parts of Theorem \ref{thm_B} are both proved with explicit barriers. Part \ref{item_thmB1} uses the following generalization of the classical Pogorelov-type barrier of Caffarelli \cite{Caffarelli_1993}:
\begin{equation}\label{eqn_introbarrier1}
	f(y,z)=(|y|^2+\Lambda)|z|^\frac{\alpha+2n-2d}{n}\ \  \text{ for } (y,z)\in\R^d\times\R^{n-d}=\R^n, 
\end{equation}
where $\Lambda>0$. When $\alpha>2d-n$, $f$ is convex and satisfies $\det\D^2f\geq |z|^\alpha$ in the slab $\{|y|<R\}$ for some $R=R(\Lambda)>0$.  
To apply the barrier to the equation $\det\D^2u=(-u)^\alpha$, we need the subcritical comparison principle of Le \cite{Le_4} (see Lemma \ref{lemma_Lecomparison}). For part \ref{item_thmB2}, we construct a new barrier of a similar form 
\begin{equation}\label{eqn_introbarrier2}
	f(y,z)=(|y|^2-\Lambda)(z_1\cdots z_{n-d})^\frac{\alpha+2n-2d}{n(n-d)},
\end{equation}
which now requires $2d-2n<\alpha< 2d-n$ and is convex only in $\{|y|<R,\,z_1,\cdots, z_{n-d}\geq0\}$.

In the rest of the paper, we first collect some preliminaries in \S \ref{sec_preliminaries}, then we present the above proofs of Theorem \ref{thm_A} \ref{item_thmA1}, Theorem \ref{thm_A} \ref{item_thmA2} and Theorem \ref{thm_B} in \S \ref{sec_infinite},  \S \ref{sec_nonexistence} and  \S \ref{sec_finiteslope}, respectively.

\subsection*{Acknowledgments} We are grateful to Mengni Li for enlightening discussions which brought our attention back to the problem in \cite[\S 6]{Nie-Seppi_2}.

\subsection*{Statement on the use of AI}
The finite- and infinite-slope results were obtained by the author in late 2025, with the finite-slope part based on more complicated barriers on simplices. When revisiting the work with the assistance of ChatGPT 5.6 Sol in August 2026, we obtained the nonexistence result for $\alpha\leq 2d-2n$ and the simpler barriers \eqref{eqn_introbarrier1} and \eqref{eqn_introbarrier2}.
The author independently verified every mathematical suggestion and derivation the model produced. 
When preparing this manuscript, the exposition and all mathematical proofs were written entirely by the author, and AI was used only for literature searches and final checks. 
The author takes full responsibility for the content of this paper.

\section{Preliminaries}\label{sec_preliminaries}
We begin in \S \ref{subsec_MA} by reviewing some background on Monge-Amp\`ere equations and finiteness of slopes. In \S \ref{subsec_comparison}, we present the comparison principle of Le \cite{Le_4} for the equation $\det\D^2u=(-u)^\alpha$ with $\alpha<n$. Finally, in \S \ref{subsec_coordinates}, we fix some terminology and notation concerning convex cones.

\subsection{Convex functions and Monge-Amp\`ere measures}\label{subsec_MA}
By a \emph{convex domain}, we mean a nonempty convex open set.
Given a convex function $u:\Omega\to\R$ on a convex domain $\Omega\subset\R^n$, a \emph{subgradient} of $u$ at $x\in \Omega$ is by definition an element of the set
\begin{equation}\label{eqn_definesubgradient}
\pa u(x):=\big\{a\in\R^n\,\big|\,u(x')\geq u(x)+a\cdot(x'-x)\ \text{ for all }x'\in \Omega\}.
\end{equation}
The \emph{Monge-Amp\`ere measure} $\mathcal{M}u$ of $u$ is the Borel measure on $\Omega$ defined by
$$
\mathcal{M}u(E):=|\pa u(E)|
$$
for any Borel set $E\subset\Omega$, where we denote
$$
\pa u(E):=\bigcup_{x\in E}\pa u(x),
$$
and let $|\cdot|$ denote the volume with respect to the Lebesgue measure $\dx$ on $\R^n$. If $u\in C^2(\Omega)$, we have
$$
\mathcal{M}u(E)=\int_E\det\D^2u(x)\dx,
$$
namely, the Hessian determinant $\det\D^2u$ represents the density of $\mathcal{M}u$ with respect to the Lebesgue measure. This allows us to interpret equations or inequalities with left-hand side $\det\D^2u$ \emph{in the Alexandrov sense}, i.e., as comparison relations between measures. In particular, a \emph{subsolution} (resp.\ \emph{supersolution}) to a Monge-Amp\`ere equation of the form $\det\D^2u=F(x,u)$ is by definition a convex function $u$ satisfying
$\det\D^2u\geq F(x,u)$ (resp.\ $\det\D^2u\leq F(x,u)$) in the Alexandrov sense. We often use both terms up to a multiplicative constant, e.g.\ loosely calling $u$ a subsolution when $\det\D^2u\geq MF(x,u)$ with a constant $M>0$. 

We collect below some standard results about Monge-Amp\`ere measures for later use. Here, we fix a bounded convex domain $\Omega\subset\R^n$. Most of the results are formulated for convex functions in $C(\overline{\Omega})$, i.e., those which extend continuously to the boundary $\pa\Omega$.

\begin{lemma}[Alexandrov maximum principle; see {\cite[Theorem 1.4.2]{Gutierrez}}]\label{lemma_AlexandrovMP}
For any convex function $u\in C(\overline{\Omega})$ such that $u|_{\pa\Omega}=0$, we have
$$
		|u(x)|^n\leq C_n\,\diam(\Omega)^{n-1} \dist(x,\pa\Omega)\mathcal{M}u(\Omega),
$$
where the constant $C_n>0$ only depends on the dimension $n$, and $\diam(\Omega)$ denotes the diameter of $\Omega$.
\end{lemma}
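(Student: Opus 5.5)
The plan is to follow the classical cone-comparison argument. Fix $x\in\Omega$ and assume $u(x)<0$; otherwise $u\equiv0$ at $x$ and there is nothing to prove. Since $u$ is convex and vanishes on $\pa\Omega$, we have $u\leq0$ on $\overline{\Omega}$. Let $v$ be the convex cone function on $\overline{\Omega}$ with apex $(x,u(x))$ and base $\pa\Omega\times\{0\}$; that is, $v$ is the largest convex function on $\overline{\Omega}$ with $v(x)=u(x)$ and $v|_{\pa\Omega}\leq 0$. Its graph is the cone over the graph of $u$ restricted to $\{x\}\cup\pa\Omega$. The first step is the inclusion
$$
\pa v(x)\subset\pa u(\Omega).
$$
To prove it, take $a\in\pa v(x)$ and consider the affine function $\ell(x')=u(x)+a\cdot(x'-x)$. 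By construction $\ell\leq0=u$ on $\pa\Omega$ and $\ell(x)=u(x)$. Translate $\ell$ downward until it first touches the graph of $u$ from below. Because $u-\ell\geq0$ on $\pa\Omega$ and $u-\ell$ vanishes at $x$, the minimum of $u-\ell$ over $\overline{\Omega}$ is $\leq0$. If that minimum is attained at an interior point, then $a\in\pa u$ at that point. The case where it is attained only on the boundary happens only if the minimum is $0$, and $0$ is also attained at $x$, which is interior. So $|\pa v(x)|\leq\mathcal{M}u(\Omega)$.

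The second step is a lower bound on $|\pa v(x)|$ in terms of $|u(x)|$, $\dist(x,\pa\Omega)$ and $\diam(\Omega)$. Pick a boundary point $x_1$ with $|x_1-x|=\dist(x,\pa\Omega)=:\delta$, and let $\nu=(x_1-x)/\delta$ be the outward direction; there is a supporting hyperplane of $\Omega$ at $x_1$ orthogonal to $\nu$. I will exhibit two families of slopes in $\pa v(x)$.

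1. The slope $a_0=\frac{|u(x)|}{\delta}\nu$ belongs to $\pa v(x)$. Indeed, every point $x'$ of $\Omega$ satisfies $(x'-x)\cdot\nu\leq\delta$, so the affine function $u(x)+a_0\cdot(x'-x)$ is $\leq0$ on $\pa\Omega$.
2. Every slope $b$ with $|b|\leq|u(x)|/\diam(\Omega)$ belongs to $\pa v(x)$, since then $u(x)+b\cdot(x'-x)\leq u(x)+|u(x)|=0$ for $x'\in\overline{\Omega}$.

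The set $\pa v(x)$ is convex, so it contains the convex hull of $a_0$ and the ball $B_{|u(x)|/\diam(\Omega)}(0)$. This hull contains a cone of height $|u(x)|/\delta$ over an $(n-1)$-dimensional disc of radius $|u(x)|/\diam(\Omega)$, whose volume is
$$
c_n\,\frac{|u(x)|}{\delta}\Big(\frac{|u(x)|}{\diam(\Omega)}\Big)^{n-1}.
$$

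Combining the two steps gives $|u(x)|^n\leq C_n\,\diam(\Omega)^{n-1}\dist(x,\pa\Omega)\,\mathcal{M}u(\Omega)$ with $C_n=1/c_n$, which is the claimed inequality. The only delicate point I expect is the inclusion $\pa v(x)\subset\pa u(\Omega)$, specifically ruling out touching only at boundary points; that case is handled by observing that $u-\ell$ vanishes at the interior point $x$.
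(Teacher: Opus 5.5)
Your proposal is correct and is essentially the standard cone-comparison proof of \cite[Theorem 1.4.2]{Gutierrez}, which the paper cites rather than reproving. In both, one compares $u$ with the cone $v$ over $\pa\Omega$ with apex $(x,u(x))$, shows $\pa v(x)\subset\pa u(\Omega)$, and bounds $|\pa v(x)|$ from below by the convex hull of the ball of radius $|u(x)|/\diam(\Omega)$ and the normal slope of length $|u(x)|/\dist(x,\pa\Omega)$; the only difference is that you prove the inclusion directly by sliding the affine function down, where Guti\'errez invokes his comparison lemma \cite[Lemma 1.4.1]{Gutierrez}, whose proof is the same argument.
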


\begin{lemma}[Comparison principle; see {\cite[Theorem 1.4.6]{Gutierrez}}]\label{lemma_comparison}
If two convex functions $u,v\in C(\overline{\Omega})$ satisfy
$$
u|_{\pa\Omega}\geq v|_{\pa\Omega},\qquad \det\D^2u\leq \det\D^2v \ \text{ in }\Omega,
$$
then we have $u\geq v$ in $\Omega$.
\end{lemma}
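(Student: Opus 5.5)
The statement to prove is the comparison principle (Lemma \ref{lemma_comparison}), cited from Gutierrez. The plan is the classical argument by contradiction. Suppose the set
$$
D:=\{x\in\Omega\,|\,v(x)>u(x)\}
$$
is nonempty. First perturb $v$ to make its Monge-Amp\`ere measure strictly larger: set
$$
v_\epsilon(x):=v(x)+\epsilon\big(|x-x_1|^2-\diam(\Omega)^2\big),
$$
with $x_1\in\Omega$ fixed. Then $v_\epsilon\leq v$ on $\pa\Omega$, so $v_\epsilon\le u$ on $\pa\Omega$. For $\epsilon$ small, the open set $D_\epsilon:=\{v_\epsilon>u\}$ is still nonempty, since it contains a point where $v-u$ attains a positive value. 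Since $u\ge v_\epsilon$ on $\pa\Omega$ and both functions are continuous on $\overline\Omega$, we get $\overline{D_\epsilon}\subset\Omega$ with $u=v_\epsilon$ on $\pa D_\epsilon$.

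The key step is the gradient-image inclusion $\pa v_\epsilon(D_\epsilon)\subset\pa u(D_\epsilon)$. Take $p\in\pa v_\epsilon(x)$ with $x\in D_\epsilon$, and consider the affine function $\ell(y)=v_\epsilon(x)+p\cdot(y-x)$. Lower it until it touches $u$ from below, i.e. take $\ell-c$ with $c=\max_{\overline{D_\epsilon}}(\ell-u)$. This $c$ is positive because $\ell(x)-u(x)=v_\epsilon(x)-u(x)>0$. On $\pa D_\epsilon$ we have $\ell\le v_\epsilon=u$, so the maximum is attained at an interior point $x'\in D_\epsilon$. Hence $p$ supports $u$ locally at $x'$, and by convexity $p\in\pa u(x')$. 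Taking Lebesgue measure of this inclusion gives $\mathcal{M}v_\epsilon(D_\epsilon)\le\mathcal{M}u(D_\epsilon)$.

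The main obstacle is the final estimate: showing $\mathcal{M}v_\epsilon(D_\epsilon)\ge\mathcal{M}v(D_\epsilon)+(2\epsilon)^n|D_\epsilon|$. This superadditivity, $\mathcal{M}(v+w)\ge\mathcal{M}v+\mathcal{M}w$ for convex $v,w$, is a Minkowski/Brunn-type inequality for Monge-Amp\`ere measures. I would prove it either by approximating with smooth convex functions, using $\det(A+B)\ge\det A+\det B$ for nonnegative matrices together with the weak continuity of $\mathcal{M}$, or directly by citing Gutierrez. Granting it, we get
$$
\mathcal{M}u(D_\epsilon)\le\mathcal{M}v(D_\epsilon)<\mathcal{M}v(D_\epsilon)+(2\epsilon)^n|D_\epsilon|\le \mathcal{M}v_\epsilon(D_\epsilon)\le\mathcal{M}u(D_\epsilon).
$$
The middle inequality is strict since $|D_\epsilon|>0$ (it is open and nonempty), and $\mathcal{M}u(D_\epsilon)$ is finite because $\overline{D_\epsilon}$ is compact in $\Omega$. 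This is a contradiction, so $D=\emptyset$ and $u\ge v$ in $\Omega$.
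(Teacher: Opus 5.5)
Your argument is correct and is essentially the standard proof behind the paper's citation to \cite[Theorem 1.4.6]{Gutierrez}: a quadratic perturbation, the gradient-image inclusion that the paper itself invokes via \cite[Lemma 1.4.1]{Gutierrez} in the proof of Lemma \ref{lemma_Lecomparison}, and superadditivity of $\mathcal{M}$; the paper gives no proof of its own. One small fix: $\overline{D_\epsilon}\subset\Omega$ does not follow from the non-strict inequality $u\geq v_\epsilon$ on $\pa\Omega$, but your $v_\epsilon$ actually gives the strict inequality $v_\epsilon<v\leq u$ on $\pa\Omega$, because $|x-x_1|<\diam(\Omega)$ for every $x\in\pa\Omega$ when $x_1$ is an interior point.
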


\begin{lemma}[Existence of solutions; see {\cite[Theorem 1.6.2]{Gutierrez}} and  \cite{Hartenstine2006}]\label{lemma_existence}
Let $\mu$ be a Borel measure on $\Omega$ with total mass $\mu(\Omega)<+\infty$ and $\varphi:\pa\Omega\to\R$ be a function which can be extended to a continuous convex function on $\overline{\Omega}$ (cf.\ Remark \ref{rk_extension} below). Then there exists a convex function $u\in C(\overline{\Omega})$ such that
$$
\det\D^2u=\mu\ \text{ in }\Omega,\quad u|_{\pa\Omega}=\varphi.
$$
\end{lemma}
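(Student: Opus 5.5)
The plan is to follow the classical Alexandrov–Bakelman approach. We first solve the problem for discrete measures, then pass to the limit. The uniform control near $\pa\Omega$ will come from two barriers: the given convex extension of $\varphi$, and the Alexandrov maximum principle (Lemma \ref{lemma_AlexandrovMP}).

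\textbf{Step 1: barriers.} Let $\tilde\varphi\in C(\overline{\Omega})$ be a convex extension of $\varphi$. Let $h$ be the supremum of all affine functions $\ell$ with $\ell\leq\varphi$ on $\pa\Omega$. Then $h\in C(\overline{\Omega})$, it is convex, it satisfies $h\geq\tilde\varphi$, and $h=\varphi$ on $\pa\Omega$. Continuity of $h$ at $\pa\Omega$ uses that $\varphi$ admits a convex extension; this is exactly where the hypothesis is needed when $\Omega$ is not strictly convex. Any convex $v$ with $v\leq\varphi$ on $\pa\Omega$ satisfies $v\leq h$, because each supporting affine function of $v$ at an interior point is $\leq\varphi$ on $\pa\Omega$.

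\textbf{Step 2: discrete measures.} Approximate $\mu$ weakly by finite sums $\mu_k=\sum_i a_i\delta_{x_i}$ with $x_i\in\Omega$ and $\mu_k(\Omega)\leq\mu(\Omega)$. For such $\mu_k$, we solve $\mathcal{M}u_k=\mu_k$ with $u_k|_{\pa\Omega}=\varphi$ by the Perron method. Let $u_k$ be the supremum of all convex $v\in C(\overline{\Omega})$ with $v|_{\pa\Omega}\leq\varphi$ and $\mathcal{M}v\geq\mu_k$. This class is nonempty: it contains $\tilde\varphi+w$, where $w$ is a cone-type function with vertices $x_i$, zero boundary values and masses $a_i$.

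The lower-semicontinuity properties of $\mathcal{M}$ under suprema show that $\mathcal{M}u_k\geq\mu_k$. On the other hand, $u_k$ is affine away from the points $x_i$. If the mass at some $x_i$ exceeded $a_i$, we could raise the vertex of $u_k$ at $x_i$ while staying in the class, contradicting maximality; this is Alexandrov's polyhedral argument. Hence $\mathcal{M}u_k=\mu_k$.

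\textbf{Step 3: uniform bounds and boundary control.} Let $w_k$ be the discrete solution with zero boundary values and $\mathcal{M}w_k=\mu_k$. By the superadditivity $\mathcal{M}(a+b)\geq\mathcal{M}a+\mathcal{M}b$, we have $\mathcal{M}(\tilde\varphi+w_k)\geq\mu_k$. The comparison principle (Lemma \ref{lemma_comparison}) and Step 1 then give
$$
\tilde\varphi+w_k\leq u_k\leq h .
$$
Lemma \ref{lemma_AlexandrovMP} gives $|w_k(x)|\leq C\,\dist(x,\pa\Omega)^{1/n}\mu(\Omega)^{1/n}$, uniformly in $k$. So the $u_k$ are uniformly bounded. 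They also converge to $\varphi$ at $\pa\Omega$ uniformly in $k$, because both barriers are continuous and equal $\varphi$ there.

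\textbf{Step 4: passage to the limit.} Convexity makes the $u_k$ locally equi-Lipschitz. Together with Step 3, this gives equicontinuity on $\overline{\Omega}$. By Arzel\`a–Ascoli, a subsequence converges uniformly on $\overline{\Omega}$ to a convex $u\in C(\overline{\Omega})$ with $u|_{\pa\Omega}=\varphi$. Weak continuity of Monge–Amp\`ere measures under locally uniform convergence then yields $\mathcal{M}u_k\to\mathcal{M}u$ weakly, hence $\mathcal{M}u=\mu$.

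\textbf{Main obstacle.} I expect Step 3 to be the hardest part, specifically equicontinuity up to a possibly non-strictly-convex boundary. For strictly convex domains, one can build boundary barriers from supporting hyperplanes directly. In general, one must rely on the convex extension $\tilde\varphi$ and on the continuity of the upper envelope $h$, which is the content of \cite{Hartenstine2006}. I would first verify continuity of $h$ at boundary points lying in flat boundary pieces, using the convexity of $\tilde\varphi$ along segments of $\pa\Omega$.
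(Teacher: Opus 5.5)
The paper does not prove this lemma; it quotes it from \cite{Gutierrez} (strictly convex domains) and \cite{Hartenstine2006} (general convex domains with convexly extendable data). Your outline is the standard argument behind those citations: discrete approximation, the two-sided barrier $\tilde\varphi+w_k\le u_k\le h$, the Alexandrov maximum principle for a boundary modulus uniform in $k$, then Arzel\`a--Ascoli and weak stability. Steps 1, 3 and 4 are sound (see the last paragraph for the continuity of $h$). One step in Step 2, however, fails as written.

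The assertion that the Perron function $u_k$ ``is affine away from the points $x_i$'' is false in this generality. For the unit disk, $\varphi=0$ and a single atom at the center, the solution is the circular cone $s(|x|-1)$, which is nowhere locally affine. Polyhedrality, and with it Alexandrov's polyhedral vertex-raising, is only available for polytopes with piecewise-linear data. What you actually need is $\mathcal{M}u_k(\Omega\setminus\{x_1,\dots,x_N\})=0$ together with $\mathcal{M}u_k(\{x_i\})\le a_i$, and each requires a local lifting argument that does not use polyhedrality.

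\emph{Zero mass off the atoms.} Suppose a compact $E$ avoiding the atoms had $|\pa u_k(E)|>0$. Slopes lying in $\pa u_k$ at two distinct points form a null set, so there is $p\in\pa u_k(E)$ whose supporting affine function $\ell$ touches $u_k$ on $\overline{\Omega}$ only at one point $x\in E$ (a contact at a boundary point would force contact along a segment through interior points). Then $\max(u_k,\ell+\delta)$, for small $\delta>0$, differs from $u_k$ only in a small ball around $x$ avoiding the atoms. It stays in your class and exceeds $u_k$ at $x$, contradicting maximality.

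\emph{Mass at most $a_i$ at the atoms.} Suppose $|\pa u_k(x_i)|>a_i$. Choose a compact convex $K'$ with $K'+\overline{B_\eta}\subset\pa u_k(x_i)$ and $|K'|>a_i$, and replace $u_k$ by $\max(u_k,c)$ with $c(x)=u_k(x_i)+\epsilon+\max_{q\in K'}q\cdot(x-x_i)$. Since $u_k\ge c-\epsilon+\eta|x-x_i|$, this changes $u_k$ only in $B_{\epsilon/\eta}(x_i)$. The new function has subgradient set $K'$ at $x_i$, stays in the class, and is larger at $x_i$.

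Finally, the ``main obstacle'' you anticipate is milder than you expect. Since $\tilde\varphi$ is a closed convex function, it is the supremum of its affine minorants, so $h\ge\tilde\varphi$ on all of $\overline{\Omega}$. This gives $h|_{\pa\Omega}=\varphi$ and lower semicontinuity, and it is the only place the extension hypothesis enters. Upper semicontinuity at any $x_0\in\pa\Omega$, on a flat piece or not, is elementary. The ray from $x_0$ through $x\in\Omega$ exits at some $p\in\pa\Omega$, so $x=(1-t)x_0+tp$ and $h(x)\le(1-t)\varphi(x_0)+t\varphi(p)$. Either $p$ is close to $x_0$, or $t=|x-x_0|/|p-x_0|$ is small; in both cases the right-hand side is close to $\varphi(x_0)$.
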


\begin{lemma}[Stability of Monge-Amp\`ere measure; see {\cite[Lemma 1.2.3]{Gutierrez}}]\label{lemma_stability}
Let $u_k:\Omega\to\R$, $k=1,2,\cdots$ be a sequence of convex functions converging to a convex function $u_\infty:\Omega\to\R$ uniformly on compact subsets of $\Omega$. Then the Monge-Amp\`ere measure $\mathcal{M}u_k$ converges weakly to $\mathcal{M}u_\infty$.
\end{lemma}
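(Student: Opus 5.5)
The plan is the standard two-sided semicontinuity argument. Weak convergence here means $\int_\Omega\varphi\,\dif\mathcal{M}u_k\to\int_\Omega\varphi\,\dif\mathcal{M}u_\infty$ for every $\varphi\in C_c(\Omega)$. By the usual portmanteau-type argument, it suffices to prove three things:
\begin{itemize}
\item $\limsup_k\mathcal{M}u_k(K)\leq\mathcal{M}u_\infty(K)$ for every compact $K\subset\Omega$;
\item $\liminf_k\mathcal{M}u_k(U)\geq\mathcal{M}u_\infty(U)$ for every open $U\Subset\Omega$;
\item the masses $\mathcal{M}u_k(K)$ are bounded uniformly in $k$ for each compact $K$.
\end{itemize}
Given these, for $\varphi\geq0$ one approximates $\varphi$ by layer-cake decompositions over superlevel sets $\{\varphi\geq s\}$ (compact) and $\{\varphi>s\}$ (open). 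This gives both $\limsup$ and $\liminf$ bounds on $\int\varphi\,\dif\mathcal{M}u_k$, using Fatou and reverse Fatou in $s$ together with the uniform bound.

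\emph{Uniform bounds and the compact case.} Fix compact $K$ and a slightly larger compact $K_\delta=\{x:\dist(x,K)\leq\delta\}\subset\Omega$. Local uniform convergence gives $\sup_k\|u_k\|_{L^\infty(K_\delta)}<\infty$. The elementary gradient estimate for convex functions then bounds $|p|\leq 2\sup_{K_\delta}|u_k|/\delta$ for $p\in\pa u_k(K)$, which yields the uniform bound on $\mathcal{M}u_k(K)$.

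Next I claim that for every $\varepsilon>0$, eventually $\pa u_k(K)\subset N_\varepsilon(\pa u_\infty(K))$, the $\varepsilon$-neighborhood. Suppose not. Then there are $x_k\in K$ and $p_k\in\pa u_k(x_k)$ with $\dist(p_k,\pa u_\infty(K))\geq\varepsilon$. By the bounds above we may pass to a subsequence with $x_k\to x$ and $p_k\to p$. Passing to the limit in $u_k(x')\geq u_k(x_k)+p_k\cdot(x'-x_k)$, using uniform convergence, gives $p\in\pa u_\infty(x)$, a contradiction. Since $\pa u_\infty(K)$ is compact, $|N_\varepsilon(\pa u_\infty(K))|\to|\pa u_\infty(K)|$ as $\varepsilon\to0$, which gives the $\limsup$ inequality.

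\emph{Open sets, the main step.} I expect the $\liminf$ inequality to be the main obstacle, because subgradients of the limit need not be approximated by subgradients of $u_k$ when the supporting hyperplane touches along a set. I will invoke the classical fact that the set $S$ of $p\in\R^n$ lying in $\pa u_\infty(x)\cap\pa u_\infty(x')$ for two distinct points $x\neq x'$ has Lebesgue measure zero. This holds because such $p$ are points of non-differentiability of the Legendre transform $u_\infty^*$, which is convex and hence differentiable a.e.

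Now take $p\in\pa u_\infty(U)\setminus S$, attained at $x_0\in U$. Then $u_\infty(x)-p\cdot x$ has a \emph{strict} minimum at $x_0$ over $\Omega$. Fix a closed ball $\overline{B}\subset U$ around $x_0$; there is $\eta>0$ with $u_\infty-p\cdot x\geq u_\infty(x_0)-p\cdot x_0+\eta$ on $\pa B$. By uniform convergence on $\overline B$, for large $k$ the function $u_k-p\cdot x$ attains its minimum over $\overline B$ at an interior point $x_k\in B$. By convexity this local minimum is global on $\Omega$, so $p\in\pa u_k(x_k)\subset\pa u_k(U)$. Hence $\chi_{\pa u_\infty(U)\setminus S}\leq\liminf_k\chi_{\pa u_k(U)}$ pointwise, and Fatou's lemma gives $\mathcal{M}u_\infty(U)=|\pa u_\infty(U)\setminus S|\leq\liminf_k\mathcal{M}u_k(U)$.
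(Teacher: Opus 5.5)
Your proof is correct and is essentially the standard argument behind the result the paper cites from Guti\'errez's book; the paper itself gives no proof. The shared steps are the $\limsup$ bound on compact sets via compactness of subgradients, the $\liminf$ bound on open sets via the null set of slopes supporting $u_\infty$ at two distinct points plus the strict-minimum perturbation, and a portmanteau-type reduction; the steps you leave implicit are routine here (closedness of $\pa u_\infty(K)$ follows from your own subsequence argument, and finiteness of $u_\infty^*$ holds because $\Omega$ is bounded, so $u_\infty$ is bounded below).
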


The last result is often used in combination with the following fact about convex functions:
\begin{lemma}[Uniform convergence of convex functions; see {\cite[Theorem 10.8]{Rockafellar}}]\label{lemma_uniform}
Let $u_k:\Omega\to\R$, $k=1,2,\cdots$ be a sequence of convex functions pointwise converging to a function $u_\infty:\Omega\to\R$. Then $u_\infty$ is convex and the convergence $u_k\to u_\infty$ is uniform on compact subsets of $\Omega$.
\end{lemma}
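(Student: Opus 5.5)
The plan is to reduce uniform convergence to a uniform Lipschitz bound, obtain that bound from convexity together with pointwise convergence at finitely many points, and finish with an $\varepsilon$-net argument.

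\emph{Convexity of the limit.} For $x,x'\in\Omega$ and $t\in[0,1]$ we have
$$
u_k(tx+(1-t)x')\leq t\,u_k(x)+(1-t)\,u_k(x').
$$
Letting $k\to\infty$ at these three fixed points gives the same inequality for $u_\infty$, so $u_\infty$ is convex.

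\emph{Uniform bounds on a neighborhood of $K$.} Fix a compact set $K\subset\Omega$ and choose $\delta>0$ such that the closed $2\delta$-neighborhood of $K$ is contained in $\Omega$. Cover this closed $2\delta$-neighborhood by finitely many closed cubes $Q_1,\dots,Q_m\subset\Omega$.
\begin{itemize}
\item \emph{Upper bound.} Each point of $Q_j$ is a convex combination of the $2^n$ vertices of $Q_j$. By convexity, $u_k$ on $Q_j$ is bounded above by the maximum of $u_k$ over those vertices. There are finitely many vertices in total, and $u_k$ converges at each of them, so the values $u_k$ at the vertices are bounded uniformly in $k$. Hence there is a constant $A$ with $u_k\leq A$ on the $2\delta$-neighborhood for all $k$.
\item \emph{Lower bound.} For $x$ in the $\delta$-neighborhood of $K$, pick a center $c$ from a finite $\delta/2$-net of that neighborhood with $|x-c|<\delta/2$. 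Then the reflected point $2c-x$ satisfies $|(2c-x)-c|<\delta/2$, so it lies in the $2\delta$-neighborhood, where the upper bound holds. Convexity at $c$ gives
$$
u_k(x)\geq 2u_k(c)-u_k(2c-x)\geq 2u_k(c)-A.
$$
The net is finite and $u_k$ converges at each net point, so the right-hand side is bounded below uniformly in $k$. Hence $u_k\geq -B$ on the $\delta$-neighborhood for all $k$.
\end{itemize}

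\emph{Equi-Lipschitz bound on $K$.} Let $x,y\in K$ and extend the segment from $x$ through $y$ until it meets the boundary of the $\delta$-neighborhood at a point $z$. By monotonicity of difference quotients of convex functions along a line,
$$
u_k(y)-u_k(x)\leq \frac{A+B}{\delta}\,|y-x|.
$$
By symmetry in $x$ and $y$, the family $\{u_k\}$ is uniformly Lipschitz on $K$ with constant $(A+B)/\delta$. The same bound then passes to $u_\infty$.

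\emph{Conclusion.} Given $\varepsilon>0$, take a finite $\eta$-net of $K$ with $\eta(A+B)/\delta<\varepsilon$. By pointwise convergence, for large $k$ we have $|u_k-u_\infty|<\varepsilon$ at every net point. Each $x\in K$ is within $\eta$ of some net point, so the equi-Lipschitz bound gives $|u_k(x)-u_\infty(x)|<3\varepsilon$. This proves uniform convergence on $K$.

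The main obstacle is the uniform two-sided bound, and in particular the lower bound. Convexity controls $u_k$ from above directly, but bounding it from below needs the reflection trick at finitely many centers, and the neighborhoods must be nested so that the reflected points stay inside the region where the upper bound is already known.
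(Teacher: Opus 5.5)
Your proof is correct and follows the same route as the source the paper cites: the paper gives no proof of its own and defers to Rockafellar's Theorem 10.8, which likewise passes through uniform two-sided bounds from finitely many convergent values, an equi-Lipschitz estimate on compact subsets (his Theorem 10.6), and a finite-net argument. One detail should be stated explicitly, since $K$ need not be convex: take $z$ to be the first point beyond $y$ on the ray where it leaves the $\delta$-neighborhood, so that $|z-y|\geq\delta$, and note that the slope comparison only uses $[x,z]\subset\Omega$ (by convexity of $\Omega$), not that the segment stays inside the neighborhood.
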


While the Monge-Amp\`ere measure is defined only in the interior of the domain of a convex function, the notion of subgradients makes sense also on the boundary. Namely, the definition \eqref{eqn_definesubgradient} remains valid when $x\in\pa\Omega$ for a convex function $u\in C(\overline{\Omega})$, even though it is now possible that $\pa u(x)=\varnothing$. We will denote a boundary point by $x_0\in\pa\Omega$ instead.
The existence of subgradients at $x_0$ can be characterized in terms of the one-sided directional derivative
\begin{equation}\label{eqn_slope}
	\lim_{t\to 0^+}\frac{u\big(x_0+t(x_1-x_0)\big)-u(x_0)}{t}\in[-\infty,+\infty),
\end{equation}
where $x_1\in\Omega$ is a point in the interior, and the limit exists (possibly being $-\infty$) because by convexity, the fraction is decreasing as $t$ goes to $0^+$. More precisely, we have:
\begin{proposition}[{\cite[Proposition 4.13]{Nie-Seppi_1}}]\label{prop_slopes}
Given a convex function $u\in C(\overline{\Omega})$ and a boundary point $x_0\in\pa\Omega$, the following conditions are equivalent to each other:
\begin{itemize}
	\item The limit \eqref{eqn_slope} is finite for some $x_1\in\Omega$.

	\item The limit \eqref{eqn_slope} is finite for all $x_1\in\Omega$.
	\item $u$ admits a subgradient at $x_0$, i.e., $\pa u(x_0)\neq\varnothing$.
	\item There exists a sequence $(x_k)_{k=1,2,\cdots}$ of points in $\Omega$ tending to $x_0$ such that $u$ is differentiable\footnote{A convex function $u:\Omega\to\R$ is differentiable at $x\in\Omega$ if and only if $\pa u(x)$ consists of a single point, namely the gradient $\D u(x)$. The set of points where $u$ is not differentiable has measure zero. See \cite[\S 25]{Rockafellar}.} at $x_k$ and $|\D u(x_k)|$ does not tend to $+\infty$.
\end{itemize}
\end{proposition}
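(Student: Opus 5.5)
We prove the chain (3)$\Rightarrow$(1)$\Rightarrow$(2)$\Rightarrow$(3) among the first three conditions, and then show that (4) is equivalent to (3). Throughout, fix the convex function $u\in C(\overline{\Omega})$ and $x_0\in\pa\Omega$. For $x_1\in\Omega$ set
$$
D(x_1):=\lim_{t\to0^+}\frac{u\big(x_0+t(x_1-x_0)\big)-u(x_0)}{t}\in[-\infty,+\infty).
$$
The quotient is nondecreasing in $t$ and bounded above by its value at $t=1$. Hence the limit exists, and it is finite exactly when the quotient is bounded below.

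\emph{(3)$\Rightarrow$(1).} If $a\in\pa u(x_0)$, every quotient is at least $a\cdot(x_1-x_0)$, so $D(x_1)$ is finite for every $x_1$.

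\emph{(1)$\Rightarrow$(2).} Suppose $D(x_1)>-\infty$ and let $x_2\in\Omega$ be arbitrary. Since $x_1$ is interior, there are $s\in(0,1)$ and $x_3\in\Omega$ with $x_1=s x_2+(1-s)x_3$; take $x_3$ slightly beyond $x_1$ on the ray from $x_2$ through $x_1$. The map $v\mapsto u(x_0+v)$ is convex, so the one-sided derivative at $0$ is a convex, positively homogeneous function of the direction $v$ on the cone $\R_{>0}(\Omega-x_0)$. In particular
$$
D(x_1)\leq s\,D(x_2)+(1-s)\,D(x_3),
$$
and $D(x_3)<+\infty$. Therefore $D(x_2)>-\infty$.

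\emph{(2)$\Rightarrow$(3).} This is the main step. Let $K$ be the open cone $\R_{>0}(\Omega-x_0)$ and define $\phi(v)$ as the directional derivative along $v\in K$. By (2), $\phi$ is finite on $K$; it is also convex and $1$-homogeneous there. By convexity of $u$,
$$
u(x)\geq u(x_0)+\phi(x-x_0)\quad\text{for all } x\in\Omega.
$$
Because $\phi$ is finite and convex on the open convex set $K$, it has a subgradient at an interior point. For a $1$-homogeneous function, a subgradient $a$ at any point $v\in K$ gives
$$
\phi(w)\geq \phi(v)+a\cdot(w-v)=a\cdot w,
$$
using Euler's relation $\phi(v)=a\cdot v$. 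Combining the two displays yields $u(x)\geq u(x_0)+a\cdot(x-x_0)$ on $\Omega$, and this inequality passes to $\overline{\Omega}$ by continuity. Hence $a\in\pa u(x_0)$. The only delicate point is obtaining a genuine subgradient from the homogeneous function, and Euler's relation handles it.

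\emph{(3)$\Rightarrow$(4).} Take $a\in\pa u(x_0)$ and $x_1\in\Omega$, and set $x(t)=x_0+t(x_1-x_0)$. For a point $x$ where $u$ is differentiable, monotonicity of the gradient of a convex function along segments gives
$$
\D u(x)\cdot(x_1-x_0)\leq \frac{u(x_1)-u(x)}{1-t}.
$$
Convexity along the segment, together with $a$, gives the lower bound $\D u(x)\cdot(x_1-x_0)\geq a\cdot(x_1-x_0)$. This controls only one component, so we use a set of directions: choose $x_1^{(1)},\dots,x_1^{(n)}$ in a small ball in $\Omega$ such that the vectors $x_1^{(i)}-x_0$ span $\R^n$. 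Points $x$ in the small cone $\{x_0+t(y-x_0): y\in B,\ t\in(0,1/2)\}$ see all nearby directions. For such $x$, the subgradient inequality at $x$ against points of $B$ and against $x_0$ bounds $\D u(x)\cdot w$ from above for $w$ in an open set of directions. This bound holds on both sides, $w$ and $-w$, in the sense that
$$
\D u(x)\cdot(y-x)\leq u(y)-u(x)\ \ \text{for } y\in B,\qquad \D u(x)\cdot(x_0-x)\leq u(x_0)-u(x),
$$
with both right-hand sides bounded. Since $|x-x_0|\sim t$, the second inequality reads
$$
-\D u(x)\cdot(x-x_0)\leq u(x_0)-u(x)\leq -a\cdot(x-x_0),
$$
so that $\D u(x)\cdot v$ is bounded below for $v$ near $x_1-x_0$. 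The first inequality gives upper bounds for directions ranging over a neighborhood of $x_1-x_0$. Taken together, these confine $\D u(x)$ to a bounded set: an upper bound $\D u(x)\cdot v\leq C$ for $v$ in a neighborhood, plus a lower bound in one interior direction $v_0$ of that neighborhood, gives $|\D u(x)|\leq C'$. Since $u$ is differentiable almost everywhere, we can pick differentiability points $x_k\to x_0$ inside this cone, and $|\D u(x_k)|$ stays bounded.

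\emph{(4)$\Rightarrow$(3).} Let $|\D u(x_k)|\leq C$ along a subsequence and pass to a further subsequence with $\D u(x_k)\to a$. The inequalities
$$
u(x')\geq u(x_k)+\D u(x_k)\cdot(x'-x_k)\quad\text{for } x'\in\Omega
$$
pass to the limit by continuity of $u$ on $\overline{\Omega}$, giving $u(x')\geq u(x_0)+a\cdot(x'-x_0)$. Hence $a\in\pa u(x_0)$.

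The main obstacle is the gradient bound in (3)$\Rightarrow$(4). The simplest route is to estimate $\D u(x)\cdot v$ from both sides using the subgradient inequality at $x$, tested against $x_0$ and against a fixed interior ball.
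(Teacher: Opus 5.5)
The paper gives no proof of this proposition; it quotes it from \cite{Nie-Seppi_1}. So there is no in-text argument to compare against, and your proposal works as a self-contained proof. Its structure is correct.
\begin{itemize}
\item (1)$\Rightarrow$(2) uses sublinearity of the directional derivative $\phi$ on the open cone $\R_{>0}(\Omega-x_0)$. This is legitimate in the extended sense, since all values lie in $[-\infty,+\infty)$.
\item (2)$\Rightarrow$(3) is exactly the mechanism of Remark~\ref{rk_homogeneity}: a subgradient of a $1$-homogeneous convex function at an interior point of its cone is, by Euler's relation, a subgradient at the apex. You apply it to $\phi$ instead of to $u_\infty$, and combine it with $u(x)-u(x_0)\geq\phi(x-x_0)$.
\item (4)$\Rightarrow$(3) is the standard compactness-plus-limit argument.
\end{itemize}

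The step that needs tightening is (3)$\Rightarrow$(4). Take $x=x_0+t(y_0-x_0)$ with $y_0\in B=B_r(c)$.
\begin{itemize}
\item Your lower bound on $\D u(x)$ is available at the vector $y_0-x=(1-t)(y_0-x_0)$.
\item Your upper bound $\D u(x)\cdot w\leq 2\|u\|_{L^\infty(\overline{\Omega})}$ holds for $w\in B-x$.
\item The ball around $y_0-x$ contained in $B-x$ has radius $r-|y_0-c|$, which degenerates as $y_0\to\pa B$.
\end{itemize}
So ``upper bound on a neighborhood plus lower bound at an interior direction'' does not give a constant that is uniform over your whole cone. This is harmless, since you only need one sequence, and there are two one-line fixes.
\begin{itemize}
\item Pick the $x_k$ in the cone over $B_{r/2}(c)$.
\item Or test the subgradient inequality at $x$ against all $y\in\Omega$. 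Take $\overline{B}\subset\Omega$, set $\rho:=\dist(\overline{B},\pa\Omega)>0$, and use $y=y_0+\rho'\,\D u(x)/|\D u(x)|$ with $\rho'<\rho$. Combined with your bound $\D u(x)\cdot(y_0-x)\geq-|a|\sup_{y\in B}|y-x_0|$, this gives, uniformly on the cone,
$$
|\D u(x)|\leq \rho^{-1}\Big(2\|u\|_{L^\infty(\overline{\Omega})}+|a|\sup_{y\in B}|y-x_0|\Big).
$$
\end{itemize}
The spanning family $x_1^{(1)},\dots,x_1^{(n)}$ and the remark about $w$ and $-w$ play no role and can be dropped.
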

Thus, we may introduce the main terminology of this paper formally as:
\begin{definition}[(In)finiteness of slopes]\label{def_slopes}
In the setting of Proposition \ref{prop_slopes}, $u$ is said to \emph{have finite slope at $x_0$} if the conditions hold; otherwise, it is said to \emph{have infinite slope at $x_0$}.
\end{definition}
As pointed out in the introduction, if $u$ vanishes on a flat boundary piece $F\subset\pa\Omega$, then having finite slope at a point $x_0$ in the relative interior $F^\circ$ of $F$ implies the finiteness of slopes everywhere on $F$. Indeed, $a\in\R^n$ is a subgradient of $u$ at $x_0$ if and only if $u$ is bounded from below by an affine function $h:\R^n\to\R$ of the form $h(x)= a\cdot x+b$ ($b\in\R$) with $h(x_0)=u(x_0)$. The restriction of $h$ to the affine subspace $L:=\Aff(F)$ is an affine function on $L$ which is nonpositive on the open subset $F^\circ\subset L$ and vanishes at $x_0\in F^\circ$; such a function must be zero. Thus, $h$ coincides with $u$ at any point of $F$, and hence $a$ is a subgradient of $u$ at that point.

\begin{remark}[$u\in C(\overline{\Omega})$ is inessential]\label{rk_extension}
As in much of the literature on Monge-Amp\`ere equations, we assume $u\in C(\overline{\Omega})$ in the above discussions for simplicity, but this assumption is not essential. In fact, every convex function $u:\Omega\to\R$ can be uniquely  extended to a lower semicontinuous convex function $u:\overline{\Omega}\to(-\infty,+\infty]$ by setting its boundary values to be
$$
u|_{\pa\Omega}:\pa\Omega\to (-\infty,+\infty],\qquad u(x_0):=\liminf_{x\in\Omega,\,x\to x_0}u(x).
$$
This extension belongs to $C(\overline{\Omega})$ if and only if $u|_{\pa\Omega}\in C(\pa\Omega)$ (which requires $u|_{\pa\Omega}<+\infty$ in particular); see e.g.~\cite[\S 4]{Nie-Seppi_1}. Most of the discussions above actually only require $u|_{\pa\Omega}<+\infty$, and do not need $u|_{\pa\Omega}$ to be continuous. 
For example, in Lemma \ref{lemma_existence}, we may only assume that $\varphi:\pa\Omega\to\R$ is the boundary value of some convex function $\Omega\to\R$ in the above sense (although the resulting solution $u$ might not be in $C(\overline{\Omega})$).
This assumption is equivalent to the condition that $\varphi$ is lower semicontinuous and restricts to a convex function on every line segment contained in $\pa\Omega$ (see \cite[Lemma 4.6]{Nie-Seppi_1}).
\end{remark}

\subsection{Subcritical comparison principle}\label{subsec_comparison}
For sub- and supersolutions to the equation $\det\D^2u=(-u)^\alpha$, the usual comparison principle (Lemma \ref{lemma_comparison}) does not apply, but we have the following alternative:
\begin{lemma}[Subcritical comparison principle]\label{lemma_Lecomparison}
Let $\Omega\subset\R^n$ be a bounded convex domain and $u,v\in C(\overline{\Omega})$ be convex functions such that 
$$
0\geq u\geq v \ \text{ on } \pa\Omega,\quad\det\D^2u\leq M(-u)^\alpha\ \text{ and }\ \det\D^2v\geq M(-v)^\alpha \ \text{ in }\Omega
$$
for $\alpha<n$ and $M>0$. Moreover, assume $v<0$ in $\Omega$. Then we have $u\geq v$ in $\Omega$.
\end{lemma}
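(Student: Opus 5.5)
The plan is to argue by contradiction with a scaling of the subsolution $v$. Since $\alpha<n$, multiplying a subsolution by a factor $t>1$ improves the inequality: $\det\D^2(tv)=t^n\det\D^2v$ gains a factor $t^n$, while the right-hand side $M(-tv)^\alpha$ only gains $t^\alpha$.

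First, some preliminary reductions. Since $u\le 0$ on $\pa\Omega$ and $u$ is convex, we have $u\le0$ in $\Omega$. If $u$ vanishes at an interior point, then $u\equiv0$ and the claim $u\ge v$ is trivial. So we may assume $u<0$ and $v<0$ in $\Omega$. Then $u\ge v$ is equivalent to $r:=u/v\le1$, where $r>0$ in $\Omega$. Suppose, for contradiction, that $R:=\sup_\Omega r>1$. For $t\in(1,R)$, set
$$
D_t:=\{x\in\Omega\,:\,u(x)<t\,v(x)\}.
$$
This is a nonempty open set. On $\pa D_t\cap\Omega$ we have $u=tv$. On $\pa D_t\cap\pa\Omega$ we have $u\ge v\ge tv$, because $v\le0$ there. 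So $u\ge tv$ on $\pa D_t$.

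Inside $D_t$ we have $-u>t(-v)>0$, and we want $\det\D^2u\le\det\D^2(tv)$ there. Then the comparison principle (Lemma \ref{lemma_comparison}) applied on a component of $D_t$ gives $u\ge tv$ on $D_t$, which contradicts the definition of $D_t$. The inequality splits into two cases.

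\emph{Case $\alpha\le0$.} Since $-u>t(-v)$ and $\alpha\le0$,
$$
\det\D^2u\le M(-u)^\alpha\le Mt^\alpha(-v)^\alpha\le Mt^n(-v)^\alpha\le\det\D^2(tv).
$$
This holds for any $t>1$, and in fact for any $t\in(1,R)$.

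\emph{Case $0<\alpha<n$.} Now we use the upper bound $-u\le R(-v)$, which gives
$$
\det\D^2u\le MR^\alpha(-v)^\alpha.
$$
This is at most $Mt^n(-v)^\alpha\le\det\D^2(tv)$ as soon as $t\ge R^{\alpha/n}$. Because $\alpha<n$ and $R>1$, the interval $[R^{\alpha/n},R)$ is nonempty, so we can pick such a $t$. To apply Lemma \ref{lemma_comparison} rigorously on $D_t$, which need not be convex, I would use the version of the comparison principle for bounded open sets; the cited proof works verbatim there. Alternatively, one can note that $u$ and $tv$ are continuous on $\overline{D_t}$.

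The main obstacle is the case $0<\alpha<n$ when $R=+\infty$. This can happen if $v$ tends to $0$ at $\pa\Omega$ much faster than $u$. In that situation the ratio bound $-u\le R(-v)$ is useless. I would first try to localize so that the ratio stays bounded. Replace $v$ by
$$
v_\varepsilon:=v+\varepsilon
$$
on $\Omega_\varepsilon:=\{v<-\varepsilon\}$. For $\alpha\ge0$ this is still a subsolution, since $-v\ge-v_\varepsilon$, and $v_\varepsilon<0$ in $\Omega_\varepsilon$. Symmetrically, shift $u$ to $u-\delta$, which is still a supersolution for $\alpha\ge0$. The shifts are to be chosen so that on $\pa\Omega_\varepsilon$ the boundary inequality holds strictly away from zero. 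Then $-(u-\delta)\ge\delta$ while $-v_\varepsilon$ is bounded, so the corresponding ratio $R$ is finite. Running the argument above gives $u-\delta\ge v_\varepsilon$ on $\Omega_\varepsilon$. Letting $\varepsilon,\delta\to0$ would then give $u\ge v$.

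The delicate point is matching the boundary data on $\pa\Omega_\varepsilon\cap\Omega$, where $v_\varepsilon=0$. I expect this to require choosing $\delta=\delta(\varepsilon)\to0$ via the uniform continuity of $u$ and $v$ on $\overline{\Omega}$, together with $u\ge v$ on $\pa\Omega$.
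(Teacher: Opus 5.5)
You have a genuine gap in the case $0<\alpha<n$ with $R=\sup_\Omega u/v=+\infty$. Your argument on the sublevel sets $D_t$ is otherwise correct. For $\alpha\le 0$ it is a valid alternative to the paper's additive-$\epsilon$ argument, and it handles $\alpha=0$ and $\alpha<0$ together. For $0<\alpha<n$ it works whenever $R<\infty$.

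The case $R=+\infty$ cannot be excluded a priori when $v$ vanishes at some boundary points, and the localization you sketch does not work. Take a point of $\pa\Omega_\varepsilon\cap\Omega$; this set is nonempty unless $v\le-\varepsilon$ on all of $\overline\Omega$, in which case $R<\infty$ anyway. There $v_\varepsilon=0$, while $u-\delta\le-\delta<0$. So the boundary inequality $u-\delta\ge v_\varepsilon$ fails for every $\delta\ge 0$, and no choice of $\delta(\varepsilon)$ via uniform continuity repairs it. In fact the inequality $u-\delta\ge v+\varepsilon$ you aim to derive on $\Omega_\varepsilon$ is false in general, for instance when $u=v$ is a solution. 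The underlying obstruction is a direction mismatch when $\alpha>0$, because $s\mapsto s^\alpha$ is increasing. The additive shifts that preserve the differential inequalities (raising $v$, lowering $u$) go the wrong way for the boundary comparison. The shifts that go the right way ($v-\varepsilon$, $u+\delta$) destroy the sub- and supersolution inequalities.

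The missing idea, used in the paper, is to make $v$ strictly negative up to $\pa\Omega$ by a radial dilation rather than a shift.
\begin{itemize}
\item Translate so that $0\in\Omega$ and $u(0)<v(0)$, and set $v_\lambda(x):=v(\lambda^{-1}x)$ for $\lambda>1$.
\item Convexity and $v(0)<0$ give $v_\lambda<\lambda^{-1}v\le 0$ on $\overline\Omega$. Hence $u/v_\lambda$ is continuous on $\overline\Omega$, so $R_\lambda:=\sup_\Omega u/v_\lambda<\infty$.
\item Moreover $u/v_\lambda<\lambda$ on $\pa\Omega$, and $R_\lambda\ge\rho:=u(0)/v(0)>1$.
\item The only cost is $\det\D^2 v_\lambda\ge\lambda^{-2n}M(-v_\lambda)^\alpha$, a loss that tends to $1$ as $\lambda\to1^+$.
\end{itemize}

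With this, your $D_t$ argument closes. For $\lambda$ close to $1$ you have $\lambda^2<\rho^{1-\alpha/n}\le R_\lambda^{1-\alpha/n}$. So you can pick $t\in[\max\{\lambda,\lambda^2R_\lambda^{\alpha/n}\},R_\lambda)$. Then $\{u<tv_\lambda\}$ is nonempty and $u\ge tv_\lambda$ on its boundary. On that set, $\det\D^2u\le MR_\lambda^\alpha(-v_\lambda)^\alpha\le M t^n\lambda^{-2n}(-v_\lambda)^\alpha\le\det\D^2(tv_\lambda)$. The comparison principle then gives the contradiction.

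The paper finishes slightly differently. It uses the interior maximum $\Lambda$ of $u/v_\lambda$, compares $u$ with $\Lambda v_\lambda+\epsilon$, and chooses $\lambda$ so that $\lambda^{-2n}\Lambda^{n-\alpha}\ge 1$.
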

This result is classical when $\alpha\leq 0$ (see the proof below) and is due to Le \cite[Lemma 2.1]{Le_4} when $0<\alpha<n$ (see also \cite[Theorem 1.5]{Le_5} for a generalization). We present below a self-contained proof for completeness. Note that the conclusion fails when $\alpha\geq n$, because scaling up $u$ or scaling down $v$ preserves the inequalities on Monge-Amp\`ere measures in this case. For example, if $\det\D^2v\geq M(-v)^\alpha$, $\alpha\geq n$ and $0<\lambda\leq1$, then
\begin{equation}\label{eqn_scaledownv}
\det \D^2(\lambda v)=\lambda^n\det\D^2v\geq M\lambda^n(-v)^\alpha=M\lambda^{n-\alpha}(-\lambda v)^\alpha\geq M(-\lambda v)^\alpha.
\end{equation}
\begin{proof}	
	
\textbf{Case $\alpha<0$.} The usual comparison principle (Lemma \ref{lemma_comparison}) has a classical generalization to sub- and supersolutions of the equation $\det\D^2u=F(x,u)$ with right-hand side $F(x,u)$  non-decreasing in $u$ (see e.g.\ \cite[Prop.\ 2]{Cheng-Yau}, \cite[Lemma 6.4]{Nie-Seppi_2}), from which the required statement for $\alpha\leq 0$ follows. For completeness, we now give a simplified proof for the case $\alpha<0$, where $(-u)^\alpha$ is strictly increasing in $u$.

Suppose by contradiction that $u<v$ somewhere in $\Omega$. We may pick $\epsilon>0$ such that
$$
E:=\{x\in\overline{\Omega}\mid u(x)< v(x)-\epsilon\}
$$
is nonempty. Since $u|_{\pa\Omega}\geq v|_{\pa\Omega}$, we have $\overline{E}\subset\Omega$ and $u=v-\epsilon$ on $\pa E$. Then, on the one hand, since $u=v-\epsilon$ on $\pa E$ and $u\leq v-\epsilon$ in $E$, the mass of the Monge-Amp\`ere measure of $u$ on $E$ is no less than that of $v-\epsilon$ (see e.g.\ \cite[Lemma 1.4.1]{Gutierrez}):
$$
\mathcal{M}u(E)\geq \mathcal{M}(v-\epsilon)(E)=\mathcal{M}v(E);
$$ 
on the other hand, by the hypothesis on $\det\D^2u$ and $\det \D^2v$, we have
$$
\mathcal{M}u(E)\leq M\int_{E}(-u(x))^\alpha\dif x< M\int_E(-v(x))^\alpha\dif x\leq \mathcal{M}u(E)
$$
(the ``$<$'' uses the conditions $\alpha<0$, $v<0$ in $\Omega$, and $\overline{E}\subset\Omega$, the last two ensuring that the integrals are finite), a contradiction.

\textbf{Case $\alpha\geq0$.} Let us first give a proof under the extra assumption $v|_{\pa\Omega}<0$. Suppose by contradiction that $u<v$ somewhere in $\Omega$. By assumptions (note that $u$ and $v$ are nonpositive), the ratio $u/v$ is continuous on $\overline{\Omega}$, and we have $u/v\leq 1$ on $\pa\Omega$ and $u/v>1$ somewhere in $\Omega$. Therefore, $u/v$ attains its maximum
$$
\Lambda:=\max_{x\in\overline{\Omega}}\frac{u(x)}{v(x)}>1
$$
at some $x_0\in\Omega$. The function $w:=\Lambda v$ has the following properties:
\begin{enumerate}[label=(\alph*)]
\item\label{item_leproof1}
$u\geq w$ in $\Omega$, with equality attained at $x_0$;
\item\label{item_leproof2} 
$u>w$ on $\pa\Omega$;
\item\label{item_leproof3} 
$\det \D^2w\geq M(-w)^\alpha$.
\end{enumerate}
They follow respectively from the definition of $w$, the inequalities $u\geq v$ and $v>w$ on $\pa\Omega$ (the latter uses $v|_{\pa\Omega}<0$), and a calculation similar to \eqref{eqn_scaledownv}.
By property \ref{item_leproof2}, we may pick $\epsilon>0$ such that 
\begin{equation}\label{eqn_leproof1}
u\geq w+\epsilon\ \text{ on }\pa\Omega.
\end{equation}
Since $(-u)^\alpha$ is non-increasing in $u$, it follows from properties \ref{item_leproof1} and \ref{item_leproof3} that
\begin{equation}\label{eqn_leproof2}
\det \D^2u\leq M(-u)^\alpha\leq M(-w)^\alpha\leq \det \D^2w=\det \D^2(w+\epsilon)
\end{equation}
By the usual comparison principle (Lemma \ref{lemma_comparison}), \eqref{eqn_leproof1} and \eqref{eqn_leproof2} imply that $u\geq w+\epsilon$ in $\Omega$, contradicting property \ref{item_leproof1}. This completes the proof when $v|_{\pa\Omega}<0$. 

The general case can be reduced to the $v|_{\pa\Omega}<0$ case by dilating $v$ as follows. Again, suppose by contradiction that $u<v$ somewhere in $\Omega$. By translation, we may assume $0\in\Omega$ and $u(0)<v(0)$. Dilating the domain of $v$ radially by a factor of $\lambda>1$ and restricting the dilated function to $\Omega$, we obtain the function
$$
v_\lambda:\overline{\Omega}\to \R,\quad v_\lambda(x):=v(\lambda^{-1}x).
$$
Since $v(0)<0$, we have, by convexity,
\begin{equation}\label{eqn_leproof2.1}
v_\lambda(x)=v\big((1-\lambda^{-1})0+\lambda^{-1}x\big)\leq (1-\lambda^{-1})v(0)+\lambda^{-1}v(x)<\lambda^{-1}v(x)\ \text{ for all } x\in\overline{\Omega}.
\end{equation}
In particular, $v_\lambda|_{\pa\Omega}<0$.
Meanwhile, the condition $\det\D^2v\geq M(-v)^\alpha$ implies 
\begin{equation}\label{eqn_leproof3}
\det \D^2v_\lambda(x)=\lambda^{-2n}(\det \D^2v)(\lambda^{-1}x)\geq \lambda^{-2n}M(-v_\lambda(x))^\alpha\ \text{ for all } x\in\Omega.
\end{equation}
Replacing $v$ by $v_\lambda$, we may repeat the above derivation of contradiction for $v|_{\pa\Omega}<0$ as long as we pick $\lambda>1$ small enough such that properties \ref{item_leproof1}, \ref{item_leproof2} and \ref{item_leproof3} hold. More specifically, by first letting $\lambda$ be smaller than
$$
\frac{u(0)}{v_\lambda(0)}=\frac{u(0)}{v(0)}>1
$$
and noting that $u/v_\lambda<\lambda$ on $\pa\Omega$ by \eqref{eqn_leproof2.1}, we may assume that $u/v_\lambda$ attains its maximum $\Lambda>1$ at some $x_0\in\Omega$ and set $w:=\Lambda v_\lambda$ as before. We then adjust $\lambda$ and check those properties (with $v$ replaced by $v_\lambda$):
\begin{itemize}
\item By \eqref{eqn_leproof2.1} and the definition of $\Lambda$ and $w$, properties \ref{item_leproof1} and \ref{item_leproof2} hold.
%\item 
%Since $u(0)<v(0)<0$, we may first pick $\lambda>1$ small enough such that 
%$$
%	\Lambda\geq\frac{u(0)}{v_\lambda(0)}=\frac{u(0)}{\lambda v(0)}>1.
%$$
\item 
By \eqref{eqn_leproof3}, we have
$$
\det \D^2w= \Lambda^n\det\D^2 v_\lambda\geq \Lambda^n\lambda^{-2n}M(-v_\lambda)^\alpha=M\lambda^{-2n}\Lambda^{n-\alpha}(-w)^\alpha.
$$
Since $\alpha<n$ and $\Lambda>1$, by decreasing $\lambda$ (which increases $\Lambda$) if necessary, we may arrange that $\lambda^{-2n}\Lambda^{n-\alpha}\geq1$, so that property \ref{item_leproof3} holds.
\end{itemize}
Now that the three properties are established, the rest of the argument is the same as that for $v|_{\pa\Omega}<0$. 
\end{proof}

\subsection{Convex cones}\label{subsec_coordinates}
The following definitions and facts about convex cones are standard (see e.g.\ \cite[\S 8]{Rockafellar}), apart from minor differences in terminology\footnote{What we call a ``convex cone'' here is more specifically a ``full dimensional closed convex cone'', and what we call ``ridge'' is usually referred to as ``lineality space'' in convex analysis.}: 
\begin{itemize}[leftmargin=1cm]
\item 
By a \emph{convex cone} in the linear space $\R^n$, we mean a closed convex subset with nonempty interior and invariant under positive scaling. 
\item 
Given a convex cone $\Gamma\subset\R^n$, we refer to its intersection with the opposite cone, namely
$$
L_\Gamma:=\Gamma\cap (-\Gamma),
$$
as the \emph{ridge} of $\Gamma$. It is the largest linear subspace of $\R^n$ contained in $\Gamma$.
\item 
$\Gamma$ is said to be \emph{proper} if $L_\Gamma=\{0\}$. This is equivalent to 
the condition that $\Gamma$ does not contain any entire straight line, and also equivalent to the existence of an affine hyperplane $H\subset\R^n$ not passing through $0$ such that $H\cap\Gamma$ is nonempty and compact. In this case, $H\cap\Gamma$ is the closure of a bounded convex domain in $H$, and every point of $\Gamma\setminus\{0\}$ is a positive multiple of a unique point in $H\cap\Gamma$.
%$\Gamma$ is said to be \emph{proper} if $L_\Gamma=\{0\}$. This is equivalent to any of the following conditions:
%\begin{itemize}
%	\item $\Gamma$ does not contain any entire straight line.
%	\item There exists an $(n-1)$-dimensional linear subspace $L\subset\R^n$ such that $L\cap\Gamma=\{0\}$.
%	\item There exists an affine hyperplane $H\subset\R^n$ not passing through $0$ such that $H\cap\Gamma$ is nonempty and compact.
%\end{itemize}
%In this case, $H\cap\Gamma$ is the closure of a bounded convex domain in $H$, and every point of $\Gamma\setminus\{0\}$ is a positive multiple of a unique point in $H\cap\Gamma$.
\item 
At the other extreme, note that $\Gamma$ is the whole $\R^n$ when $\dim L_\Gamma=n$, and is a closed half-space when $\dim L_\Gamma=n-1$. 
\item 
Given a convex cone $\Gamma\subset\R^n$, after choosing a linear subspace $L^\perp$ of $\R^n$ complementary to the ridge $L=L_\Gamma$, we can write $\Gamma$ as a product: 
$$
\Gamma=L\times \Gamma_0\subset L\oplus L^\perp=\R^n,
$$ 
where $\Gamma_0$ is a proper convex cone in the linear space $L^\perp$.
\end{itemize}

However, in the context of Theorem \ref{thm_B} and Corollary \ref{cor_C} in the introduction, $\R^n$ is understood as an affine space rather than a linear space, hence the above definitions should be understood with respect to a choice of the origin (or equivalently, modulo a translation). In particular, given $\Omega$, $\Gamma$, $L$, and $x_0$ as in Theorem \ref{thm_B}, after a translation, we may assume that $x_0$ is the origin $0\in\R^n$, so that $L$ is a $d$-dimensional linear subspace of $\R^n$ and $\Gamma$ is a convex cone with ridge $L_\Gamma=L$ under the above definition. By applying a linear transformation, we may further assume that
\begin{itemize}
	\item $L=\R^d\times\{0\}\subset\R^d\times\R^{n-d}=\R^n$;
	\vspace{0.1cm}
	\item $\Gamma=\R^d\times \Gamma_0\subset\R^d\times\R^{n-d}=\R^n$, where $\Gamma_0$ is a proper convex cone in $\R^{n-d}$.
\end{itemize}
Such a splitting $\R^n=\R^d\times\R^{n-d}$ will be used throughout the rest of the paper: when working in the setting of Theorem \ref{thm_A}, we will also assume, by a change of coordinates, that the flat boundary piece $F$ lies in $L=\R^d\times\{0\}=\Aff(F)$. In any case, $\R^d$ is understood as the spatial factor along the ridge or the flat boundary piece, and $\R^{n-d}$ as the transverse factor. The latter is not canonical and depends on a choice of a linear subspace complementary to $L$, but this choice plays no role in the paper. Finally, we always denote a point in $\R^n$ by $x=(y,z)$ with $y\in\R^d$ and $z\in\R^{n-d}$ (see Figure \ref{fig_domains}).

\begin{remark}
The shape of the convex cone $\Gamma\cong\R^d\times\Gamma_0$, which is determined by the shape of its proper factor $\Gamma_0$, plays no role in our analysis. For example, when requiring $\Omega$ to be contained in $\Gamma$ as in Theorem \ref{thm_B}, by enlarging $\Gamma$ and applying a linear transformation, we may assume that $\Gamma_0$ is a proper convex cone of any familiar form, say $\R^{n-d}_{\geq0}$ or 
$\big\{z\in\R^{n-d}\,\big|\,z_{n-d}\geq |(z_1,\cdots,z_{n-d-1})|\big\}$.
The main purpose of introducing convex cones and ridges in this paper is to give a coordinate-free exposition.
%avoiding coordinate notation of the above type.
\end{remark}

\section{Infinite-slope result: convergence toward directional derivatives}\label{sec_infinite}

We prove Theorem \ref{thm_A} \ref{item_thmA1} in this section. For clarity, we first treat the $d=0$ case in \S \ref{subsec_d0} to illustrate the main idea, and then the case of general $0\leq d\leq n-1$ in \S \ref{subsec_generald}.

\subsection{Case $d=0$}\label{subsec_d0}
As a toy version of the main construction of this section, consider a $1$-dimensional convex function $u\in C([0,1])$ with $u(0)=0$. By ``zooming in'' near $0$, namely scaling both the domain and the function values by a factor $\lambda>1$, we get the function
$u_\lambda(x):=\lambda u(\lambda^{-1}x)$.
It is decreasing in $\lambda$ and converges,  if $u$ has finite slope at $0$, to the linear function $u_\infty(x)=u'(0)x$. See Figure \ref{fig_ulambda}

\begin{figure}[ht]
	\centering
	\includegraphics[width=0.37\textwidth]{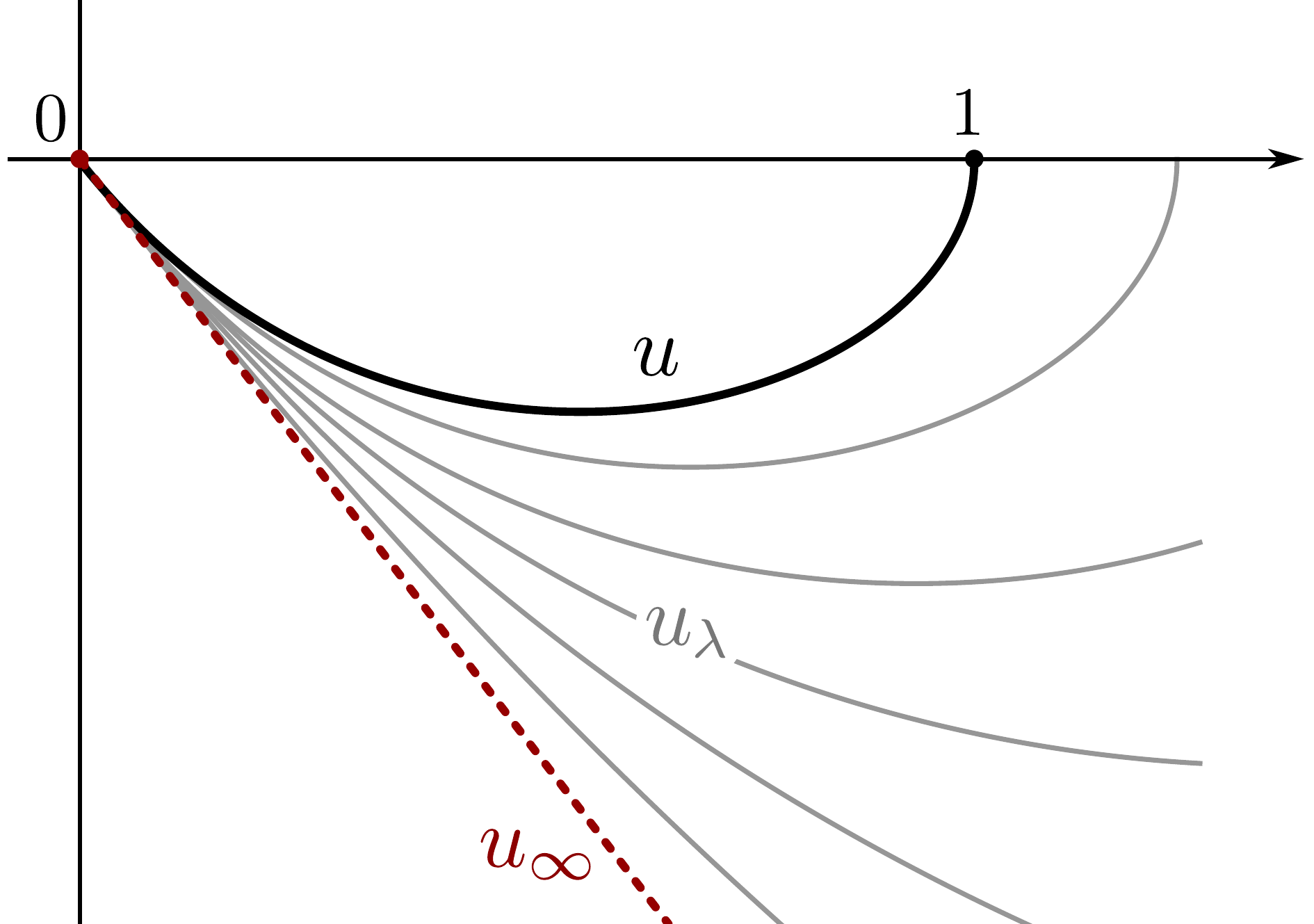}
	\caption{$u_\lambda\to u_\infty$ as $\lambda\to+\infty$.}
	\label{fig_ulambda}
\end{figure}

The same convergence holds in the more general setting where $u$ is defined on a bounded convex domain $\Omega\subset\R^n$ with $0\in\pa\Omega$ and $u(0)=0$; the only complication is that the limit function $u_\infty$ is now a $1$-homogeneous convex function on the \emph{tangent cone} of $\Omega$ at $0$. This function is extensively used in convex analysis and is called \emph{one-sided directional derivatives} (cf.\ \S \ref{subsec_MA} and \cite[\S 23]{Rockafellar}).
However, for our purposes, we do not need to specify the domain of $u_\infty$---knowing that $u_\lambda$ converges to $u_\infty$ within any sub-cone $\Gamma$ of the tangent cone suffices. We formulate the relevant part of the construction as the following proposition. Here and below, we let $B^m_r$ denote the open ball of radius $r$ in $\R^m$ centered at the origin, and omit the superscript if $m=n$. Note that by our convention, convex cones are closed sets with nonempty interior (see \S \ref{subsec_coordinates}).
\begin{proposition}[Directional derivatives at a boundary point]\label{prop_d0}
Let $\Omega\subset\R^n$ be a bounded convex domain, $u\in C(\overline{\Omega})$ be a convex function, and $\Gamma\subset\R^n$ be a proper convex cone. Assume that 
\begin{itemize}
\item The boundary $\pa\Omega$ contains the origin $0\in\R^n$.
\item $\Gamma\cap B_\epsilon \setminus\{0\}\subset \Omega$ for some $\epsilon>0$.
\item
$u(0)=0$.
\item
$u$ has finite slope at $0$.
\end{itemize}
Consider the family $(u_\lambda)_{\lambda\geq1}$ of convex functions defined by
\begin{equation}\label{eqn_propd0}
u_\lambda:\Gamma\cap B_{\lambda\epsilon}\to\R,\quad u_\lambda(x):=\lambda u(\lambda^{-1}x).
\end{equation}
Then there exists a $1$-homogeneous continuous convex function $u_\infty$ on $\Gamma$ such that $u_\lambda$ converges to $u_\infty$ uniformly on compact subsets of $\Gamma$ as $\lambda\to+\infty$.
\end{proposition}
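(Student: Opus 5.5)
The plan is to use monotonicity in $\lambda$ together with a uniform lower bound coming from the subgradient, and then to invoke Lemma \ref{lemma_uniform}.

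\textbf{Step 1: monotonicity.} Fix $x\in\Gamma$. For $\lambda\geq1$ with $|x|<\lambda\epsilon$, the point $\lambda^{-1}x$ lies in $\Gamma\cap B_\epsilon$. Indeed, $\Gamma$ is invariant under positive scaling, and $\Gamma\cap B_\epsilon\subset\overline{\Omega}$ (the origin lies in $\pa\Omega$). So $u_\lambda(x)$ is well defined. Take $\mu>\lambda$ and write $\mu^{-1}x=s\,\lambda^{-1}x+(1-s)\,0$ with $s=\lambda/\mu\in(0,1)$. Convexity of $u$ on the segment $[0,\lambda^{-1}x]\subset\overline{\Omega}$ and $u(0)=0$ give
$$
u(\mu^{-1}x)\leq s\,u(\lambda^{-1}x),
$$
that is, $u_\mu(x)\leq u_\lambda(x)$. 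Hence $\lambda\mapsto u_\lambda(x)$ is non-increasing. (This is the same monotone difference quotient as in \eqref{eqn_slope}.)

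\textbf{Step 2: lower bound.} Since $u$ has finite slope at $0$, Proposition \ref{prop_slopes} provides a subgradient $a\in\pa u(0)$, so $u(x')\geq a\cdot x'$ for all $x'\in\overline{\Omega}$. Scaling gives $u_\lambda(x)\geq a\cdot x$. Therefore the pointwise limit
$$
u_\infty(x):=\lim_{\lambda\to+\infty}u_\lambda(x)\in[a\cdot x,\,u_{\lambda_0}(x)]
$$
exists and is finite for every $x\in\Gamma$.

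\textbf{Step 3: properties of $u_\infty$.} Homogeneity follows because $u_\lambda(tx)=t\,u_{\lambda/t}(x)$ for $t>0$, and letting $\lambda\to\infty$ gives $u_\infty(tx)=t\,u_\infty(x)$. Convexity holds as a pointwise limit of convex functions: on any convex compact subset of $\Gamma$, all $u_\lambda$ are eventually defined and convex.

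\textbf{Step 4: uniform convergence and continuity.} This is the main obstacle, because Lemma \ref{lemma_uniform} gives local uniform convergence only on the interior $\Gamma^\circ$, while the statement asks for compact subsets of the closed cone $\Gamma$, including boundary rays and the apex. I would handle it with Dini's theorem. The functions $u_\lambda$ are continuous on $\Gamma\cap B_{\lambda\epsilon}$, since $u\in C(\overline{\Omega})$, and they decrease monotonically. So on a compact $K\subset\Gamma$ the convergence is uniform once $u_\infty$ is shown to be continuous on $K$.

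For continuity of $u_\infty$ on all of $\Gamma$, note two facts. First, $u_\infty$ is upper semicontinuous, being an infimum of continuous functions. Second, a convex function on a polyhedral set is upper semicontinuous, so the difficulty is lower semicontinuity at boundary points. If needed I would reduce to polyhedral $\Gamma$ by covering $K$ with a slightly larger polyhedral cone inside $\Gamma\cup B$. The cleaner route is to use $1$-homogeneity: $u_\infty$ is determined by its restriction to the compact convex section $H\cap\Gamma$. A finite convex function on a compact convex set that is the decreasing limit of continuous convex functions $u_\lambda$, with $u_\lambda\geq a\cdot x$, is lower semicontinuous along segments; this follows from convexity combined with the two-sided bound. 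Together these give continuity, and Dini then gives uniform convergence on $K$.

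If this boundary continuity turns out to be delicate for non-polyhedral $\Gamma$, I would use the fallback of replacing $\Gamma$ by any smaller proper cone whose interior contains $K\setminus\{0\}$. This suffices for the later applications, and the apex is handled by the bound $|u_\lambda(x)|\leq C|x|$. That bound comes from $a\cdot x\leq u_\lambda(x)\leq u_1(x)$ together with the Lipschitz-type control near $0$ provided by homogeneity.
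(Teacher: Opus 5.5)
Steps 1--3 are correct and match the paper: monotonicity along rays, finiteness of the limit from a subgradient $a\in\pa u(0)$, and homogeneity. Dini's theorem would also be a legitimate tool once $u_\infty$ is known to be continuous. The gap is precisely that continuity at points of $\pa\Gamma\setminus\{0\}$.

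You claim that a decreasing limit of continuous convex functions squeezed between $a\cdot x$ and $u_{\lambda_0}$ is lower semicontinuous along segments ``by convexity and the two-sided bound''. This is false, even when the limit is $1$-homogeneous. On the closed quadrant $\R^2_{\geq0}$, take $u(x_1,x_2)=\max\{0,\,x_2-\sqrt{x_1}\}$. It is convex and continuous, vanishes at $0$, and has subgradient $0$ there. Yet $\lambda u(\lambda^{-1}x)=\max\{0,\,x_2-\sqrt{\lambda x_1}\}$ decreases to the function equal to $x_2$ on $\{x_1=0\}$ and to $0$ elsewhere, which is discontinuous along that ray. On the segment $\{(s,1):0\leq s\leq 1\}$ the convergence is not uniform: the gap is $\tfrac12$ at $s=\tfrac{1}{4\lambda}$.

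Your other two routes do not close the gap either. The polyhedral remark only yields upper semicontinuity, which you already had. The fallback to a \emph{smaller} cone proves a weaker statement, since no cone inside $\Gamma$ contains points of $\pa\Gamma\setminus\{0\}$ in its interior.

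What is missing is the use of the openness of $\Omega$, which is exactly what the example violates. There $s\mapsto u(s,1)$ has slope $-\infty$ at $s=0^+$, so $u$ admits no convex extension across the ray. Your aside about a slightly larger cone points the right way, but its role is not polyhedrality. As in the paper:
\begin{enumerate}
\item Pick an affine hyperplane $H\not\ni 0$ close to $0$ with $H\cap\Gamma$ compact, so that $H\cap\Gamma\subset\Gamma\cap B_\epsilon\setminus\{0\}\subset\Omega$.
\item By openness and compactness, a closed $\eta$-neighborhood of $H\cap\Gamma$ in $H$ still lies in $\Omega$.
\item The cone $\Gamma'$ over this neighborhood is proper and contains $\Gamma\setminus\{0\}$ in its interior. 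It satisfies $\Gamma'\cap B_{\epsilon'}\setminus\{0\}\subset\Omega$ because $\Omega$ is convex and $0\in\overline{\Omega}$.
\end{enumerate}
Your Steps 1--3 then run verbatim on $\Gamma'$. So $u_\infty$ is a finite convex function on $\Gamma'$, hence continuous on $(\Gamma')^\circ\supset\Gamma\setminus\{0\}$. Lemma \ref{lemma_uniform} (or Dini) then gives uniform convergence on compact subsets of $\Gamma\setminus\{0\}$.

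The apex can be handled by your bound $|u_\lambda(x)|\leq C|x|$. Its upper half comes from convexity of $u$ along rays and boundedness of $u$ on $\Gamma\cap\pa B_r$. Alternatively, use the paper's observation: $t\mapsto u_\lambda(tx)-u_\infty(tx)$ is convex, nonnegative and zero at $t=0$, hence non-decreasing on $[0,1]$.
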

\begin{remark}[Precise meaning of the uniform convergence]\label{rk_domain}
	The domain of $u_\lambda$ expands as $\lambda$ increases, and eventually contains any compact subset of $\Gamma$. The statement ``$u_\lambda$ converges uniformly on compact subsets of $\Gamma$'' should be understood as ``for any compact set $K\subset\Gamma$, there exists $\lambda_0\geq1$ such that $(u_\lambda)_{\lambda\geq\lambda_0}$ is defined on $K$ and converges uniformly on $K$''.
\end{remark}

\begin{remark}[$1$-homogeneous convex functions on a proper convex cone]\label{rk_homogeneity}
By a ``$1$-homogeneous convex function on $\Gamma$'', we mean a convex function $f:\Gamma\to\R$ such that $f(tx)=tf(x)$ for all $t\geq0$. Given a hyperplane $H\subset\R^n\setminus\{0\}$ such that $H\cap\Gamma$ is nonempty and compact, such a function $f$ is determined by its restriction to $H\cap\Gamma$, which can be any convex function $H\cap\Gamma\to\R$. Requiring $f$ to be continuous is the same as requiring $f|_{H\cap \Gamma}$ to be continuous up to the boundary.  A basic fact which plays a crucial role below is that for any $1$-homogeneous convex function $f:\Gamma\to\R$, the Monge-Amp\`ere measure $\mathcal{M}f$ (defined in the interior $\Gamma^\circ$ of $\Gamma$) vanishes. A proof goes as follows. Given a point $x\in\Gamma^\circ$ and a subgradient $a\in\pa f(x)\subset\R^n$, we have
\begin{equation}\label{equation_homogeneousMA}
f(x')\geq f(x)+a\cdot (x'-x)\ \ \text{ for all }x'\in\Gamma.
\end{equation}
By taking $x'=tx$ with $0<t<1$ and $t>1$ respectively, we infer that
\begin{equation}\label{equation_homogeneousMA2}
f(x)=a\cdot x.
\end{equation}
Inserting \eqref{equation_homogeneousMA2} back into \eqref{equation_homogeneousMA}, we get
$$
f(x')\geq a\cdot x'\ \ \text{ for all }x'\in\Gamma,
$$
which means that $a$ is also a subgradient of $f$ at the origin $0$, i.e., $a$ is contained in
$$
\pa f(0)=\big\{b\in\R^n\,\big|\,f(y)\geq b\cdot y \text{ for all }y\in\Gamma\big\}=\bigcap_{y\in\Gamma}\big\{b\in\R^n\,\big|\,f(y)\geq b\cdot y\big\}.
$$
In the last expression, we wrote the convex set $\pa f(0)$ as an intersection of affine half-spaces. By \eqref{equation_homogeneousMA2}, $a$ lies on the boundary of one of these half-spaces, hence also lies on the boundary of $\pa f(0)$. Since the boundary of a convex set in $\R^n$ has zero Lebesgue measure, we conclude that $\mathcal{M}f$ is the zero measure, as required.
\end{remark}

\begin{proof}[Proof of Proposition \ref{prop_d0}]
	By the convexity of $u$, for any fixed $x\in\Gamma$, the value $u_\lambda(x)$ (well defined when $\lambda$ is large enough) is non-increasing in $\lambda$.
%	 Indeed, for $\lambda_1<\lambda_2$, we have
%	$$
%	u\left(\frac{x}{\lambda_2}\right)=u\left(\left(1-\frac{\lambda_1}{\lambda_2}\right)0+\frac{\lambda_1}{\lambda_2}\frac{x}{\lambda_1}\right)\leq \left(1-\frac{\lambda_1}{\lambda_2}\right)u(0)+\frac{\lambda_1}{\lambda_2}u\left(\frac{x}{\lambda_1}\right)=\frac{\lambda_1}{\lambda_2}u\left(\frac{x}{\lambda_1}\right).
%	$$
	Since $u$ has finite slope at $0$, the pointwise limit $u_\infty:=\lim_{\lambda\to+\infty}u_\lambda$ exists (cf.\ Definition \ref{def_slopes}).	
	Its $1$-homogeneity follows from the definition of $u_\lambda$.
	To show that the convergence is uniform on compact subsets, first note that 
	since $u_\lambda(tx)-u_\infty(tx)$ is a nonnegative convex function of $t\in[0,1]$ and vanishes at $t=0$, it is non-decreasing in $t$, hence
    $$
    0\leq u_{\lambda}(t x)-u_\infty(t x)\leq u_\lambda(x)-u_\infty(x)\ \ \text{ for all }t\in[0,1].
    $$	
  It follows that if the convergence is uniform on $E\subset\Gamma$, so is it on $\{tx\mid t\in[0,1],\, x\in E\}$. Therefore, we only need to show that the convergence is uniform on any compact set $K\subset\Gamma$ not containing $0$.

   Next, by considering a hyperplane $H\subset\R^n$ whose intersection with $\Gamma$ is compact and contained in $\Omega$, we may find a larger proper convex cone $\Gamma'\subset\R^n$ which contains $\Gamma\setminus\{0\}$ in its interior and still satisfies $\Gamma'\cap B_\epsilon\setminus\{0\}\subset\Omega$ for some $\epsilon>0$. Then the pointwise convergence $u_\lambda\to u_\infty$ also holds on $\Gamma'$.
   By Lemma \ref{lemma_uniform}, $u_\infty$ is convex (hence continuous) in the interior $\Gamma'^\circ$ of $\Gamma'$ and the convergence is uniform on compact subsets of $\Gamma'^\circ$. In particular, it is uniform on the above $K$, as required.
\end{proof}

This proposition is the $d=0$ case of Proposition  \ref{prop_generald} below.
%so we do not provide a separate proof here. 
%In the latter proposition, we will generalize the construction from a single boundary point to a flat boundary piece of any dimension $0\leq d\leq n-1$. 
Although that proposition will allow us to fully prove Theorem \ref{thm_A} \ref{item_thmA1}, we first present in this subsection the proof for $d=0$, as this case captures the main idea with simpler notation and technical details. The argument uses Lemma \ref{lemma_twodistances} below, whose proof in turn relies on Lemma \ref{lemma_twocones}.
\begin{lemma}\label{lemma_twodistances}
	Given $\Omega$ and $\Gamma$ as in Proposition \ref{prop_d0}, there exist constants $c,\epsilon>0$ such that 
	$$
	c|x|\leq \dist(x,\pa\Omega)\leq |x|\quad \text{ for all } x\in \Gamma\cap B_\epsilon\setminus\{0\}\subset\Omega.
	$$
\end{lemma}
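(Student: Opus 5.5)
The upper bound $\dist(x,\pa\Omega)\leq|x|$ is immediate, because $0\in\pa\Omega$. All the work is in the lower bound, and I plan to get it from a compactness argument on cones, which is presumably what Lemma \ref{lemma_twocones} provides. The idea is to sandwich $\Gamma$ inside a slightly larger cone that still lies in $\Omega$ near the origin; then every point of $\Gamma$ sits at a distance from $\pa\Omega$ comparable to $|x|$.

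First, as in the proof of Proposition \ref{prop_d0}, I take a hyperplane $H$ with $H\cap\Gamma$ compact, nonempty and contained in $\Omega$. After shrinking $\epsilon$ and rescaling $H$ toward $0$, I may assume $H\cap\Gamma\subset\Omega\cap B_\epsilon$. Since $\Omega$ is open and $H\cap\Gamma$ is compact, I can enlarge $H\cap\Gamma$ to a compact convex neighborhood $K'$ of it in $H$ with $K'\subset\Omega$. The cone $\Gamma'$ generated by $K'$ is then a proper convex cone containing $\Gamma\setminus\{0\}$ in its interior. By convexity of $\Omega$ and $0\in\overline{\Omega}$, it satisfies $\Gamma'\cap B_{\epsilon'}\setminus\{0\}\subset\Omega$ for some $\epsilon'>0$: each point $tk$ with $k\in K'$ and $0<t\leq1$ lies in $\Omega$, since it lies on the open segment from $0\in\overline\Omega$ to $k\in\Omega$.

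Next comes the cone comparison (the content I expect in Lemma \ref{lemma_twocones}). The set $S:=\Gamma\cap\pa B_1$ is compact and contained in the open set $(\Gamma')^\circ$, so $c_0:=\dist(S,\R^n\setminus(\Gamma')^\circ)>0$. By homogeneity, every $x\in\Gamma\setminus\{0\}$ then has the ball $B_{c_0|x|}(x)$ inside $\Gamma'$. If moreover $|x|<\epsilon'/(1+c_0)$, this ball also lies in $B_{\epsilon'}$. Since $\Gamma'\cap B_{\epsilon'}\setminus\{0\}\subset\Omega$, and the ball misses $0$ when $c_0<1$ (which I may assume), the ball lies in $\Omega$. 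Hence $\dist(x,\pa\Omega)\geq c_0|x|$. This gives the lemma with $c=\min(c_0,1/2)$ and $\epsilon$ replaced by $\epsilon'/(1+c_0)$, after intersecting with the original $\epsilon$.

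The main obstacle is the construction of $\Gamma'$ with the uniform containment $\Gamma'\cap B_{\epsilon'}\setminus\{0\}\subset\Omega$. Naively, the hypothesis only gives $\Gamma\cap B_\epsilon\setminus\{0\}\subset\Omega$, which allows $\Gamma$ to touch $\pa\Omega$ tangentially along rays. The point is to use the compactness of a transverse slice strictly inside $\Omega$ and then the convexity of $\Omega$ to propagate that containment down to the apex.
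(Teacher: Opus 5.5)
Your proof is correct and follows essentially the same route as the paper. The paper also enlarges $\Gamma$ to a proper cone $\Gamma'$ with $\Gamma'\cap B_{\epsilon}\setminus\{0\}\subset\Omega$ and then uses the homogeneity/compactness bound of Lemma \ref{lemma_twocones}; you additionally spell out how $\Gamma'$ is built (a neighborhood $K'$ of the slice $H\cap\Gamma$, plus the open-segment property of convex sets) and prove the cone comparison inline on the unit-sphere slice.
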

\begin{proof}
	Since $0\in\pa\Omega$ and $|x|=\dist(0,x)$, the second ``$\leq$'' follows immediately from the definition of $\dist(x,\pa\Omega)$. We proceed to show the first ``$\leq$''. As in the proof of Proposition \ref{prop_d0}, we may slightly enlarge the cone $\Gamma$ to obtain a proper convex cone $\Gamma'\subset\R^n$ whose interior contains $\Gamma\setminus\{0\}$, such that $\Gamma'\cap B_\epsilon\setminus\{0\}\subset\Omega$ for some $\epsilon$. 
	After taking a smaller $\epsilon$ if necessary, it holds that 
	$$
	\dist(x,\pa\Gamma')\leq\dist(x,\pa\Omega)\ \ \text{ for all }x\in\Gamma'\cap B_\epsilon\setminus\{0\}\subset\Omega.
	$$
	In view of the next lemma, we obtain the required inequality. 
\end{proof}
\begin{lemma}\label{lemma_twocones}
	Let $\Gamma$ and $\Gamma'$ be convex cones in $\R^n$ such that  $\Gamma$ is proper and $\Gamma\setminus\{0\}$ is contained in the interior of $\Gamma'$. Then there exists a constant $c>0$ such that
	$\dist(x,\pa\Gamma')\geq c|x|$ for all $x\in\Gamma$.
\end{lemma}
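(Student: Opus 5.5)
The plan is a homogeneity-and-compactness argument. The function $\phi(x):=\dist(x,\pa\Gamma')$ is positively $1$-homogeneous on $\Gamma'$: since $\Gamma'$ is invariant under positive scaling, so is $\pa\Gamma'$, and hence $\dist(tx,\pa\Gamma')=t\,\dist(x,\pa\Gamma')$ for $t>0$. It therefore suffices to bound $\phi$ from below on the unit sphere, i.e.\ to show that
$$
c:=\min\{\phi(x)\mid x\in\Gamma,\ |x|=1\}>0.
$$
For $x=0$ the inequality $\dist(0,\pa\Gamma')\geq 0$ is trivial. For $x\in\Gamma\setminus\{0\}$, writing $x=|x|\cdot (x/|x|)$ with $x/|x|\in\Gamma$ then gives $\phi(x)=|x|\,\phi(x/|x|)\geq c|x|$.

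To establish $c>0$, I would use the following facts in order.
\begin{enumerate}[label=(\roman*)]
\item The set $S:=\Gamma\cap\{|x|=1\}$ is compact, because $\Gamma$ is closed. It is also nonempty, since $\Gamma$ has nonempty interior.
\item The function $\phi$ is $1$-Lipschitz, hence continuous, so it attains its minimum on $S$ at some $x_1\in S$.
\item Since $x_1\neq 0$, the hypothesis gives $x_1\in\Gamma'^\circ$. As $\Gamma'$ is closed, $\pa\Gamma'$ is closed and disjoint from the open set $\Gamma'^\circ$.
\item Consequently $\phi(x_1)=\dist(x_1,\pa\Gamma')>0$. Here $\pa\Gamma'\neq\varnothing$ unless $\Gamma'=\R^n$; in that case $\dist(\cdot,\varnothing)=+\infty$ and the statement is vacuous, or one simply takes any $c$.
\end{enumerate}

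There is no real obstacle. The only point needing care is that $\dist(x,\pa\Gamma')$ equals the distance from $x$ to the complement of $\Gamma'$ for $x\in\Gamma'$, but this is not even needed: the argument above uses only that $\pa\Gamma'$ is a closed cone not containing $x_1$. Properness of $\Gamma$ is not required for this argument; it is just part of the setting inherited from the paper.
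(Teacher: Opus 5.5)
Your proof is correct and is essentially the paper's argument: both use $1$-homogeneity of $\dist(\,\cdot\,,\pa\Gamma')$ together with compactness of a cross-section of $\Gamma$ on which this distance is positive. The only difference is that the paper normalizes on a compact hyperplane section $H\cap\Gamma$, whose existence uses properness of $\Gamma$, while you normalize on $\Gamma\cap\{|x|=1\}$, so your remark that properness is not needed is right.
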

\begin{proof}
	$\dist(x,\pa\Gamma')$ and $|x|$ are both $1$-homogeneous continuous functions of $x\in\Gamma'$, so the ratio $\dist(x,\pa\Gamma')/|x|$ is continuous in $x\in\Gamma'\setminus\{0\}$ and invariant under scaling. Consider an affine hyperplane $H\subset\R^n\setminus\{0\}$ such that $H\cap\Gamma$ is nonempty and compact. The ratio is positive for $x\in H\cap \Gamma$, hence has a positive lower bound $c$ on $H\cap\Gamma$. That is, the required inequality holds for $x\in H\cap\Gamma$. Since any point of $\Gamma\setminus\{0\}$ is a positive multiple of a point in $H\cap \Gamma$, it holds for all $x\in\Gamma$.
\end{proof}

\begin{proof}[Proof of Theorem \ref{thm_A} \ref{item_thmA1} for $d=0$]
By translation, we may assume that the boundary point ($0$-dimensional flat boundary piece) in question is the origin $0$. Given $\alpha\leq 2d-n=-n$ and a convex function $u\in C(\overline{\Omega})$ which vanishes at $0\in\pa\Omega$ and satisfies one of the three conditions
\begin{align}
&\det\D^2u(x)\geq M\,\dist(x,0)^\alpha=M|x|^\alpha\ \text{ for all }x\in\Omega,\label{eqn_infiniteproof1}\\
&\det\D^2u(x)\geq M\dist(x,\pa\Omega)^\alpha\ \text{ for all }x\in\Omega,\label{eqn_infiniteproof2}\\
&u(x)<0\ \text{ and }\ \det \D^2u\geq M(-u(x))^\alpha\ \text{ for all }x\in\Omega, \label{eqn_infiniteproof3}
\end{align}
we shall show that $u$ has infinite slope at $0$. 

Suppose by contradiction that $u$ has finite slope at $0$. Then we may pick a proper convex cone $\Gamma\subset\R^n$ such that the assumptions of Proposition \ref{prop_d0} hold. The proposition and the stability of Monge-Amp\`ere measure (Lemma \ref{lemma_stability}) yield the following weak convergence of measures\footnote{This convergence should be understood in a sense similar to Remark \ref{rk_domain}, i.e., for any Borel set $B\subset\R^n$ such that $\overline{B}$ is compact and contained in $\Gamma^\circ$, the domain of definition of $\mathcal{M}u_\lambda$ contains $B$ when $\lambda$ is sufficiently large, and $\lim_{\lambda\to+\infty}\mathcal{M}u_\lambda(B)=0$.} 
in the interior $\Gamma^\circ$ of $\Gamma$
\begin{equation}\label{eqn_infiniteproof4}
\mathcal{M}u_\lambda  \rightharpoonup \mathcal{M}u_\infty=0\ \ \text{ as }\lambda\to+\infty,
\end{equation}
where $\mathcal{M}u_\infty$ is the zero measure on $\Gamma^\circ$ because $u_\infty$ is $1$-homogeneous (see Remark \ref{rk_homogeneity}).

We claim that under any of the conditions \eqref{eqn_infiniteproof1}--\eqref{eqn_infiniteproof3}, the density $\det\D^2u_\lambda$ of $\mathcal{M}u_\lambda$ is bounded from below by a positive function not depending on $\lambda$. This contradicts \eqref{eqn_infiniteproof4} and completes the proof.

To prove the claim, first note that
$$
\det\D^2u_\lambda(x)=\lambda^{-2n}\lambda^n(\det\D^2u)(\lambda^{-1}x)=\lambda^{-n}(\det\D^2u)(\lambda^{-1}x),
$$
where the factors $\lambda^{-2n}$ and  $\lambda^n$ come from the scalings of the variable and function values, respectively\footnote{
Strictly speaking, the equalities make sense only when $u$ is $C^2$; otherwise, a suitable reinterpretation via measures is required. We leave this reinterpretation to the reader.
}, in the definition \eqref{eqn_propd0} of $u_\lambda$. 

Now, under condition \eqref{eqn_infiniteproof1}, since $\alpha\leq -n$, we have
\begin{equation}\label{eqn_proofpropd0.5}
\det\D^2u_\lambda(x)=\lambda^{-n}(\det\D^2u)(\lambda^{-1}x)\geq \lambda^{-n}M|\lambda^{-1}x|^{\alpha}=M\lambda^{-n-\alpha}|x|^\alpha\geq M|x|^\alpha
\end{equation}
for any $x\in\Gamma$ and sufficiently large $\lambda\geq1$, as required.

Under condition \eqref{eqn_infiniteproof2}, estimate \eqref{eqn_proofpropd0.5} is still valid and yields the desired conclusion.
Indeed, this estimate only requires \eqref{eqn_infiniteproof1} to hold for all $x$ in a region of the form $\Gamma\cap B_\epsilon\setminus\{0\}\subset\Omega$ rather than on the whole $\Omega$, whereas Lemma \ref{lemma_twodistances} implies that the restrictions of \eqref{eqn_infiniteproof1} and \eqref{eqn_infiniteproof2} to such regions are equivalent to each other (more precisely, there exist $M,\epsilon>0$ such that \eqref{eqn_infiniteproof1} holds on $\Gamma\cap B_\epsilon\setminus\{0\}\subset\Omega$ if and only if the same is true for \eqref{eqn_infiniteproof2}, with possibly different $M$ and $\epsilon$).

Finally, under condition \eqref{eqn_infiniteproof3}, the situation is similar: we note that 
\begin{equation}\label{eqn_proofpropd02}
-C_1|x|\leq u(x)\leq -C_2|x|\ \ \text{ for all }x\in\Gamma\cap B_\epsilon\setminus\{0\}\subset\Omega
\end{equation}
for constants $C_1> C_2>0$, where the first ``$\leq$'' follows from the contradiction assumption 	``$u$ has finite slope at $0$'', which means there exists a subgradient $a\in\pa u(0)$, and hence $u(x)\geq a\cdot x\geq -|a|\,|x|$;
the second ``$\leq$'' follows from the condition $u<0$ in \eqref{eqn_infiniteproof3} and the convexity of $u$. As a consequence, the restriction of condition \eqref{eqn_infiniteproof3} to $\Gamma\cap B_\epsilon\setminus\{0\}$ is also equivalent to that of \eqref{eqn_infiniteproof1}, assuming that $u$ has finite slope at $0$. Therefore, we can use \eqref{eqn_proofpropd0.5} again to complete the proof of the claim.
\end{proof}

\begin{remark}[Shortcuts from the sign of $\alpha$]\label{rk_shortcut}
	The negative sign of $\alpha\leq -n$ in the above proof implies some obvious shortcuts:
	\begin{itemize}
		\item Condition \eqref{eqn_infiniteproof2} is plainly stronger than \eqref{eqn_infiniteproof1}, as it holds for all $x\in\Omega$ that $\dist(x,\pa\Omega)\leq \dist(x,0)$, and hence $\dist(x,\pa\Omega)^\alpha\geq |x|^\alpha$. Therefore, the use of Lemma \ref{lemma_twodistances} in the proof was unnecessary.
		\item Similarly, the second ``$\leq$'' in \eqref{eqn_proofpropd02} was unnecessary.
	\end{itemize}
	However, we chose not to take these shortcuts, so that the proof can be generalized directly to all $d$ in the next subsection, where $\alpha\leq 2d-n$ does not have a definite sign.
\end{remark}

\subsection{The case of general $d$}\label{subsec_generald} 
To generalize the previous section from a boundary point to a flat boundary piece $F\subset\pa\Omega$ of any dimension $0\leq d\leq n-1$, we shall dilate $u$ only in the directions transverse to $F$, and make use of the convergence of the dilated function $u_\lambda$ toward the transverse directional derivatives, as formulated in the following proposition.

\begin{proposition}[Directional derivatives transverse to a flat boundary piece]\label{prop_generald}
	Let $\Omega$ be a bounded convex domain in $\R^n=\R^d\times\R^{n-d}$, $u\in C(\overline{\Omega})$ be a convex function, and $\Gamma_0$ be a proper convex cone in $\R^{n-d}$. Assume that 
	\begin{itemize}
		\item The boundary $\pa\Omega$ contains $U\times\{0\}\subset\R^d\times\R^{n-d}$ for a convex domain $U$ in $\R^d$ containing the origin.
		\item There exists $\epsilon>0$ such that
		$$
		U\times \big(\Gamma_0\cap B_\epsilon^{n-d}\setminus\{0\}\big)\subset \Omega.
		$$
		\item
		$u=0$ on $U\times\{0\}$.
		\item
		$u$ has finite slope at $(0,0)$.
	\end{itemize}
Consider the family $(u_\lambda)_{\lambda\geq1}$ of convex functions defined by
\begin{equation}\label{eqn_ulambda2}
u_\lambda:U\times\big(\Gamma_0\cap B^{n-d}_{\lambda\epsilon}\big)\to\R,\quad u_\lambda(y,z):=\lambda u(y,\lambda^{-1}z).
\end{equation}
Then there exists a $1$-homogeneous continuous convex function $h$ on $\Gamma_0$ such that 
$u_\lambda$ converges to the function
$$
u_\infty: U\times \Gamma_0\to\R,\quad u_\infty(y,z)=h(z)
$$ uniformly on compact subsets of $U\times\Gamma_0$ as $\lambda\to+\infty$.
\end{proposition}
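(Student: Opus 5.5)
The plan is to follow the proof of Proposition \ref{prop_d0}, taking care with the extra variable $y$. First, monotonicity in $\lambda$: for fixed $(y,z)\in U\times\Gamma_0$ and $1\leq\lambda_1<\lambda_2$, the point $(y,\lambda_2^{-1}z)$ is the convex combination $(1-\tfrac{\lambda_1}{\lambda_2})(y,0)+\tfrac{\lambda_1}{\lambda_2}(y,\lambda_1^{-1}z)$. Since $u(y,0)=0$, convexity gives $u_{\lambda_2}(y,z)\leq u_{\lambda_1}(y,z)$, so $u_\lambda$ is non-increasing in $\lambda$. For a lower bound, I will take a subgradient $a=(a',a'')\in\pa u(0,0)$. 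By the discussion following Definition \ref{def_slopes}, the affine function $x\mapsto a\cdot x$ vanishes on $L=\R^d\times\{0\}$, so $a'=0$ and $u(y,z)\geq a''\cdot z$. Hence $u_\lambda(y,z)\geq a''\cdot z$, independently of $\lambda$, and the pointwise limit $u_\infty(y,z):=\lim_{\lambda\to\infty} u_\lambda(y,z)$ exists and is finite. Scaling the $z$-variable directly shows $u_\infty(y,tz)=tu_\infty(y,z)$ for $t>0$.

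Next I will show that $u_\infty$ does not depend on $y$, which is the key new point. For fixed $z$, the function $y\mapsto u_\infty(y,z)$ is a pointwise limit of convex functions, hence convex on $U$. It is also bounded: from below by $a''\cdot z$, and from above by $u_{\lambda_0}(y,z)$, which is bounded on any compact set where it is defined. Now use homogeneity. For $y_1,y_2\in U$ and $t\in(0,1)$, convexity of $u_\lambda$ applied to the combination $(1-t)(y_1,0)+t(y_2,t^{-1}z)=((1-t)y_1+ty_2,z)$ gives
$$
u_\lambda\big((1-t)y_1+ty_2,z\big)\leq t\,u_\lambda(y_2,t^{-1}z)=u_{\lambda t}(y_2,z),
$$
where the equality is exact by the definition of $u_\lambda$. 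Letting $\lambda\to\infty$ gives $u_\infty((1-t)y_1+ty_2,z)\leq u_\infty(y_2,z)$, provided $t^{-1}z$ lies in the domain (it does for $\lambda$ large). Now fix $y,y_2\in U$ and choose $y_1$ slightly beyond $y$ on the line from $y_2$, so that $y=(1-t)y_1+ty_2$ with $t>0$ small; this is possible because $U$ is open. We get $u_\infty(y,z)\leq u_\infty(y_2,z)$. Swapping the roles of $y$ and $y_2$ gives equality, so $u_\infty(y,z)=h(z)$, and $h$ is $1$-homogeneous and convex on $\Gamma_0^\circ$. I expect this step to be the main obstacle: the argument is easy to state, but the domain bookkeeping (that $t^{-1}z$ is admissible, and the handling of $z\in\pa\Gamma_0$) needs care.

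Finally, uniform convergence and continuity. As in the proof of Proposition \ref{prop_d0}, I will enlarge $\Gamma_0$ to a proper cone $\Gamma_0'$ containing $\Gamma_0\setminus\{0\}$ in its interior, with $U'\times(\Gamma_0'\cap B_{\epsilon'}^{n-d}\setminus\{0\})\subset\Omega$ for a slightly smaller convex domain $U'\Subset U$ containing $0$. This uses compactness and the fact that $\Omega$ is open convex with $U\times\{0\}\subset\pa\Omega$; it may require replacing $U$ by $U'$ and exhausting $U$ afterwards. All of the above holds on $U'\times\Gamma_0'$. Lemma \ref{lemma_uniform} then gives local uniform convergence on the open set $U'\times\Gamma_0'^\circ$, which contains every compact $K\subset U'\times(\Gamma_0\setminus\{0\})$. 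In particular $h$ is continuous on $\Gamma_0\setminus\{0\}$, and it is continuous at $0$ by homogeneity and boundedness on a compact cross-section. Near $z=0$, I will reuse the monotonicity trick from Proposition \ref{prop_d0}. For fixed $y$, the function $t\mapsto u_\lambda(y,tz)-u_\infty(y,tz)$ is nonnegative, convex in $t\in[0,1]$ (since $u_\infty$ is linear along rays), and vanishes at $t=0$. It is therefore non-decreasing, so uniform convergence on $K$ propagates to $\{(y,tz):t\in[0,1],(y,z)\in K\}$. Since every compact subset of $U\times\Gamma_0$ lies in such a set for some $U'$, this yields the claimed uniform convergence.
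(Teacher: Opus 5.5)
Your proposal is correct and follows essentially the paper's route. Both prove monotone convergence in $\lambda$ and $1$-homogeneity, get uniform convergence from cone enlargement, Lemma \ref{lemma_uniform} and the radial monotonicity trick, and get $y$-independence from convexity along the segment joining $(y_1,0)$ to $(y_2,\cdot)$. You also supply details the paper omits: the lower bound $u\geq a''\cdot z$, and the need to shrink $U$ to $U'\Subset U$ before enlarging $\Gamma_0$.

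The only real difference is in the $y$-independence step. You derive the segment inequality for $u_\lambda$ via $t\,u_\lambda(y_2,t^{-1}z)=u_{\lambda t}(y_2,z)$, then use openness of $U$ to place $y$ strictly between $y_1$ and $y_2$. This avoids the paper's limit $t\to0^+$ and hence any appeal to continuity of $u_\infty$ in $y$.
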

As in Proposition \ref{prop_d0}, the function $u_\lambda$ is defined on a domain that expands as $\lambda$ increases; its uniform convergence on compact subsets should be understood in a sense similar to Remark \ref{rk_domain}.
\begin{proof}
The arguments in the proof of Proposition \ref{prop_d0} can be generalized to show that $u_\lambda\to u_\infty$ uniformly on compact subsets of $U\times \Gamma_0$, where $u_\infty(y,z)$ is a continuous convex function of $(y,z)\in U\times\Gamma_0$ which is $1$-homogeneous in $z$. We omit the details. The proof is completed by noting that any such function is independent of $y$, namely $u_\infty(y_1,z)=u_\infty(y_2,z)$ for all $y_1,y_2\in U$ and $z\in\Gamma_0$. To verify this, we use the convexity along the line segment joining $(y_1,0)$ and $(y_2,z)$ to get
	$$
	u_\infty((1-t)y_1+ty_2,tz)\leq (1-t)u_\infty(y_1,0)+t\hspace{1pt}u_\infty(y_2,z)
	$$
	for $t\in(0,1)$. Since $u_\infty(y,z)$ is $1$-homogeneous in $z$ (in particular, $u_\infty(y,0)=0$), by dividing both sides by $t$, it follows that
	$u_\infty((1-t)y_1+ty_2,z)\leq u_\infty(y_2,z)$.
	Taking the limit $t\to0^+$, we get $u_\infty(y_1,z)\leq u_\infty(y_2,z)$. The desired assertion follows by switching $y_1$ and $y_2$.
\end{proof}

\begin{figure}[ht]
	\centering
	\includegraphics[width=0.44\textwidth]{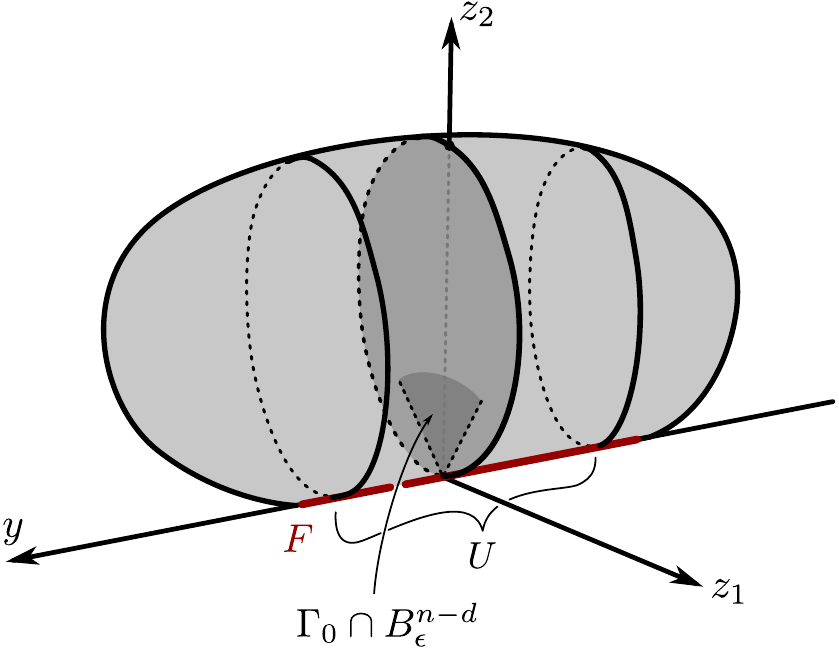}
	\caption{The setup for the proof of Theorem \ref{thm_A} \ref{item_thmA1}.}
	\label{fig_infiniteslopeproof}
\end{figure}

\begin{proof}[Proof of Theorem \ref{thm_A} \ref{item_thmA1} for general $d$]  
After a change of coordinates, we may assume that $L=\R^d\times\{0\}$ and that the flat boundary piece $F=\pa\Omega\cap L$ contains $U'\times\{0\}$ for a neighborhood $U'$ of the origin in $\R^d$. We claim that there exist a smaller neighborhood $U\subset U'$ of the origin in $\R^d$, a proper convex cone $\Gamma_0$ in $\R^{n-d}$, and $\epsilon>0$, such that 
$$
U\times(\Gamma_0\cap B^{n-d}_\epsilon\setminus\{0\})\subset\Omega.
$$ 
See Figure \ref{fig_infiniteslopeproof}. Indeed, the $d=0$ case of the claim  amounts to the obvious fact that if $0\in\pa\Omega$, then $\Omega$ contains $\Gamma\cap B_\epsilon\setminus\{0\}$ for some proper convex cone $\Gamma\subset\R^n$ and some $\epsilon>0$. For $d>0$, we first apply this fact to the convex domain $\Omega\cap\big(\{0\}\times\R^{n-d}\big)$ in $\{0\}\times\R^{n-d}$, obtaining a proper convex cone $\Gamma_0\subset\R^{n-d}$ such that the slice $\{0\}\times\big(\Gamma_0\cap B^{n-d}_\epsilon\setminus\{0\}\big)$ is contained in $\Omega$. Then we can use the compactness and the convexity to expand this slice into a region $U\times(\Gamma_0\cap B^{n-d}_\epsilon\setminus\{0\})$ which is still contained in $\Omega$, establishing the claim.

The rest of the proof is a straightforward generalization of the $d=0$ case in \S \ref{subsec_d0}, so we only give an outline. Suppose by contradiction that $u$ has finite slope at $F$. Then Proposition \ref{prop_generald} applies and yields the weak convergence 
$$
	\mathcal{M}u_\lambda  \rightharpoonup \mathcal{M}u_\infty=0\ \ \text{ as }\lambda\to+\infty.
$$
Here, $\mathcal{M}u_\infty$ is the zero measure on $U\times\Gamma_0$ like in the $d=0$ case because when $d>0$, the subgradients of $u_\infty(y,z)=h(z)$ are contained in the null set $\{0\}\times\R^{n-d}\subset\R^n$.  Again, we shall reach a contradiction by showing that if any of the conditions
\begin{align}
	&\det\D^2u(y,z)\geq M|z|^\alpha,\label{eqn_generaldproof1}\\
	&\det\D^2u\geq M\dist(\,\cdot\,,\pa\Omega)^\alpha,\label{eqn_generaldproof2}\\
	&u<0\ \text{ and }\ \det \D^2u\geq M(-u)^\alpha\label{eqn_generaldproof3}
\end{align}
holds in $U\times\big(\Gamma_0\cap B_\epsilon^{n-d}\setminus\{0\}\big)\subset\Omega$ with $\alpha\leq 2d-n$, then $\det\D^2u_\lambda$ is bounded from below by a positive function not depending on $\lambda$. 

To this end, note that by the definition \eqref{eqn_ulambda2} of $u_\lambda$, we have
$$
\det\D^2u_\lambda(y,z)=\lambda^{-2(n-d)}\lambda^n(\det\D^2u)(y,\lambda^{-1}z)=\lambda^{2d-n}(\det\D^2u)(y,\lambda^{-1}z).
$$
Therefore, condition \eqref{eqn_generaldproof1} implies 
$$
\det\D^2u_\lambda(y,z)=\lambda^{2d-n}(\det\D^2u)(y,\lambda^{-1}z)\geq \lambda^{2d-n}M|\lambda^{-1}z|^{\alpha}=M\lambda^{2d-n-\alpha}|z|^\alpha\geq M|z|^\alpha,
$$
as required. This estimate remains valid under either condition  \eqref{eqn_generaldproof2} or condition \eqref{eqn_generaldproof3}: the former is because, as a generalization of Lemma \ref{lemma_twodistances}, there exist $c,\epsilon>0$ such that
$$
c|z|\leq \dist((y,z),\pa\Omega)\leq |z|\ \ \text{ for all } (y,z)\in U\times\big(\Gamma_0\cap B_\epsilon^{n-d}\setminus\{0\}\big)\subset\Omega;
$$
the latter is because, as a generalization of \eqref{eqn_proofpropd02}, there exist $C_1>C_2>0$ such that 
$$
-C_1|z|\leq u(y,z)\leq -C_2|z|\ \ \text{ for all } (y,z)\in U\times\big(\Gamma_0\cap B_\epsilon^{n-d}\setminus\{0\}\big)\subset\Omega.
$$
This completes the proof.
\end{proof}

\section{Non-existence result: subgradient estimate}\label{sec_nonexistence}
 
We prove Theorem \ref{thm_A} \ref{item_thmA2} in this section. The proof is  based on the following lemma about the Monge-Amp\`ere measure of a convex function defined on a convex cone near the origin.
Recall that $B^m_r$ denotes the open ball of radius $r$ in $\R^m$ centered at the origin.
\begin{lemma}\label{lemma_super}
	Let $\Gamma_0$ be a proper convex cone in $\R^{n-d}$ and $u$ be a continuous convex function defined on the region
	$$
	D:=B^d_1\times ( \Gamma_0\cap B^{n-d}_1)\subset\R^d\times\R^{n-d}=\R^n,
	$$ 
    which is a neighborhood of the origin in the product cone $\Gamma=\R^d\times\Gamma_0$.  Let $K$ be a compact set in the interior $D^\circ=B^d_1\times(\Gamma_0^\circ\cap B^{n-d}_1)$ of $D$. Then, when $K$ undergoes the transformations
    $$
    \sigma_t:\R^d\times\R^{n-d}\to\R^d\times\R^{n-d},\quad \sigma_t(y,z):=(y,tz),\ \ 0<t<1,
    $$
   its mass with respect to the Monge-Amp\`ere measure $\mathcal{M}u$ of $u$ satisfies
	$$
	\mathcal{M}u(\sigma_t(K))=o(t^{d-n})\ \ \text{ as } t\to0^+.
	$$
\end{lemma}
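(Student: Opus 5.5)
The idea is to undo the transverse shrinking by a rescaling, as in Proposition \ref{prop_generald}. The rescaled functions converge to a function independent of $z$, whose Monge-Amp\`ere measure vanishes; the factor $t^{d-n}$ is exactly the Jacobian of the rescaling on the gradient side.

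\textbf{Rescaling.} For $0<t<1$ define
$$
v_t:\sigma_t^{-1}(D)=B^d_1\times\big(\Gamma_0\cap B^{n-d}_{1/t}\big)\to\R,\qquad v_t(y,z):=u(y,tz)=u(\sigma_t(y,z)).
$$
Since $\sigma_t$ is a linear bijection from $\sigma_t^{-1}(D)$ onto $D$, the definition \eqref{eqn_definesubgradient} gives
$$
\pa v_t(y,z)=\big\{(b,tc)\,\big|\,(b,c)\in\pa u(y,tz)\big\},
$$
that is, $\pa v_t(x)=\tau_t\big(\pa u(\sigma_t x)\big)$ with $\tau_t(b,c):=(b,tc)$. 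Because $\det\tau_t=t^{n-d}$, we get the exact identity
$$
\mathcal{M}u(\sigma_t(K))=|\pa u(\sigma_t K)|=t^{d-n}\,|\pa v_t(K)|=t^{d-n}\,\mathcal{M}v_t(K).
$$
It therefore suffices to show that $\mathcal{M}v_t(K)\to0$ as $t\to0^+$.

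\textbf{Limit function.} Since $K$ is compact in $D^\circ$, we can fix $r<1$ and $R>0$ with $K\subset W:=B^d_r\times(\Gamma_0^\circ\cap B^{n-d}_R)$, and $W$ lies in the domain of $v_t$ for all small $t$. The set $\overline{B^d_r}\times(\Gamma_0\cap \overline{B^{n-d}_{1/2}})$ is a compact subset of $D$, so $u$ is uniformly continuous on it. Hence $v_t(y,z)=u(y,tz)\to w(y,z):=u(y,0)$ uniformly on $W$ as $t\to0^+$. The limit $w$ is convex and independent of $z$, so every subgradient of $w$ at a point of $W$ lies in $\R^d\times\{0\}$: if the $c$-component were nonzero, moving in $z$ within the open set $W$ would violate the subgradient inequality. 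Since $\R^d\times\{0\}$ is a null set in $\R^n$ (as $n-d\geq1$), we get $\mathcal{M}w=0$ on $W$. This is the same argument as in the proof of Theorem \ref{thm_A} \ref{item_thmA1}.

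\textbf{Conclusion.} Apply Lemma \ref{lemma_stability} to the restrictions $v_t|_W$ along any sequence $t_k\to0^+$. It gives $\mathcal{M}(v_t|_W)\rightharpoonup \mathcal{M}w=0$ weakly on $W$. For weak convergence of measures, the mass of a compact set is upper semicontinuous, so
$$
\limsup_{t\to0^+}\mathcal{M}(v_t|_W)(K)\leq\mathcal{M}w(K)=0.
$$
Restricting a convex function to a smaller domain only enlarges its subgradient sets, so $\mathcal{M}v_t(K)\leq\mathcal{M}(v_t|_W)(K)\to0$. Combined with the identity from the rescaling step, this gives $\mathcal{M}u(\sigma_t(K))=o(t^{d-n})$.

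\textbf{Main obstacle.} The delicate point is the bookkeeping between subgradients on different domains. The exact Jacobian identity uses $v_t$ on its full domain $\sigma_t^{-1}(D)$, whereas the stability lemma is applied on the fixed set $W$. The inequality between these two points in the right direction, which makes the argument work. A second point to check is the uniform convergence up to $z\to0$: it needs continuity of $u$ on the part of $D$ where $z\in\pa\Gamma_0$, including $z=0$, which is guaranteed because $D$ contains those points and $u$ is continuous on $D$.
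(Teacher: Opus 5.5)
Your proof is correct, but it takes a different route from the paper's. The paper works directly from the definition of $\mathcal{M}u$. It enlarges $K$ to a product $E\times F$, then tests the subgradient inequality at $(y\pm\delta e_i,z)$ and $(y,z\pm t\delta f_j)$ for $(y,z)\in E\times tF$. This gives $|a_i|\leq C/\delta$ and $|b_j|\leq c(t)/(t\delta)$, where $c(t)$ is twice the supremum of $|u(y,z)-u(y,0)|$ over $|y|\leq 1-\delta$, $z\in\Gamma_0$, $|z|\leq t(1-\delta)$. Continuity of $u$ along $L$ gives $c(t)\to0$. Hence $\pa u(\sigma_t(K))$ lies in an explicit box of volume $(2C/\delta)^d\big(2c(t)/(t\delta)\big)^{n-d}$.

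Your argument instead isolates the factor $t^{d-n}$ as the exact Jacobian identity $\mathcal{M}u(\sigma_t(K))=t^{d-n}\,\mathcal{M}(u\circ\sigma_t)(K)$. You then get the $o(1)$ softly from Lemma \ref{lemma_stability}: $u\circ\sigma_t\to u(y,0)$ locally uniformly, and a $z$-independent convex function has null Monge--Amp\`ere measure.

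What your route buys is structure. It makes the proof of Theorem \ref{thm_A} \ref{item_thmA2} parallel to the blow-up argument of \S\ref{sec_infinite}. Since you rescale only the domain and not the function values, no finite-slope assumption is needed for the limit to exist.

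What the paper's route buys is that it is self-contained, uses no weak-convergence machinery, and is quantitative. It gives the explicit bound $\mathcal{M}u(\sigma_t(K))\lesssim t^{d-n}c(t)^{n-d}$ in terms of the modulus of continuity of $u$ transverse to $L$.

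The two arguments are closely related. Transporting the paper's box by your map $\tau_t$ gives $\pa v_t(K)\subset[-C/\delta,C/\delta]^d\times[-c(t)/\delta,\,c(t)/\delta]^{n-d}$. That is precisely a quantitative form of your stability step.
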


\begin{figure}[ht]
	\centering
	\includegraphics[width=1.07\textwidth]{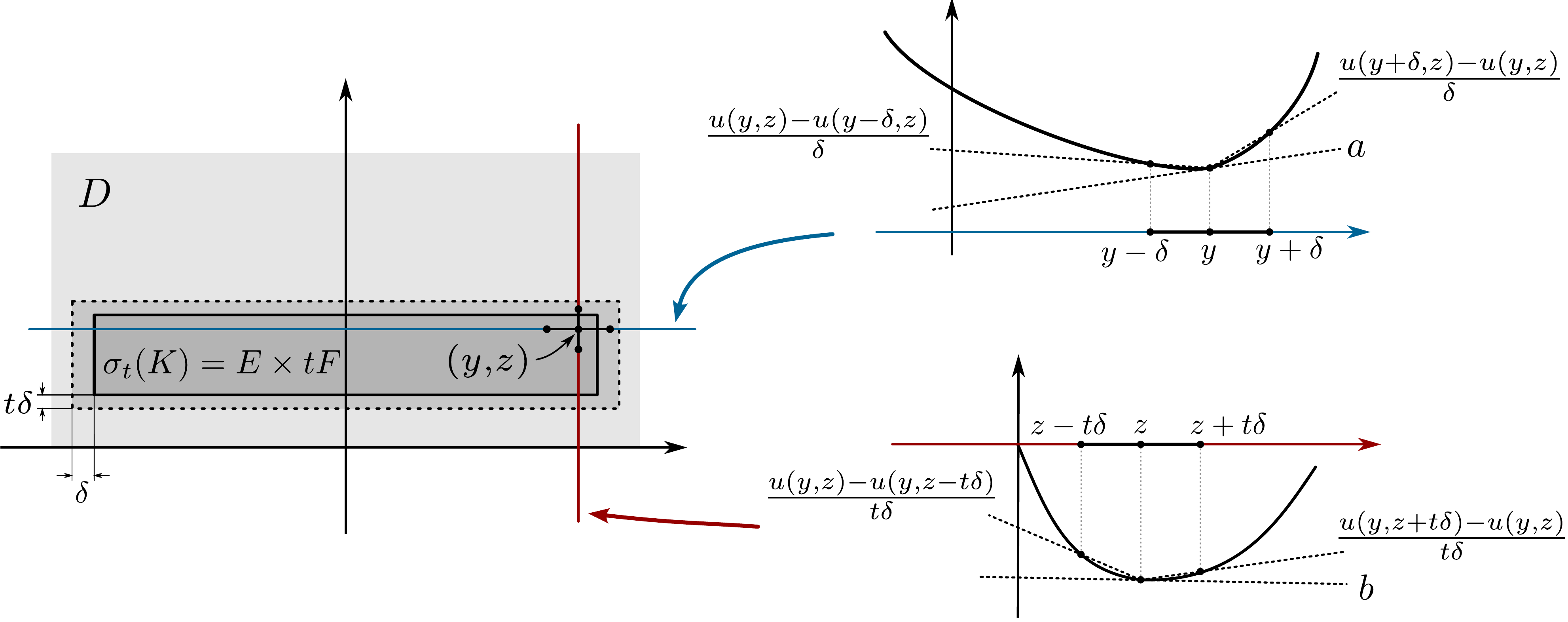}
	\caption{Illustration of the proof of Lemma \ref{lemma_super} when $d=n-d=1$ and $\Gamma_0=\R_{\geq0}$. Shown on the right are the section views of the graph of $u$ along the horizontal and vertical lines passing through $(y,z)\in\sigma_t(K)=E\times tF$. The quantities marked beside the dashed lines are the slopes. The argument consists in bounding each of the components $a$ and $b$ of a subgradient $(a,b)\in \pa u(y,z)$ by two slopes.}
	\label{fig_subgradient}
\end{figure}
\begin{proof}[Proof]
Since enlarging $K$ does not affect the statement, we may assume that $K$ has a product form
$$
K=E\times F,\quad E\subset B_1^d,\ \  F\subset\Gamma_0^\circ\cap B_1^{n-d},
$$
with $E$ and $F$ compact. By compactness, we may pick a sufficiently small $\delta>0$ such that
\begin{equation}\label{eqn_subgradientproof0}
\widetilde{E}:=\big\{y\in\R^d\,\big|\,\dist(y,E)\leq \delta\}\subset B_{1-\delta}^d,\quad \widetilde{F}:=\big\{z\in\R^{n-d}\,\big|\,\dist(z,F)\leq \delta\}\subset\Gamma_0^\circ\cap B_{1-\delta}^{n-d}.
\end{equation}
Note that the last inclusion implies 
\begin{equation}\label{eqn_subgradientproof0.5}
t\widetilde{F}\subset \Gamma_0^\circ\cap B^{n-d}_{t(1-\delta)} \quad \text{ for all }0<t<1.
\end{equation}

Given a point $(y,z)\in \sigma_t(K)=E\times tF$, the subgradients $(a,b)\in\pa u(y,z)$ of $u$ at $(y,z)$ are characterized by the condition
\begin{equation}\label{eqn_subgradientproof1}
	u(y',z')\geq u(y,z)+a\cdot (y'-y)+b\cdot(z'-z)\ \  \text{ for all } (y',z')\in D.
\end{equation}
We proceed to estimate each component of $a=(a_1,\cdots,a_d)$ and $b=(b_1,\cdots,b_{n-d})$. Let
$$
e_1=(1,0,\cdots,0),\cdots, e_d=(0,\cdots,0,1)\in\R^d,\quad f_1=(1,0,\cdots,0),\cdots, f_{n-d}=(0,\cdots,0,1)\in\R^{n-d}
$$
be the standard bases of $\R^d$ and $\R^{n-d}$, respectively. Then 
\begin{itemize}
	\item Taking 
	$(y',z')=(y\pm\delta e_i,z)$ in \eqref{eqn_subgradientproof1}, we obtain
	$$
	\frac{u(y,z)-u(y-\delta e_i,z)}{\delta} \leq a_i\leq \frac{u(y+\delta e_i,z)-u(y,z)}{\delta}.
	$$
	Since $y\in E\subset\widetilde{E}$, $y\pm \delta e_i\in\widetilde{E}$ and $z\in tF\subset t\widetilde{F}$, in view of \eqref{eqn_subgradientproof0} and \eqref{eqn_subgradientproof0.5}, we deduce that
	\begin{equation}\label{eqn_subgradientproof2}
		|a_i|\leq \frac{C}{\delta},\quad i=1,\cdots,d\,,
	\end{equation}
	where $C$ is twice the $L^\infty$-norm of $u$ on $\overline{B}_{1-\delta}^d\times\big(\Gamma_0\cap\overline{B}_{1-\delta}^{n-d}\big)$.
	\item 
	Taking $(y',z')=(y,z\pm t\delta f_j)$ in \eqref{eqn_subgradientproof1},  we obtain
	$$
	\frac{u(y,z)-u(y,z-t\delta f_j)}{t\delta} \leq b_j\leq \frac{u(y,z+t\delta f_j)-u(y,z)}{t\delta}.
	$$
	Since $y\in E\subset\widetilde{E}$, $z\in tF\subset t\widetilde{F}$ and $z\pm t\delta f_j\in t\widetilde{F}$, in view of \eqref{eqn_subgradientproof0} and \eqref{eqn_subgradientproof0.5}, we deduce that
	\begin{equation}\label{eqn_subgradientproof3}
		|b_j|\leq \frac{c(t)}{t\delta},\quad j=1,\cdots,n-d\,,
	\end{equation}
	where
	$$
	c(t):=2\sup_{y\in \overline{B}_{1-\delta}^d,\, z\in\Gamma_0\cap\overline{B}_{t(1-\delta)}^{n-d}} |u(y,z)-u(y,0)|.
	$$
	Note that $c(t)\to 0$ as $t\to 0^+$: otherwise,  there exists a sequence $(y^{(k)},z^{(k)})_{k=1,2,\cdots}$ in $\overline{B}_{1-\delta}^d\times\Gamma_0$ such that $\lim_{k\to\infty}|z^{(k)}|=0$ whereas $|u(y^{(k)},z^{(k)})-u(y^{(k)},0)|$ is larger than a positive constant; then picking a subsequence of $(y^{(k)})$ converging to some $y^{(\infty)}\in\overline{B}_{1-\delta}^d$ leads to a contradiction with the continuity of $u$ at $(y^{(\infty)},0)$.
%		Note that $c(t)\to 0$ as $t\to 0^+$: otherwise,  there exists a sequence 
%	$$
%	(y^{(k)},z^{(k)})\in\overline{B}_{1-\delta}^d\times \Gamma_0,\quad k=1,2,\cdots
%	$$ 
%	such that 
%	$$
%	\lim_{k\to\infty}|z^{(k)}|=0,\quad\  |u(y^{(k)},z^{(k)})-u(y^{(k)},0)|>c_0>0\ \text{ for } k=1,2,\cdots;
%	$$ 
%	picking a subsequence of $y^{(k)}$ converging to some $y^{(\infty)}\in\overline{B}_{1-\delta}^d$ leads to a contradiction with the continuity of $u$ at $(y^{(\infty)},0)$.
\end{itemize}

Since $(y,z)\in\sigma_t(K)$ is arbitrary, estimates \eqref{eqn_subgradientproof2} and \eqref{eqn_subgradientproof3} imply
$$
\pa u\big(\sigma_t(K)\big)\subset \left[-\frac{C}{\delta}, \frac{C}{\delta}\right]^d\times\left[-\frac{c(t)}{t\delta},\frac{c(t)}{t\delta }\right]^{n-d}.
$$
Since $\mathcal{M}u(\sigma_t(K))$ is defined to be the volume of $\pa u\big(\sigma_t(K)\big)$ (see \S \ref{subsec_MA}), the desired conclusion follows.
\end{proof}

\begin{proof}[Proof of Theorem \ref{thm_A} \ref{item_thmA2}]
As explained in the first paragraph of the proof of Theorem \ref{thm_A} \ref{item_thmA1} in \S \ref{subsec_generald}, we may assume that $L=\R^d\times\{0\}$, that the flat boundary piece $F=\pa\Omega\cap L$ contains $U\times\{0\}$ for a neighborhood $U$ of the origin in $\R^d$, and that $\Omega$ contains $U\times(\Gamma_0\cap B^{n-d}_\epsilon\setminus\{0\})$ for a proper convex cone $\Gamma_0$ in $\R^{n-d}$ and some $\epsilon>0$.
By further rescaling the coordinates, we may assume that $\overline{\Omega}$ contains $D:=B^d_1\times(\Gamma_0\cap B^{n-d}_1)$.

Suppose by contradiction that $u\in C(\overline{\Omega})$ is convex and satisfies 
\begin{equation}\label{eqn_superproof1}
\det\D^2u(y,z)\geq M\dist((y,z),L)^\alpha=M|z|^\alpha\ \ \text{ for all }(y,z)\in\Omega,
\end{equation}
with $\alpha\leq 2d-2n$. Let $K$ be a compact set contained in the interior $D^\circ$ of $D$. Condition \eqref{eqn_superproof1} implies 
$$
\mathcal{M}u(\sigma_t(K))=\int_{\sigma_t(K)}\det\D^2u(y,z)\dif y\,\dif z\geq M\int_{\sigma_t(K)}|z|^\alpha\dif y\,\dif z=Mt^{\alpha+n-d}\int_K |z|^\alpha \dif y\,\dif z,
$$
where $\sigma_t(y,z):=(y,tz)$, $0<t<1$. We may assume that $K$ has nonempty interior, so that the last integral is positive. Since $\alpha+n-d\leq d-n$, 
this inequality contradicts the estimate $\mathcal{M}u(\sigma_t(K))=o(t^{d-n})$ provided by Lemma \ref{lemma_super},  completing the proof.
\end{proof}

\section{Finite-slope and existence results: explicit barriers}\label{sec_finiteslope}

We prove Theorem \ref{thm_B} in this section. The proofs of parts \ref{item_thmB1} and \ref{item_thmB2} are given in \S \ref{subsec_finiteslopeA} and \S \ref{subsec_existence}, respectively, both based on the explicit barriers (subsolutions) constructed in \S \ref{subsec_barriers}. Note that, although we assume $0\leq d\leq n-1$ throughout the paper, some expressions in this section require $d\geq 1$ to make sense, but we omit a separate discussion of the case $d=0$ since it only requires a minor modification from $d\geq1$.

\subsection{Explicit subsolutions to $\det\D^2u(y,z)=|z|^\alpha$}\label{subsec_barriers}
As before, we denote points in $\R^n=\R^d\times\R^{n-d}$ by $(y,z)$. In Lemmas \ref{lemma_barrier} and \ref{lemma_barrier2} below, we exhibit two explicit convex subsolutions to the equation $\det\D^2u(y,z)= |z|^\alpha$ of the product form $(|y|^2\pm\Lambda)\Phi(z)$ with $\Phi(0)=0$. They will be used in \S \ref{subsec_finiteslopeA} and \S \ref{subsec_existence}, respectively.

To verify that they are indeed convex subsolutions, we calculate their Hessians using the following standard linear algebraic lemma. In what follows, all matrices are real, ``$A>0$'' means ``$A$ is positive definite'' for a matrix $A$, and $v\in\R^m$ is treated as a column vector in 
expressions like $v\transp{v}$ and $\transp{v}Av$. 

\begin{lemma}\label{lemma_LA}
	Let $A$ (resp.\ $C$) be a non-degenerate symmetric matrix of dimension $d_A$ (resp.\ $d_C$), $B$ be a $d_A\times d_C$ matrix, and let  $p,q\in\R$, $v\in\R^{d_A}$. Then we have the following formulas for determinants and criteria for positive definiteness:
\begin{equation}\label{eqn_det1}
	\det\begin{pmatrix}
		A&B\\
		\transp{B}&C
	\end{pmatrix}
	=\det(A)\det\left(C-\transp{B} A^{-1}B\right),
\end{equation}
\begin{equation}\label{eqn_pd1}
\begin{pmatrix}
	A&B\\
	\transp{B}&C
\end{pmatrix}>0
\ \Longleftrightarrow\ 
A>0\,\text{ and }\, C-\transp{B} A^{-1}B>0,
\end{equation}
\begin{equation}\label{eqn_det2}
%	\det(a A+b B\transp{B})=a^{d_A-d_C}\det(A)\det(aI_{d_C}+b\transp{B}A^{-1}B).
		\det(p A+q v\transp{v})=p^{d_A-1}(p+q\,\transp{v}A^{-1}v)\det(A).
\end{equation}
Moreover, when $p,A>0$, 
\begin{equation}\label{eqn_pd2}
	p A+q v\transp{v}>0 
	\ \Longleftrightarrow\
%	p+q\,\transp{v}A^{-1}v>0 
%	\,\text{ and }\, 
%	A>0.
\det(p A+q v\transp{v})>0.
\end{equation}
\end{lemma}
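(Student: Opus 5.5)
The plan is to prove the four assertions of Lemma \ref{lemma_LA} in order, using the block Gaussian elimination (Schur complement) factorization for the first two and the matrix determinant lemma for the last two.

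For \eqref{eqn_det1}, I would use the factorization
$$
\begin{pmatrix}
	A&B\\
	\transp{B}&C
\end{pmatrix}
=\begin{pmatrix}
	I&0\\
	\transp{B}A^{-1}&I
\end{pmatrix}
\begin{pmatrix}
	A&0\\
	0&C-\transp{B}A^{-1}B
\end{pmatrix}
\begin{pmatrix}
	I&A^{-1}B\\
	0&I
\end{pmatrix},
$$
which holds because $A$ is invertible and symmetric, so that $\transp{(A^{-1}B)}=\transp{B}A^{-1}$. The outer factors are unitriangular, and taking determinants gives \eqref{eqn_det1}.

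For \eqref{eqn_pd1}, I would note that the same factorization is a congruence $\transp{P}\,\mathrm{diag}(A,\,C-\transp{B}A^{-1}B)\,P$ with $P$ invertible. By Sylvester's law of inertia, or directly from the quadratic form $\transp{w}(\cdot)w$, positive definiteness is preserved under congruence. Hence the block matrix is positive definite if and only if the block-diagonal matrix is, which holds if and only if both diagonal blocks are positive definite.

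For \eqref{eqn_det2}, I would first treat the case $p\neq0$. Write $pA+qv\transp{v}=pA\,(I+\tfrac{q}{p}A^{-1}v\transp{v})$. The matrix $A^{-1}v\transp{v}$ has rank at most one, so $\det(I+xy^{\mathsf T})=1+y^{\mathsf T}x$ gives
$$
\det(pA+qv\transp{v})=p^{d_A}\det(A)\bigl(1+\tfrac{q}{p}\transp{v}A^{-1}v\bigr)=p^{d_A-1}(p+q\,\transp{v}A^{-1}v)\det(A).
$$
The case $p=0$ then follows by continuity in $p$, since both sides are polynomial in $p$. (When $d_A=1$ it is immediate, and otherwise both sides vanish, because $qv\transp{v}$ has rank at most one.)

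For \eqref{eqn_pd2}, assume $p>0$ and $A>0$. If $pA+qv\transp{v}>0$, its determinant is positive. Conversely, suppose the determinant is positive. By \eqref{eqn_det2} and $p^{d_A-1}\det(A)>0$, this gives $p+q\,\transp{v}A^{-1}v>0$. Now consider the congruence by $A^{-1/2}$: it turns the matrix into $pI+q\,ww^{\mathsf T}$ with $w=A^{-1/2}v$. The eigenvalues of $pI+q\,ww^{\mathsf T}$ are $p$ (with multiplicity $d_A-1$) and $p+q|w|^2=p+q\,\transp{v}A^{-1}v$. All of these are positive, so the matrix is positive definite.

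None of these steps is a real obstacle. The only point needing care is the degenerate case $p=0$ in \eqref{eqn_det2}, which is handled by the continuity argument above.
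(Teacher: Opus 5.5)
Your proof is correct and follows essentially the same route as the paper. The block (Schur-complement) congruence gives \eqref{eqn_det1} and \eqref{eqn_pd1}, and \eqref{eqn_pd2} follows by normalizing to $pI+q\,w\transp{w}$ and diagonalizing it along $w$ and $w^\perp$. The only minor difference is in \eqref{eqn_det2}: you quote the rank-one identity $\det(I+x\transp{y})=1+\transp{y}x$ and handle $p=0$ by a polynomial-identity argument, whereas the paper derives it from the dual Schur identity $\det(A)\det(C-\transp{B}A^{-1}B)=\det(C)\det(A-BC^{-1}\transp{B})$ with $C=-q^{-1}$, $B=v$, and dismisses $p=0$ or $q=0$ as trivial.
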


\begin{proof}
Equation \eqref{eqn_det1} follows from the congruences
\begin{equation}\label{eqn_congruence}
\begin{pmatrix}
	A&B\\
	\transp{B}&C
\end{pmatrix}
\sim
\begin{pmatrix}
	A&0\\
	0&C-\transp{B}A^{-1}B
\end{pmatrix}
\sim
\begin{pmatrix}
	A-BC^{-1}\transp{B}&0\\
	0&C
\end{pmatrix},
\end{equation}
which also yield
$$
\det(A)\det\left(C-\transp{B} A^{-1}B\right)=\det(C)\det\left(A-B C^{-1}\transp{B}\right).
$$
Equation \eqref{eqn_det2} is trivial if $p=0$ or $q=0$, and otherwise follows from the above equality by letting $C=-q^{-1}$, $B=v$ and rescaling $A$. The equivalence \eqref{eqn_pd1} follows from \eqref{eqn_congruence}. Finally, to show the equivalence \eqref{eqn_pd2}, we may assume $p=1$ and $A=I$ by multiplying $pA+qv\transp{v}$ both on the left and right by $(pA)^{-\frac{1}{2}}$ and replacing $v$ by $(pA)^{-\frac{1}{2}}v$. If $v=0$, the equivalence is trivial; otherwise, for $I+q v\transp{v}$ to be positive definite, it is sufficient that $\transp{x}\big(I+qv\transp{v}\big)x>0$ for $x=v$ and all $x\in v^\perp\setminus\{0\}$. But we have
$$
\transp{x}\big(I+qv\transp{v}\big)x=
\begin{cases}
	|v|^2+q|v|^4=|v|^2\det(I+qv\transp{v})&\text{ if }x=v\\
	p\transp{x}Ax>0 & \text{ if }x\in v^\perp\setminus\{0\}
\end{cases}
$$
by \eqref{eqn_det2}. The required equivalence follows.
\end{proof}

Our first subsolution, as given in the following lemma, is a slight generalization of the Pogorelov-type barrier constructed by Caffarelli \cite{Caffarelli_1993} (see also \cite{CaffarelliYuan2022, MooneyRakshit2023}), which in turn generalizes the original construction of Pogorelov for $d=n-1$.
%It requires $\alpha>2d-n$ and is defined on the slab $B_R^d\times \R^{n-d}$.
\begin{lemma}\label{lemma_barrier}
	Given $\alpha>2d-n$ and $M,R>0$, there exists $\Lambda>0$ such that the function 
	\begin{align*}
		f:\R^n=&\ \R^d\times\R^{n-d}\to \R,\\[-0.2cm]
		&f(y,z):=(|y|^2+\Lambda)|z|^\beta,\quad \beta:=\frac{\alpha+2n-2d}{n}>1
	\end{align*}
	satisfies 
	\begin{equation}\label{eqn_lemmabarrier1}
	\D^2 f(y,z)>0\ \text{ and }\ \det\D^2 f(y,z)\geq M|z|^\alpha \quad \text{if }  |y|<R\text{ and }z\neq 0.
	\end{equation}
\end{lemma}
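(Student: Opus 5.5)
The plan is to compute the Hessian of $f=\phi(y)\psi(z)$ explicitly, with $\phi(y):=|y|^2+\Lambda$ and $\psi(z):=|z|^\beta$, and then apply Lemma \ref{lemma_LA}. For $z\neq0$ the blocks are
$$
\D^2_{yy}f=2\psi\, I_d,\qquad \D^2_{yz}f=2\beta|z|^{\beta-2}\,y\transp{z},\qquad \D^2_{zz}f=\phi\,\beta|z|^{\beta-2}\Big(I_{n-d}+(\beta-2)\frac{z\transp{z}}{|z|^2}\Big).
$$
I take $A=\D^2_{yy}f=2|z|^\beta I_d>0$ and form the Schur complement
$$
C-\transp{B}A^{-1}B=\beta|z|^{\beta-2}\Big(\phi\, I_{n-d}+\Big(\phi(\beta-2)-2\beta|y|^2\Big)\frac{z\transp{z}}{|z|^2}\Big),
$$
using $\transp{B}A^{-1}B=\frac{4\beta^2|z|^{2\beta-4}|y|^2}{2|z|^\beta}z\transp{z}=2\beta^2|y|^2|z|^{\beta-4}z\transp{z}$.

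By \eqref{eqn_det2} with unit vector $v=z/|z|$, this Schur complement has determinant
$$
(\beta|z|^{\beta-2})^{n-d}\,\phi^{n-d-1}\Big(\phi(\beta-1)-2\beta|y|^2\Big).
$$
By \eqref{eqn_pd2}, it is positive definite as soon as $\phi(\beta-1)-2\beta|y|^2>0$. Since $\beta>1$ is equivalent to $\alpha>2d-n$, this holds on $|y|<R$ once $\Lambda(\beta-1)>2\beta R^2$: then $\phi(\beta-1)-2\beta|y|^2\geq\Lambda(\beta-1)-2\beta R^2>0$. With this, \eqref{eqn_pd1} gives $\D^2f>0$.

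By \eqref{eqn_det1}, the determinant is
$$
\det\D^2f=2^d|z|^{d\beta}\,\beta^{n-d}|z|^{(\beta-2)(n-d)}\phi^{n-d-1}\Big(\phi(\beta-1)-2\beta|y|^2\Big).
$$
The power of $|z|$ is $n\beta-2(n-d)=\alpha$, exactly as $\beta$ was designed. For the lower bound I enlarge $\Lambda$ so that $\Lambda(\beta-1)\ge 4\beta R^2$. Then the last factor is at least $\tfrac12\Lambda(\beta-1)$, while $\phi^{n-d-1}\ge\Lambda^{n-d-1}$. This gives
$$
\det\D^2f\geq 2^{d-1}\beta^{n-d}(\beta-1)\Lambda^{n-d}|z|^\alpha,
$$
and since $n-d\geq1$, choosing $\Lambda$ large makes the constant at least $M$.

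The only delicate point is keeping the exponent bookkeeping and the rank-one term's coefficient right. The main obstacle is the sign of the $z\transp{z}$ coefficient; it is handled entirely by $\beta>1$ together with a large $\Lambda$.
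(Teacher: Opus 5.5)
Your proposal is correct and follows essentially the same route as the paper: the same explicit Hessian blocks, the Schur complement with respect to $\pa^2_{yy}f=2|z|^\beta I_d$, and Lemma \ref{lemma_LA} to get the scalar factor $(\beta-1)\Lambda-(\beta+1)|y|^2$ and the exponent $n\beta-2(n-d)=\alpha$. The only differences are cosmetic: you choose $\Lambda$ explicitly ($\Lambda(\beta-1)\geq 4\beta R^2$ plus largeness), and you prove positive definiteness directly from that factor rather than deducing it from the determinant bound.
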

\begin{proof}
	We will omit the variables $(y,z)$ in expressions like $f(y,z)$ and $\D^2f(y,z)$. The Hessian $\D^2f$ can be written in the block form
	\begin{equation}\label{eqn_fhessianblocks}
		\D^2 f=\begin{pmatrix}
			\pa^2_{yy}f&\pa^2_{yz}f\\[1pt]
			\pa^2_{zy}f&\pa^2_{zz}f
		\end{pmatrix},
	\end{equation}
where $\pa_{yy}^2f$ denotes the matrix $\big(\frac{\pa^2f}{\pa y_i\pa y_j}\big)_{1\leq i,j\leq d}$, and similarly for the other blocks. Computing the second derivatives, we obtain 
\begin{equation}\label{eqn_proofbarrier0}
	\begin{split}
	&\pa^2_{yy}f=2|z|^\beta I_d,\qquad \pa^2_{zz}f=\beta(|y|^2+\Lambda)|z|^{\beta-2}\left(I_{n-d}+\frac{\beta-2}{|z|^2}z\transp{z}\right),\\
	&\pa_{zy}^2f=2\beta|z|^{\beta-2} z\transp{y},\qquad \pa_{yz}^2f=2\beta|z|^{\beta-2} y \transp{z},
	\end{split}
\end{equation}
where $I_m$ denotes the $m\times m$ identity matrix. To apply Lemma \ref{lemma_LA}, note that $\pa^2_{yy}f>0$ for $z\neq0$, whereas
\begin{equation}\label{eqn_proofbarrier1}
	\begin{split}
	\pa^2_{zz}f-\pa^2_{zy}f(\pa_{yy}^2f)^{-1}\pa^2_{yz}f&=\beta|z|^{\beta-2}(|y|^2+\Lambda)\left(I_{n-d}+\frac{\beta-2}{|z|^2}z\transp{z}\right)-\frac{(2\beta|z|^{\beta-2})^2}{2|z|^\beta}|y|^2z\transp{z}\\
	&=\beta|z|^{\beta-2}\left[(|y|^2+\Lambda)I_{n-d}+\frac{(\beta-2)\Lambda-(\beta+2)|y|^2}{|z|^2}z\transp{z}\right]
	\end{split}
\end{equation}
has the form $pA+qv\transp{v}$ with $p,A>0$. Therefore, by \eqref{eqn_pd1} and \eqref{eqn_pd2}, the first inequality in \eqref{eqn_lemmabarrier1} follows from the second, hence it suffices to establish the latter. This amounts to a straightforward calculation: applying \eqref{eqn_det1} first, then \eqref{eqn_proofbarrier0} and \eqref{eqn_proofbarrier1}, and finally \eqref{eqn_det2}, we obtain
	\begin{align*}
		\det \D^2 f&=\det \pa^2_{yy}f\, \det\left(\pa^2_{zz}f-\pa^2_{zy}f(\pa_{yy}^2f)^{-1}\pa^2_{yz}f\right)\\
		&=(2|z|^\beta)^d (\beta|z|^{\beta-2})^{n-d}\det\left[(|y|^2+\Lambda)I_{n-d}+\frac{(\beta-2)\Lambda-(\beta+2)|y|^2}{|z|^2}z\transp{z}\right]\\
		&=(2|z|^\beta)^d (\beta|z|^{\beta-2})^{n-d}(|y|^2+\Lambda)^{n-d-1}\left(|y|^2+\Lambda+(\beta-2)\Lambda-(\beta+2)|y|^2\right)\\
%&=2^d\beta^{n-d}|z|^{n\beta-2(n-d)}(|y|^2+\Lambda)^{n-d-1}\left[(\beta-1)\Lambda-(\beta+1)|y|^2\right].
&=2^d\beta^{n-d}\left(\frac{\Lambda-|y|^2}{\Lambda+|y|^2}\beta-1\right)|z|^{n\beta-2(n-d)}(|y|^2+\Lambda)^{n-d}.
	\end{align*}
	Since $n\beta-2(n-d)=\alpha$ and $\beta>1$, we conclude that $\det \D^2 f\geq M|z|^\alpha$ for $|y|<R$ and $z\neq0$ if $\Lambda=\Lambda(n,d,\beta,M,R)$ is sufficiently large, as required.
\end{proof}

In the second subsolution, as given in the next lemma, the $z$-factor is changed from $|z|^\beta$ into $(z_1\cdots z_{n-d})^\gamma$, which is a concave function of $z=(z_1,\cdots,z_{n-d})\in\R^{n-d}_{\geq0}$ when $0<\gamma\leq\frac{1}{n-d}$, and is strictly concave for $z_1,\cdots,z_{n-d}>0$ and $\gamma<\frac{1}{n-d}$. Meanwhile, the $y$-factor is changed into the negative convex function $|y|^2-\Lambda$ ($|y|<R<\sqrt{\Lambda}$). 
\begin{lemma}\label{lemma_barrier2}
%	Given $\alpha>2d-2n$, pick
%	$$
%	0<\beta<\min\left\{1,\frac{2(n-d)}{n},\frac{\alpha+2(n-d)}{n}\right\}。
%	$$
%Then for any $M,R>0$, there exists $\Lambda>0$ such that the function 
%$$
%		f:\R^d\times\R^{n-d}_{\geq0}\to \R,\quad f(y,z):=(|y|^2-\Lambda)(z_1\cdots z_{n-d})^\frac{\beta}{n-d},
%$$
	Given $2d-2n<\alpha< 2d-n$ and $M,R>0$, there exists $\Lambda>R^2$ such that the function 
\begin{align*}
	f:&\R^d\times\R^{n-d}_{\geq0}\to \R,\\[-0.2cm]
	&f(y,z):=(|y|^2-\Lambda)(z_1\cdots z_{n-d})^\frac{\beta}{n-d},\quad \beta:=\frac{\alpha+2n-2d}{n}\in(0,1)
\end{align*}
	satisfies 
	\begin{equation}\label{eqn_lemmabarrier2}
	\D^2 f(y,z)>0\ \text{ and }\ \det\D^2 f(y,z)\geq M(z_1\cdots z_{n-d})^\frac{\alpha}{n-d} \quad \text{if }  |y|<R \text{ and } z_1,\cdots,z_{n-d}>0.
	\end{equation}
%	(The first inequality means positive definiteness of the Hessian $\D^2f$.)
\end{lemma}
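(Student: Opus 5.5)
We follow the template of Lemma \ref{lemma_barrier}: write $\D^2 f$ in the block form \eqref{eqn_fhessianblocks}, take a Schur complement with respect to the $yy$-block, and reduce everything to \eqref{eqn_det1}, \eqref{eqn_pd1}, \eqref{eqn_det2}, \eqref{eqn_pd2}. Set $m:=n-d$, $\gamma:=\beta/m\in(0,\frac1m)$, $P:=(z_1\cdots z_m)^\gamma$, $\phi:=\Lambda-|y|^2>0$ (for $|y|<R<\sqrt\Lambda$), and $w:=(z_1^{-1},\dots,z_m^{-1})^\intercal$. Then $f=-\phi P$, and
$$
\pa_{yy}^2f=2P\,I_d,\qquad \pa_{yz}^2f=2\gamma P\,y\transp{w},\qquad \D_z P=\gamma Pw,\qquad \D^2_zP=\gamma P\big(\gamma w\transp{w}-W^2\big),
$$
where $W=\diag(z_1^{-1},\dots,z_m^{-1})$. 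Consequently
$$
\pa^2_{zz}f=\gamma\phi P\,\big(W^2-\gamma w\transp{w}\big),
$$
which is positive definite because $P$ is strictly concave for $\gamma<1/m$; we also get this from \eqref{eqn_pd2}, since $\transp{w}W^{-2}w=m$ and $1-\gamma m>0$. Since $\pa^2_{yy}f>0$, the Schur complement is
$$
S=\pa^2_{zz}f-\pa^2_{zy}f(\pa^2_{yy}f)^{-1}\pa^2_{yz}f=\gamma P\Big[\phi\,W^2-\big(\gamma\phi+2\gamma|y|^2\big)w\transp{w}\Big],
$$
which has the form $pA+qv\transp{v}$ with $A=W^2>0$, $v=w$, $p=\phi$, $q=-\gamma(\phi+2|y|^2)$.

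By \eqref{eqn_det2}, using $\transp{w}W^{-2}w=m$ and $\det W^2=(z_1\cdots z_m)^{-2}$,
$$
\det S=(\gamma P)^m\,\phi^{m-1}\big(\phi-m\gamma(\phi+2|y|^2)\big)(z_1\cdots z_m)^{-2}.
$$
Since $m\gamma=\beta$, the bracket equals $(1-\beta)\phi-2\beta|y|^2$. It is positive for $|y|<R$ as soon as $(1-\beta)(\Lambda-R^2)>2\beta R^2$, which is possible exactly because $\beta<1$. This is where the hypothesis $\alpha<2d-n$ enters, while $\alpha>2d-2n$ gives $\beta>0$. Then \eqref{eqn_pd2} and \eqref{eqn_pd1} yield $\D^2 f>0$.

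Finally, by \eqref{eqn_det1},
$$
\det\D^2 f=(2P)^d\det S=c\,\phi^{m-1}\big((1-\beta)\phi-2\beta|y|^2\big)\,(z_1\cdots z_m)^{\gamma n-2},
$$
with $c=2^d\gamma^m>0$. The exponent is $\gamma n-2=(n\beta-2m)/m=\alpha/m$, as required. For $|y|<R$ the prefactor is at least $c(\Lambda-R^2)^{m-1}\big((1-\beta)(\Lambda-R^2)-2\beta R^2\big)$, which tends to $+\infty$ as $\Lambda\to\infty$, so it exceeds $M$ for $\Lambda$ large. The main obstacle is bookkeeping the sign of $q$ and checking that the bracket stays positive; here, in contrast to Lemma \ref{lemma_barrier}, the roles of $\beta<1$ and of the negative sign of $|y|^2-\Lambda$ are essential.
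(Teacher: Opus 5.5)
Your proposal is correct and follows the paper's proof essentially line by line. You use the same block Hessian and the same Schur complement with respect to $\pa^2_{yy}f=2P\,I_d$; your complement coincides with the paper's because $\phi+2|y|^2=\Lambda+|y|^2$. You then apply \eqref{eqn_det2}, \eqref{eqn_pd2} and \eqref{eqn_pd1}, which gives the same determinant with the bracket $(1-\beta)\Lambda-(1+\beta)|y|^2$. Your explicit choice of $\Lambda$, making that bracket positive for $|y|<R$ and the prefactor at least $M$, is spelled out in more detail than in the paper.
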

Note that we have $|z_j|\leq |z|$ for $z\in\R^{n-d}_{\geq0}$, but the reverse control does not hold, hence the required inequality $\det\D^2 f\geq M|z|^\alpha$ holds throughout $B^d_R\times\R^{n-d}_{\geq0}$ only when $\alpha\leq0$. This issue can easily be fixed when we apply this barrier (see Lemma \ref{lemma_barrier3} below).
%This can easily be fixed in several ways, for example by restricting to a sub-cone of $\R^{n-d}_{\geq0}$ as in the proof of Lemma \ref{lemma_barrier3} below.

\begin{proof}
The proof is analogous to that of Lemma \ref{lemma_barrier}, so we only present the main calculations. Putting $\gamma:=\frac{\beta}{n-d}$ and $z^{-1}:=\transp{(z_1^{-1},\cdots,z_{n-d}^{-1})}$, we have
$$
\pa^2_{yy}f=2(z_1\cdots z_{n-d})^\gamma I_d,\qquad \pa^2_{zz}f=\gamma(|y|^2-\Lambda)(z_1\cdots z_{n-d})^\gamma\left[\gamma z^{-1}\transp{(z^{-1})}-\diag\big(z_1^{-2},\cdots,z_{n-d}^{-2}\big)\right],
$$
$$
\pa^2_{zy}f=2\gamma(z_1\cdots z_{n-d})^\gamma z^{-1}\transp{y},\qquad
\pa^2_{yz}f=2\gamma(z_1\cdots z_{n-d})^\gamma y\transp{(z^{-1})},
$$
\begin{align*}
	\pa^2_{zz}f-\pa^2_{zy}f(\pa_{yy}^2f)^{-1}\pa^2_{yz}f=\gamma(z_1\cdots z_{n-d})^\gamma\Big[
	(\Lambda-|y|^2)\diag\big(z_1^{-2},\cdots,z_{n-d}^{-2}\big)
	-\gamma(|y|^2+\Lambda)z^{-1}\transp{(z^{-1})}
	\Big],
\end{align*}
\begin{align*}
	&\det \D^2 f=\det \pa^2_{yy}f\, \det\left(\pa^2_{zz}f-\pa^2_{zy}f(\pa_{yy}^2f)^{-1}\pa^2_{yz}f\right)\\
	&=\big[2(z_1\cdots z_{n-d})^\gamma\big]^d\big[\gamma(z_1\cdots z_{n-d})^\gamma\big]^{n-d}\det\Big[
	(\Lambda-|y|^2)\diag\big(z_1^{-2},\cdots,z_{n-d}^{-2}\big)
	-\gamma(|y|^2+\Lambda)z^{-1}\transp{(z^{-1})}
	\Big]\\
	&=2^d\gamma^{n-d}(z_1\cdots z_{n-d})^{\gamma n}(\Lambda-|y|^2)^{n-d-1}(z_1\cdots z_{n-d})^{-2}\big[\Lambda-|y|^2-\gamma(n-d)(|y|^2+\Lambda)\big]\\
	&=2^d\gamma^{n-d}(\Lambda-|y|^2)^{n-d}\left(1-\frac{\Lambda+|y|^2}{\Lambda-|y|^2}\beta\right)(z_1\cdots z_{n-d})^\frac{\alpha}{n-d}.
\end{align*}
Since $\beta<1$, the last expression implies the second inequality in \eqref{eqn_lemmabarrier2}. 
By Lemma \ref{lemma_LA} and the expression of $\pa^2_{zz}f-\pa^2_{zy}f(\pa_{yy}^2f)^{-1}\pa^2_{yz}f$, the first inequality in \eqref{eqn_lemmabarrier2} follows.
\end{proof}

\subsection{The finite-slope result}\label{subsec_finiteslopeA}
%Let $\Omega\subset\R^n$ be a bounded convex domain, $F\subset\pa\Omega$ be a $d$-dimensional flat boundary piece, and $L:=\Aff(F)$ be the affine hull of $F$. 
We now adopt the assumptions of Theorem \ref{thm_B} and prove part \ref{item_thmB1} of the theorem. As discussed in \S \ref{subsec_coordinates}, we may fix a coordinate system 
$x=(y,z)$ on $\R^n=\R^d\times\R^{n-d}$ with $y\in \R^d$ and $z\in\R^{n-d}$, such that
$$
x_0=(0,0)\in\pa\Omega,\quad L=\R^d\times\{0\},\quad \Omega\subset\Gamma=\R^{d}\times\Gamma_0,
$$
where $\Gamma_0$ is a proper convex cone in $\R^{n-d}$.
Let $u\in C(\overline{\Omega})$ be a convex function vanishing on $\pa\Omega$ and satisfying either of the two conditions
\begin{equation}\label{eqn_finiteslopeproof1}
	\det\D^2u(y,z)\leq M\,\dist((y,z),L)^\alpha=M |z|^\alpha \ \ \text{ for all }(y,z)\in\Omega,
\end{equation}
\begin{equation}\label{eqn_finiteslopeproof1.1}
	\det\D^2u(y,z)\leq M(-u(y,z))^\alpha \ \ \text{ for all }(y,z)\in\Omega,
\end{equation}
with $\alpha>2d-n$ and $M>0$. We need to show that $u$ has finite slope at $x_0$. We  present the proofs under the two conditions separately.

\begin{proof}[Proof of Theorem \ref{thm_B} \ref{item_thmB1}, case \eqref{eqn_finiteslopeproof1}] 
Consider the function $f(y,z)$ produced by Lemma \ref{lemma_barrier}, where we take the parameter $R$ to be
$\max_{(y,z)\in\overline{\Omega}}|y|$ and
take the $M$ from condition \eqref{eqn_finiteslopeproof1}. Then $f$ is convex on $\overline{\Omega}$ and satisfies 
\begin{equation}\label{eqn_finiteslopeproof1.2}
	\det \D^2f(y,z)\geq M|z|^\alpha \ \ \text{ for all }(y,z)\in\Omega.
\end{equation}
Since the convex cone $\Gamma_0\subset\R^{n-d}$ is proper, we can choose $\xi\in\R^{n-d}$ with $|\xi|=1$ such that
%the orthogonal complement $\xi^\perp$ of $\xi$ in $\R^{n-d}$ intersects $\Gamma_0$ only at the origin, and that 
the half-space 
$$
\mathcal{H}_\xi:=\big\{z\in\R^{n-d}\,\big|\,\xi\cdot z\geq 0\big\}\subset\R^{n-d}
$$
contains $\Gamma_0\setminus\{0\}$ in its interior. We shall bound $u$ from below by a linear modification $v$ of $f$ of the form
\begin{equation}\label{eqn_finiteslopeproof0}
v:\overline{\Omega}\to\R,\quad v(y,z):=f(y,z)-C\,\xi\cdot z.
\end{equation}
In order to determine the coefficient $C>0$ of the modification term, we note that 
\begin{equation}\label{eqn_finiteslopeproof2}
C_1|z|\leq \xi\cdot z\leq |z| \ \ \text{ for all } z\in \Gamma_0,
\end{equation}
with a constant $C_1=C_1(\Gamma_0,\xi)\in(0,1)$. Here, the first ``$\leq$'' follows from Lemma \ref{lemma_twocones} by taking the cones $\Gamma$ and $\Gamma'$ therein to be $\Gamma_0$ and $\mathcal{H}_\xi$, respectively, and the second from Cauchy-Schwarz. Meanwhile, since $\Omega$ is bounded and $f(y,z)$ has the expression $(|y|^2+\Lambda)|z|^{\beta}$ with $\Lambda>0$ and $\beta>1$, we have
\begin{equation}\label{eqn_finiteslopeproof3}
	0\leq f(y,z)\leq C_2|z| \ \ \text{ for all } (y,z)\in\overline{\Omega},
\end{equation}
with a constant $C_2=C_2(\Lambda,\beta,\Omega)>0$. Then we may take
$C$ to be any number no less than $C_2/C_1$, so that \eqref{eqn_finiteslopeproof2} and \eqref{eqn_finiteslopeproof3} imply $v\leq0$ on $\overline{\Omega}$.

	By the assumptions on $u$ and the construction of $v$, we have
	$$
	u=0\geq v\ \text{ on }\pa\Omega,\quad \det\D^2u\leq \det \D^2 v \ \text{ in }\Omega.
	$$
	It follows that $u\geq v$ in $\Omega$ by the comparison principle (Lemma \ref{lemma_comparison}). Since $v$ has finite slope at $x_0=(0,0)$ and its value at $x_0$ coincides with $u$'s, we conclude that $u$ also has finite slope at $x_0$, as required.
\end{proof}

\begin{proof}[Proof of Theorem \ref{thm_B} \ref{item_thmB1}, case \eqref{eqn_finiteslopeproof1.1}] 
	We begin with the following observation: although the exponent $\alpha>2d-n$ is arbitrary, once the required statement is established for some exponent $\alpha_0$, it follows for all $\alpha\geq \alpha_0$. This is because condition \eqref{eqn_finiteslopeproof1.1} increases in strength as $\alpha$ increases:
	$$
	\det \D^2u\leq M(-u)^\alpha\ \Longrightarrow \ \det \D^2u\leq  M\|u\|_{L^\infty(\overline{\Omega})}^{\alpha-\alpha_0}(-u)^{\alpha_0}\ \text{ for }\alpha\geq \alpha_0
	$$
	(the constant $M$ varies, but this does not affect the statement). Therefore, we only need to treat those $\alpha$ that are arbitrarily close to $2d-n$. Since $2d-n<n$, we may henceforth assume that $\alpha<n$.
	
	Let $f$ and $v$ be the same as in the previous proof, except that we now:
	\begin{itemize}
		\item 
	take the parameter $M$ to be $1$ when applying Lemma \ref{lemma_barrier}, so that we have, in place of \eqref{eqn_finiteslopeproof1.2}, 
	\begin{equation}\label{eqn_finiteslopeproof4}
	\det \D^2f(y,z)\geq |z|^\alpha \ \ \text{ for all }(y,z)\in\Omega~;
	\end{equation}
	the constant $M$ in condition \eqref{eqn_finiteslopeproof1.1} is not used in this step and will appear only later;
		\item 
take the coefficient $C$ to be $C=2C_2/C_1$ in the definition \eqref{eqn_finiteslopeproof0} of $v$;
     \item 
     do not bound $u$ directly by $v$, but will find a constant multiple $\widetilde{v}$ of $v$ to bound $u$ instead. 
	\end{itemize}
By this choice of $C$ and \eqref{eqn_finiteslopeproof0}--\eqref{eqn_finiteslopeproof3}, we have
	$$
     v(y,z)\leq C_2|z|-C_1C|z|=-C_2|z|,\qquad 	v(y,z)\geq -C\xi\cdot z\geq -C|z|,
	$$
	and hence
	$-C^{-1}v(y,z)\leq |z|\leq -C_2^{-1}v(y,z)$
	for all $(y,z)\in\overline{\Omega}$. In view of \eqref{eqn_finiteslopeproof4}, it follows that
$$
	\det \D^2v(y,z)\geq |z|^\alpha\geq C_3(-v(y,z))^\alpha\ \ \text{ for all }(y,z)\in\Omega, \text{ where } C_3:=\min\big\{C^{-\alpha},\, C_2^{-\alpha}\big\}.
$$
Letting $\widetilde{v}=\lambda v$ with a constant $\lambda>0$, we deduce from the above inequality that
$$
\det\D^2\widetilde{v}=\lambda^n\det\D^2v\geq \lambda^nC_3(-v)^\alpha=\lambda^{n-\alpha}C_3(-\widetilde{v})^\alpha\  \text{ in }\Omega.
$$
Since we have assumed $\alpha<n$, we may adjust $\lambda$ to make the right-hand side no less than $M(-\widetilde{v})^\alpha$ (indeed, $\alpha\neq n$ suffices for this purpose). Putting everything together, we have
	$$
	u=0\geq \widetilde{v}\ \text{ on }\pa\Omega,\quad \det\D^2u\leq M(-u)^\alpha \ \text{ and }\ \det \D^2 \widetilde{v}\geq M(-\widetilde{v})^\alpha \ \text{ in }\Omega.
	$$	
Using Le's subcritical comparison principle (Lemma \ref{lemma_Lecomparison}), we deduce that $u\geq\widetilde{v}$ in $\Omega$, and conclude that $u$ has finite slope at $x_0$ as in the previous proof.
\end{proof}

When $\alpha<0$, the above proofs do not work if $u$ satisfies $\det\D^2u\leq M\,\dist(\,\cdot\,,\pa\Omega)^\alpha$ instead of \eqref{eqn_finiteslopeproof1} or \eqref{eqn_finiteslopeproof1.1}, because $\dist((y,z),\pa\Omega)^\alpha$ cannot be bounded from above by a constant times $\dist((y,z),L)=|z|^\alpha$. In fact, we have $\dist(\,\cdot\,,\pa\Omega)\leq \dist(\,\cdot\,,L)$ and hence $\dist(\,\cdot\,,\pa\Omega)^\alpha\geq \dist(\,\cdot\,,L)^\alpha$, but the reverse control up to a constant, which is essential for the proof, does not hold in general. Therefore, we may ask the following:

\begin{quote}
	\textbf{Question.} Suppose $d<n/2$ and $2d-n<\alpha<0$. Let $\Omega$ and $x_0\in\pa\Omega$ be as in Theorem \ref{thm_B}, and let $u\in C(\overline{\Omega})$ be a convex function satisfying
	$$
	\det \D^2u\leq M\,\dist(\,\cdot\,,\pa\Omega)^\alpha \  \text{ in }\Omega,\quad u|_{\pa\Omega}=0.
	$$
	Does $u$ have finite slope at $x_0$?
\end{quote}

\subsection{The existence result}\label{subsec_existence}
We proceed to prove Theorem \ref{thm_B} \ref{item_thmB2}. Namely, letting $\Gamma\subset\R^n$ be a convex cone with a $d$-dimensional ridge $L$, $\Omega\subset\Gamma$ be a bounded convex domain, and $g:\Omega\to\R_{\geq0}$ be a measurable function satisfying
\begin{equation}\label{eqn_proofexistence0.1}
	g(x)\leq M\,\dist(x,L)^\alpha\ \ \text{ for all }x\in\Omega
\end{equation}
with constants $M>0$ and $\alpha>2d-2n$, we shall show that there exists a  convex function $u\in C(\overline{\Omega})$ such that
\begin{equation}\label{eqn_proofexistence0}
\det\D^2u=g\  \text{ in }\Omega,\quad u|_{\pa\Omega}=0.
\end{equation}
The uniqueness of such a $u$ follows immediately from the comparison principle. Note that we do not actually need $L$ to contain some $x_0\in\pa\Omega$ as in the original statement of Theorem \ref{thm_B} \ref{item_thmB2}, although the statement is covered by the classical existence result (Lemma \ref{lemma_existence}) if this assumption fails.

The proof uses a standard exhaustion argument, where the barrier from Lemma \ref{lemma_barrier2} serves to bound the exhaustion sequence from below. However, this barrier has two issues: apart from the one mentioned after the statement of the lemma, the other is that it requires $\alpha< 2d-n$. The following lemma fixes both issues and formulates the barrier in a form suitable for use in the proof. 
%Recall that $B^m_r$ denotes the open ball of radius $r$ in $\R^m$ centered at the origin, and we omit the superscript if $m=n$.
\begin{lemma}\label{lemma_barrier3}
	Let $\Gamma\subset\R^n$ be a convex cone whose ridge $L$ has dimension $d$. Given $M,R>0$ and $\alpha>2d-2n$, there exists a convex function $v\in C(\Gamma\cap B_R)\cap C^\infty(\Gamma\cap B_R\setminus L)$ such that
%	\footnote{The set $\Gamma\cap B_R\setminus L\subset\R^n$ is not open, and $v\in C^\infty(\Gamma\cap B_R\setminus L)$ means $v$ admits a smooth extension to a neighborhood of each point of the set.} 
\begin{equation}\label{eqn_proofexistence0.2}
	\det\D^2v\geq M\,\dist(\,\cdot\,,L)^\alpha\ \text{ in }\Gamma\cap B_R\setminus L,\quad v|_{L\cap B_R}=0.
\end{equation}
\end{lemma}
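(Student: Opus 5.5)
We reduce to the barrier of Lemma \ref{lemma_barrier2} after two adjustments: lowering $\alpha$ into its admissible range, and passing to a coordinate cone containing $\Gamma$. First we normalize coordinates as in \S\ref{subsec_coordinates}: $L=\R^d\times\{0\}$ and $\Gamma=\R^d\times\Gamma_0$ with $\Gamma_0\subset\R^{n-d}$ proper. Enlarging $\Gamma_0$ and applying a linear change of the $z$-variables only changes $\dist(\cdot,L)$ up to a bounded factor, which is absorbed into $M$. This lets us assume that $\Gamma_0\setminus\{0\}$ lies in the interior of $\R^{n-d}_{\geq0}$. Concretely, $\Gamma_0$ may be taken to be a circular cone around the diagonal $(1,\dots,1)$ strictly inside the orthant. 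Enlarging $R$ is harmless, and $B_R\subset B^d_R\times B^{n-d}_R$, so it suffices to produce $v$ on $B^d_R\times(\Gamma_0\cap B^{n-d}_R)$.

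Next we reduce the exponent. The lemma is only harder for smaller $\alpha$ on the bounded set where $|z|<R$. Indeed, if $\alpha'\le\alpha$, then $|z|^{\alpha'}\ge R^{\alpha'-\alpha}|z|^{\alpha}$. Hence proving the statement with some $\alpha'\in(2d-2n,\min\{\alpha,2d-n\})$ and constant $MR^{\alpha-\alpha'}$ gives it for $\alpha$. We may therefore assume $2d-2n<\alpha<2d-n$, which is the range required by Lemma \ref{lemma_barrier2}.

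With this $\alpha$, we apply Lemma \ref{lemma_barrier2} with the given $R$ and a constant $M'$ to be fixed. This yields $f(y,z)=(|y|^2-\Lambda)(z_1\cdots z_{n-d})^{\beta/(n-d)}$. The function $f$ is continuous on $\R^d\times\R^{n-d}_{\ge0}$ and vanishes on $L$. It is smooth with $\D^2f>0$ where all $z_j>0$, and there $\det\D^2 f\ge M'(z_1\cdots z_{n-d})^{\alpha/(n-d)}$. We take $v:=f$ restricted to $\Gamma\cap B_R$. Then $v\in C(\Gamma\cap B_R)$, and $v$ is smooth off $L$, since $\Gamma_0\setminus\{0\}$ lies in the open orthant.

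For convexity, $v$ is convex on the open convex set $B^d_R\times(\R^{n-d}_{>0})$ because its Hessian is positive definite there. By continuity it is convex on the closure intersected with the slab, and in particular on $\Gamma\cap B_R$.

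The remaining step is to fix the issue noted after Lemma \ref{lemma_barrier2}, namely comparing $(z_1\cdots z_{n-d})^{\alpha/(n-d)}$ with $|z|^\alpha$. On $\Gamma_0$, which lies strictly inside the orthant, Lemma \ref{lemma_twocones} applied with $\Gamma=\Gamma_0$ and $\Gamma'=\R^{n-d}_{\ge0}$ gives $z_j\ge \dist(z,\pa\R^{n-d}_{\ge0})\ge c|z|$ for every $j$. Combined with $z_j\le|z|$, this gives $c^{n-d}|z|^{n-d}\le z_1\cdots z_{n-d}\le |z|^{n-d}$. Consequently $(z_1\cdots z_{n-d})^{\alpha/(n-d)}\ge c'|z|^\alpha$ for either sign of $\alpha$, with $c'=\min\{1,c^{\alpha}\}$. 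Choosing $M'=M/c'$ then gives $\det\D^2v\ge M|z|^\alpha=M\dist(\cdot,L)^\alpha$ on $\Gamma\cap B_R\setminus L$.

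No step is a real obstacle. The only care needed is to ensure the coordinate changes and the exponent reduction only rescale constants. We also need $\Gamma_0$ to sit strictly inside the open orthant, so that the product and the norm are comparable.
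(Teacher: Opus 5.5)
Your proof is correct and follows the paper's argument: you normalize so that $\Gamma_0\setminus\{0\}$ lies in the open orthant, lower the exponent using $|z|^{\alpha'}\geq R^{\alpha'-\alpha}|z|^{\alpha}$ on $|z|<R$, and restrict the barrier of Lemma \ref{lemma_barrier2} to $\Gamma\cap B_R$. The only difference is that the paper also forces the lowered exponent below $0$ (possible since $2d-2n<0$), so the trivial bound $z_j\leq|z|$ suffices, whereas you allow either sign and use the two-sided comparison from Lemma \ref{lemma_twocones}, which is equally valid.
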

\begin{proof}
	After a coordinate change, we may assume $\Gamma=\R^d\times\Gamma_0$, where $\Gamma_0\subset\R^{n-d}$ is a proper convex cone such that $\Gamma_0\setminus\{0\}$ is contained in $\R^{n-d}_+$. 
	We apply Lemma \ref{lemma_barrier2} with the same $R$ as here, but with $\alpha$ and $M$ replaced by new parameters $\alpha_0$ and $M_0$ such that
	$$
	2d-2n<\alpha_0<\min\big\{0,2d-n,\alpha\big\},\qquad
	M_0=MR^{\alpha-\alpha_0}.
	$$
	The lemma yields a convex function $f\in C(B^d_R\times\R^{n-d}_{\geq0})\cap C^\infty(B^d_R\times\R^{n-d}_+)$ with
	$$
	\det\D^2f(y,z)\geq M_0\,(z_1\cdots z_{n-d})^\frac{\alpha_0}{n-d}.
	$$
	Since $z_j\leq|z|$ ($j=1,\cdots,n-d$) and $\alpha_0,\alpha_0-\alpha<0$,  we have 
	$$
	M_0\,(z_1\cdots z_{n-d})^\frac{\alpha_0}{n-d}\geq M_0|z|^{\alpha_0}=M_0|z|^{\alpha_0-\alpha}|z|^\alpha\geq M_0 R^{\alpha_0-\alpha}|z|^\alpha=M|z|^\alpha
	$$
for all $z\in\R^{n-d}_+\cap B^{n-d}_R$.	Therefore, the restriction of $f$ to $\Gamma\cap B_R\subset B^d_R\times \big(\R^{n-d}_{\geq0}\cap B^{n-d}_R\big)$ gives the required function $v$.
%	Note that
%	\begin{equation}\label{eqn_proofexistence0.3}
%		C|z|\leq |z_j|\leq |z|\ \ \text{ for all }z=(z_1,\cdots,z_{n-d})\in\Gamma,\ j=1,\cdots,n-d,
%	\end{equation}
%	with a constant $C=C(\Gamma_0)\in (0,1)$, where the first ``$\leq$'' can be proved in the same way as Lemma \ref{lemma_twocones} and the second is trivial.	
%	When $\alpha>2d-n$, we obtain the required $v$ by restricting the function $f$ given by Lemma \ref{lemma_barrier} to $\Gamma\cap B_R$.  When $2d-2n<\alpha\leq 2d-n$, in view of \eqref{eqn_proofexistence0.3}, we may use the $f$ from Lemma \ref{lemma_barrier2} instead.
\end{proof}

With this lemma in hand, the rest of the proof is standard. We present it in detail for completeness.
\begin{proof}[Proof of Theorem \ref{thm_B} \ref{item_thmB2}]
Let $v$ be the function given by Lemma \ref{lemma_barrier3}, where we take the same $M$ as in \eqref{eqn_proofexistence0.1} and take $R$ large enough such that $\Omega\subset\Gamma\cap B_R$. Using the classical existence result (Lemma \ref{lemma_existence}), we obtain a sequence $(u_k)_{k=1,2,\cdots}$ of convex solutions to equation \eqref{eqn_proofexistence0} with truncated right-hand sides:
$$
\det\D^2u_k=\min\{g, k\}\ \text{ in }\Omega,\quad u_k|_{\pa\Omega}=0.
$$
By \eqref{eqn_proofexistence0.1} and \eqref{eqn_proofexistence0.2}, we have
$$
\det\D^2u_k\leq \det\D^2u_{k+1}\leq \det\D^2v\ \ \text{ in }\Omega,
$$
whereas $u_k|_{\pa\Omega}=u_{k+1}|_{\pa\Omega}=0\geq v|_{\pa\Omega}$, so the comparison principle (Lemma \ref{lemma_comparison}) gives
$$
u_k\geq u_{k+1}\geq v \ \text{ in }\Omega.
$$
Therefore, the pointwise limit function
$$
u:\Omega\to\R,\quad u(x):=\lim_{k\to\infty}u_k(x)
$$
is well defined. By Lemma \ref{lemma_uniform}, $u$ is also convex and the convergence $u_k\to u$ is uniform on compact subsets of $\Omega$. Then it follows from the stability of Monge-Amp\`ere measures (Lemma \ref{lemma_stability}) that 
$$
\det\D^2u=\lim_{k\to\infty}\det\D^2u_k=g.
$$ 

To show that $u$ provides a solution to \eqref{eqn_proofexistence0}, it only remains to check that
\begin{equation}\label{eqn_proofexistence1}
\liminf_{x\in\Omega,\ x\to x_0}u(x)=0
\end{equation}
for all $x_0\in\pa\Omega$, which is a necessary and sufficient condition for $u$ to be extendable to a continuous convex function on $\overline{\Omega}$ with $u|_{\pa\Omega}=0$ (see Remark \ref{rk_extension}). For $x_0\in F=\pa\Omega\cap L$, \eqref{eqn_proofexistence1} holds because
$$
0\geq \liminf_{x\in\Omega,\ x\to x_0}u(x)\geq \liminf_{x\in\Omega,\ x\to x_0}v(x)=v(x_0)=0.
$$
At $x_0\in \pa\Omega\setminus L$, although $v$ might not vanish, we can still deduce \eqref{eqn_proofexistence1} from the Alexandrov maximum principle. We now present a unified proof of \eqref{eqn_proofexistence1} for all $x_0\in\pa\Omega$.

Given $\epsilon>0$, consider the level sets
$$
E_{k,\epsilon}:=\big\{x\in\Omega\,\big|\,u_k\leq-\epsilon\big\},\quad E_\epsilon:=\big\{x\in\Omega\,\big|\,v\leq-\epsilon\big\}.
$$
Note that $u_k\geq v$ implies $E_{k,\epsilon}\subset E_\epsilon$.
The key to the proof is the fact that the mass of $E_{k,\epsilon}$ under the Monge-Amp\`ere measure of $u_k$ has an upper bound independent of $k$:
\begin{equation}\label{eqn_proofexistence2}
\mathcal{M}u_k(E_{k,\epsilon})\leq \mathcal{M}u_k(E_{\epsilon})\leq\mathcal{M}v(E_\epsilon)<+\infty,
\end{equation}
where the first ``$\leq$'' follows from $E_{k,\epsilon}\subset E_\epsilon$, the second from $\det\D^2 u_k\leq \det \D^2v$, and the last ``$<$'' holds  because $E_\epsilon$ is bounded away from the singular locus $L$ of $\det\D^2v$.

Since $u_k|_{\pa\Omega}=0$, $E_{k,\epsilon}$ is a compact convex set and we have $u_k|_{\pa E_{k,\epsilon}}=-\epsilon$. Therefore, we can apply the Alexandrov maximum principle (Lemma \ref{lemma_AlexandrovMP}) to the function $u_k+\epsilon$ on $E_{k,\epsilon}$. In view of \eqref{eqn_proofexistence2}, this gives
$$
u_k(x)\geq-\epsilon-C\, \dist(x,\pa E_{k,\epsilon})^\frac{1}{n}\ \ \text{ for all }x\in E_{k,\epsilon},
$$
where the constant $C=C(n,\diam(\Omega),\mathcal{M}v(E_\epsilon))>0$  depends on $\epsilon$ but not on $k$. Since $\dist(x,\pa E_{k,\epsilon})< \dist(x,\pa\Omega)$ for $x\in E_{k,\epsilon}$ and $u_k(x)>-\epsilon$ for $x\in \Omega\setminus E_{k,\epsilon}$, it follows that
$$
u_k(x)\geq -\epsilon-C\,\dist(x,\pa\Omega)^\frac{1}{n}\ \ \text{ for all }x\in\Omega.
$$
The same holds for $u(x)$ by letting $k\to\infty$. Therefore, for any $x_0\in\pa\Omega$, we have
$$
0\geq \liminf_{x\in\Omega,\,x\to x_0}u(x)\geq -\epsilon -C\liminf_{x\in\Omega,\,x\to x_0}\dist(x,\pa\Omega)^\frac{1}{n}=-\epsilon.
$$
Since $\epsilon>0$ is arbitrary, this implies \eqref{eqn_proofexistence1} and completes the proof.
\end{proof}

%\section{Open questions and future directions}\label{sec_open}
%
%\subsection*{Finite-slope result for $\det\D^2u=\dist(\,\cdot\,,\pa\Omega)^\alpha$}
%
%\subsection*{Quantitative version}
%
%\subsection*{Generalization under the Jian-Wang condition}
%
%\subsection*{More general equations}

\bibliographystyle{plain} \bibliography{infiniteness}
\end{document}